\newtheorem*{rep@theorem}{\rep@title}
\newcommand{\newreptheorem}[2]{%
\newenvironment{rep#1}[1]{%
 \def\rep@title{#2 \ref{##1}}%
 \begin{rep@theorem}}%
 {\end{rep@theorem}}}
\numberwithin{equation}{section}
\theoremstyle{plain}
\newtheorem{theorem}{Theorem}[section]
\newtheorem{lemma}[theorem]{Lemma}
\newtheorem{conjecture}[theorem]{Conjecture}
\theoremstyle{definition}
\newtheorem{definition}[theorem]{Definition}
\newtheorem{example}[theorem]{Example}
\newtheorem{question}[theorem]{Question}
\newtheorem{remark}[theorem]{Remark}
\crefname{definition}{Definition}{Definitions}
\crefname{theorem}{Theorem}{Theorems}
\crefname{lemma}{Lemma}{Lemmas}
\crefname{example}{Example}{Examples}
\newcommand\NN{\mathbb{N}}
\newcommand\RR{\mathbb{R}}
\title{Parametrizing the Grassmannian using pipe dreams}
\author{Kartik Singh}
\begin{document}
\maketitle
\abstract{Postnikov gave a parametrization for the totally non-negative Grassmannian using the matroid decomposition and associating a network with \reflectbox{L}-diagrams. Talaska and Williams extend this result to the entire Grassmannian by using the Deodhar decomposition instead of the matroid decomposition, and the networks this time are associated with the generalized versions of \reflectbox{L}-diagrams, which are called Go-diagrams. We provide an alternative parametrization for the Deodhar components, this time constructing a network based on the pipe dreams associated with the Go-diagrams. 

This parametrization has several nice properties; for one, it allows us to easily calculate the image of a point under the isomorphism $Gr_{k,n}\simeq Gr_{n-k,n}$. The second feature of this parametrization is that if we write the Pl\"ucker coordinates using the Lindst\"orm-Gessel-Viennot (LGV) lemma in our parametrization, we can associate a pipe dream to each summand, which allows us to reveal additional structure on the summands. Finally, as an application of our parametrization, we describe a case where we can conclude whether one Deodhar component lies inside the closure of another.}

\section{Introduction}
The \textbf{Grassmannian} $Gr_{k,n}$ is the set of $k$-dimensional spaces of $\mathbb{F}^n$ where $\mathbb{F}$ is a field. Algebraically we can see it as a coset $SL_n(\mathbb{F})/P$ where $P$ is a parabolic subgroup of $SL_n(\mathbb{F})$. To each point in the Grassmannian, we can assign Pl\"{u}cker coordinates, which can be used to give the Grassmannian a structure of a projective variety. If our field is $\mathbb{R}$, then the Grassmannian has a very interesting subset called the totally non-negative Grassmannian which is defined as the set of points which have non-negative Pl\"{u}cker coordinates. 

For a point in the Grassmannian, the set of Pl\"{u}cker coordinates which are non-zero form a basis for a matroid. This allows us to define the \textbf{matroid strata} associated with the matroid $\mathcal{M}$ as the collection of points in the Grassmannian, whose set of non-zero Pl\"{u}cker coordinates forms a basis for $\mathcal{M}$. This decomposition has a very non-trivial geometry; however, its intersection with the totally non-negative Grassmannian is much more well-behaved. Not all matroids have a non-empty intersection with the totally non-negative Grassmannian; those that do are referred to as positroids. Postnikov showed that the totally non-negative Grassmannian has a $CW$-complex structure where the cells are given by the positroid components \cite{postnikov2006total}. He further gave a parametrization for each of these positroid cells by assigning a \reflectbox{L}-diagram to each positroid. Using the \reflectbox{L}-diagram, he constructed a planar network whose associated weight matrix corresponds to a point in the positroid cell.

Talaska and Williams extended this result to obtain a parametrization for the entire Grassmannian \cite{talaskawilliams}, where they replaced the positroid cells with Deodhar components, and \reflectbox{L}-diagrams with \textbf{Go-diagrams}, which are generalizations of \reflectbox{L}-diagrams. Deodhar components have a simple topology; each component is homeomorphic to $(\mathbb{F}^*)^a\times \mathbb{F}^b$, and we can attach a Go-diagram to each component. The intersection of these with the non-negative Grassmannian is simple to understand as well; the intersection is non-empty if and only if the Go-diagram corresponding to the component is also a \reflectbox{L}-diagram. In this case, it is the same as the positroid cell corresponding to the \reflectbox{L}-diagram. Talaska and Williams extended Postnikov's result by constructing a network for each Go-diagram (which in this case may no longer be planar), such that the weight matrix of this network corresponds to a point in the Deodhar component. When the Go-diagram is a \reflectbox{L}-diagram, their construction coincides with Postnikov's network for positroid cells.

Deodhar decomposition, even though not as popular as the other decompositions of the Grassmannian, is interesting in its own right. Deodhar first introduced the decomposition in the flag variety to understand the Richardson variety. Kazhdan and Lusztig \cite{Lusztig1979} defined $R$-polynomials to give a recursive formula for the Kazhdan--Lusztig polynomials. $R$-polynomials themselves can be computed by counting the number of points in the Richardson cell in the setting when our field is finite $\mathbb{F}_q$. Deodhar decomposed each Richardson cell as a union of Deodhar components, and since each Deodhar component has a simple topology, this allowed him to compute the $R$-polynomials easily. 

Deodhar decomposition for the Grassmannian was introduced by Kodama and Williams in their work on soliton solutions of the KP equation \cite{kodamawilliams}. It is well known that to each point $A$ in the real Grassmannian, we can construct a solition solution $u_A(x,y,t)$ of the KP equation. Kodama and Williams used Deodhar components to describe the \textit{contour plots} of the solutions at $t\ll0$.

The Deodhar component, although it has a simple topology and appears to have interesting properties, the decomposition itself is poorly behaved compared to other decompositions. In particular, it is not a stratification (i.e., the closure of a component is not a union of other components) \cite{dudas2008note}. This makes the problem of understanding the intersection of the closure of one component with another component highly non-trivial. Marcott \cite{marcott} proposed a conjecture regarding when a component can lie in the closure of another. However, even though his conjecture doesn't hold in general (see \cref{rem:closing}), we were able to show a special case of the conjecture.

Both Go-diagrams and \reflectbox{L}-diagrams have a pipe dream associated with them. In this paper, we will provide an alternative parametrization for the Deodhar components by constructing a network based on these pipe dreams. We will refer to this as the \textbf{restricted path parametrization}. The motivation for introducing these concepts stems from the desire to answer the question: given a Deodhar component with a Go-diagram $D$, which Deodhar components lie on its codimension-one boundary? Since each component is homeomorphic to $(\mathbb{F}^*)^a\times \mathbb{F}^b$, one way to get something in the codimension one is to take a limit of points with one parameter with values in $\mathbb{F}^*$ that is approaching 0. On the level of Go-diagrams, this corresponds to changing filling at a fixed cell or a pair of cells. Therefore, we would like to understand how the parametrization changes when we change the filling at a fixed cell. The changes to the Talaska-Williams network are non-trivial under such changes, and switching to pipe dreams to parametrize these components fixes this problem; each point in the component can be written as a submatrix of a square matrix, which has a product formula that can be easily read off from the pipe dream.

There are several advantages of this construction. We have an isomorphism $Gr_{k,n}\simeq G_{n-k,n}$. If we have a point in $Gr_{k,n}$ along with the parameters corresponding to it, we can write the image of this point under this isomorphism by simply "dualizing" the network on the pipe dreams. This parametrization also gives a new way to look at the Pl\"{u}cker coordinates. Pl\"ucker coordinates can be understood using the Lindstr\"om-Gessel-Viennot (LGV) lemma on the Talaska--Williams network or the pipe dream network. However, the LGV lemma doesn't provide any additional structure to the summands that appear. In our restricted path parametrization, we attach a pipe dream to each of the summands, and then show that these pipe dreams can be obtained by performing a sequence of "toggling" moves on our Go-diagram, revealing additional structure on these summands. Finally, as an application of this parametrization, we describe some of the Deodhar components inside the codimension one boundary of a fixed Deodhar component:
\begin{reptheorem}{thm:closemain}
        Let $D'$ be a Go-diagram in some partition $\lambda$, and let $c\prec c'$ be a crossing and uncrossing pair in $D'$ with $D'(c) = \circ$. Suppose the pipes passing through $c$ are labelled $i$ and $i+1$ for some $1\leq i<n$.
        Let $D$ be the diagram obtained by replacing the stones at $c$ and $c'$ with $+$.
        Then $D$ is a Go-diagram and $\mathcal{D}_D' \subset \overline{\mathcal{D}_D}$.
    \end{reptheorem}

Our paper is structured as follows: \cref{sec:back} discusses the various decompositions, along with combinatorial structures associated with each of these decompositions. \cref{sec:respaths} defines the restricted path parametrization and looks at the properties of this parametrization. Finally, \cref{sec:closure} discusses the application of this parametrization in the problem of understanding the closure of Deodhar components.
\section{Background}\label{sec:back}
\subsection{Partitions, Go-diagrams and \reflectbox{L}-diagrams}
Here, we will introduce the combinatorial objects with which we will be interacting in this article. Let $n,k\in \NN, k\leq  n$. Let $\mathfrak{S}_n$ denote the symmetric group on $n$ symbols. Let $1$ denote the identity element in $\mathfrak{S}_n$, and if $v\in \mathfrak{S}_n$, then let $\ell(v)$ denote the length of $v$. For $w\in \mathfrak{S}_n$, let $\textbf{w} = s_{i_1}s_{i_2}...s_{i_m}$ be a reduced word presentation of $w$. A \textbf{subexpression} of $\textbf{w}$ is one where we replace some of the factors in $\textbf{w}$ with $1$. Let $\textbf{v}$ be a subexpression of $\textbf{w}$. For $j\leq m$, let $v_{(j)}$ denote the product of the first $j$ factors in $\textbf{v}$. Finally let $\pi(\textbf{v})= v_{(m)}$. 
\begin{definition}[Distinguished subexpressions]\cite{deodhar1, deodhar2}
    A subexpression $\textbf{v}$ of $\textbf{w}$ is said to be \textbf{distinguished} if $\ell(v_{(j+1)})\leq \ell(v_{(j)}s_{i_{j+1}})$ for all $j<m$. We use the notation $\textbf{v}\prec \textbf{w}$ to say $\textbf{v}$ is a distinguished subexpression of $\textbf{w}$. A distinguished subexpression $\textbf{v}$ of $\textbf{w}$ is said to be \textbf{positive distinguished expression} if $\ell(v_{j+1})\geq \ell(v_j)$ for all $j$.
\end{definition}
\begin{example}\label{ex:distinguished}
    Let $n=6$, and let $\textbf{w}=s_3s_4s_5s_2s_3s_4s_1s_2s_3$. Let us look at the following subexpressions of $\textup{w}$:
    \begin{align*}
        \textbf{v}^1&=s_31111111s_3\\
        \textbf{v}^2&=s_3111s_3111s_3\\
        \textbf{v}^3&=11111111s_3
    \end{align*}
    Then $\textbf{v}^1$ is a not a distinguished subexpression as $\ell(v^1_{(4)})=1\not\leq \ell (v^1_{(4)}s_3)=0$, $\textbf{v}^2$ is a distinguished subexpression but not a positive distinguished subexpression as $\ell(v^2_{(3)})>\ell(v^2_{(4)})$. Lastly, $\textbf{v}^3$ is a positive distinguished subexpression.
\end{example}
A \textbf{partition} $\lambda= (\lambda_1,\lambda_2,...,\lambda_k)$ bounded by $n,k$ is a sequence of $k$ weakly decreasing non-negative integers with the additional restriction $\lambda_1\leq n-k$. Let $\Lambda_{k,n}$ denote the set of such partitions. For $\lambda\in \Lambda_{k,n}$, define $|\lambda|:= \lambda_1+ \lambda_2+...+\lambda_k$ The Young diagram corresponding to $\lambda\in \Lambda_{k,n}$ lies inside a $k\times (n-k)$ rectangle ($k$ is the number of rows). We will use English notation to draw our Young diagrams. We shall use $\lambda$ also to denote its Young diagram. We can order the cells in $\lambda$ to form a poset. For two cells $b,c$ in $\lambda$, we say $b\prec c$ if $c$ is weakly north-west of $b$. We also use the notation $c_{in}$ to denote the set of all cells $b$ with $b\prec c$.
         
Let $\binom{[n]}{k}$ denote the subsets of $[n]$ with $k$ elements. The set $\Lambda_{k,n}$ in bijection with $\binom{[n]}{k}$. For a given $\lambda$, label the south-east border of the Young diagram with integers from $1$ to $n$ starting from the eastmost edge. The labels on the vertical edges give the subset $I_\lambda$. For partitions $\lambda, \mu$, we say $\lambda \leq \mu$ if the Young diagram of $\lambda$ is contained entirely inside the one for $\mu$. We use this to define an order on $\binom{[n]}{k}$ as: $\lambda \leq \mu \iff I_\lambda \leq I_\mu$. We also have a bijection between $\binom{[n]}{k}$ and the set of Grassmannian permutations with descent at position $k$; $I_\lambda$ is simply the set of entries which show up in the permutation after the descent. Let $w_\lambda$ be the Grassmannian permutation corresponding to $\lambda$. See \cref{ex:pipedreams}.

A \textbf{pipe dream} is a filling of $\lambda$ using elbow pipes \includesvg[width=5mm, height=5mm]{elbow} and crossings \includesvg[width=5mm, height=5mm]{cross}. The numbering of the Young diagram on the south-east border indexes the pipes. We follow the pipes to the north-west boundary, and we label the north-west border with pipes exiting at that edge. Reading the numbers from right to left gives us the permutation corresponding to the pipe dream. For a pipedream $P$, we use $\pi(P)$ to denote this permutation. See \cref{ex:pipedreams}.

Each pipe dream corresponds to a subexpression of $w_\lambda$ corresponding to a reading word, which we shall describe here. We first attach a simple transposition to every cell in the Young diagram of $\lambda$. We start by attaching $s_{n-k}$ to the top left cell of $\lambda$. Now we attach transpositions recursively: if the cell $b$ has transposition $s_i$, then we attach transpositions $s_{i-1}$ and $s_{i+1}$ to the cells right and bottom of $b$ respectively.  

Let $|\lambda| = m$. A \textbf{reading word} on $\lambda$ is a filling of $\lambda$ with numbers from $1$ to $m$, such that entries are decreasing from left to right and top to bottom. Note that a reduced word presentation $\textbf{w}_\lambda$ for $w_\lambda$ is obtained if we fill all the cells with crossings. Now, to get the subexpression corresponding to the pipe dream, we put $1$ on the entries corresponding to elbow pipes in the reading word, and the attached transposition on the entries corresponding to crossings in the reading word. See \cref{ex:pipedreams} 
\begin{figure}
            \centering
            \includesvg[width =60mm, height=30mm]{readingword}
            \caption{Attaching transpositions and an example for a reading word}
            \label{fig:readingword}
        \end{figure}
        \begin{figure}
            \centering
            \includesvg[width =90mm, height=30mm]{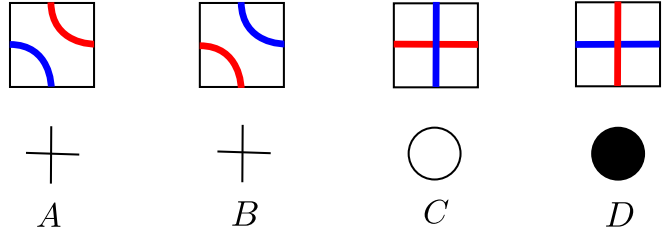}
            \caption{Possible cases at a cell. The lower index pipe is coloured in red.}
            \label{fig:configs}
        \end{figure}
        
        \begin{figure}
            \centering
            \includesvg[width =90mm, height=30mm]{pipedream2}
            \caption{Example of a pipe dream which is not a Go-diagram.}
            \label{fig:pipedreamex2}
        \end{figure}
        \begin{figure}
            \centering
            \includesvg[width =90mm, height=30mm]{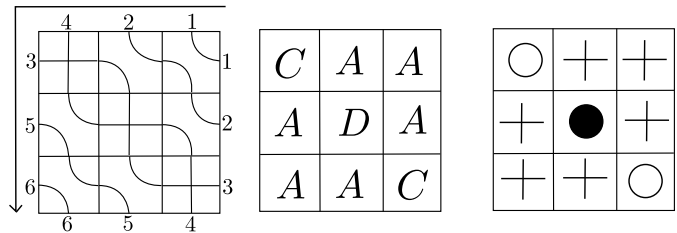}
            \caption{Example of a pipe dream, which is a Go-diagram.}
            \label{fig:pipedreamex1}
        \end{figure}

At any cell, we have two pipes entering it; we shall compare the indices of the two pipes. We have four possible cases, which are illustrated in the \cref{fig:configs}; here, the lower index pipe is coloured red. In a pipe dream with all elbow pipes, all the cells are in configuration $A$, so configuration $A$ is the default configuration for the pipes. If a cell $c$ with a crossing tile and the pipes are in configuration $C$, we say $c$ is the point of \textbf{crossing} for the two pipes going through it; if the pipes are in configuration $D$, then $c$ is the point of \textbf{uncrossing}. If $c$ and $c'$ are points of crossing and uncrossing for two pipes, with the pipes not crossing each other at cells $c\prec b\prec c'$, then we say that $c$ and $c'$ form a \textbf{crossing-uncrossing pair}. The pipes are said to \textbf{inverted} between $c$ and $c'$. Finally, the cells with configuration $B$ are the ones where the filling in the cell is an elbow pipe, but the pipes entering the cell are inverted.
\begin{definition}
    A pipe dream is a \textbf{Go-diagram} if it does not have configuration $B$. A pipe dream is said to be \textbf{reduced} if configuration $D$ never appears. Finally, a \textbf{\reflectbox{L}-diagram} is a reduced Go-diagram. We shall now obtain a $(+,\circ,\bullet)$ filling of $\lambda$ by replacing all cells with configuration $A$ or $B$ with $+$, $C$ with $\circ$, and $D$ with $\bullet$. In this diagram, $\circ$ denotes the points of crossing, i.e., the length of the permutation increases here, while $\bullet$ denotes the points of uncrossing, i.e, the length of the permutation decreases here.
\end{definition}

\begin{remark}
    The definition in the literature for Go-diagram and \reflectbox{L}-diagram is given using the subexpression $\textbf{v}$ of $\textbf{w}_\lambda$ obtained using the reading order on $\lambda$. $D$ is said to be a Go-diagram (resp. \reflectbox{L}-diagram) if $\textbf{v}$ is a distinguished subexpression (resp. positive distinguished subexpression). It can be shown that this definition is independent of the choice of the reading word, and we have defined Go-diagrams and \reflectbox{L}-diagrams in a way that emphasizes this point.
\end{remark}
\begin{example}\label{ex:pipedreams}
    Let $n =6, k=3, \lambda  = (3,3,3)$. In this case $I_\lambda = \{1,2,3\}, w_\lambda = 456123$. Using the reading word in \cref{fig:readingword}, we get the reduced word for $w_\lambda$, as $\textbf{w}_\lambda = s_3s_4s_5s_2s_3s_4s_1s_2s_3$. The permutations corresponding to pipedreams in  \cref{fig:pipedreamex2} and \cref{fig:pipedreamex1} are $v^1 = 123456$ and $v^2 = 124356$  respectively.
    The subexpression corresponding to the pipedream in \cref{fig:pipedreamex2} is $\textbf{v}^1=s_31111111s_3$ and the subexpression for pipedream in \cref{fig:pipedreamex1} is $\textbf{v}^2=s_3111s_3111s_3$,. From \cref{ex:distinguished}, we know that $\textbf{v}^1$ is not a distinguished subexpression, and hence the pipedream in \cref{fig:pipedreamex2} is not a Go-diagram, which can also be seen through the presence of a cell with configuration $B$. By similar reasoning, the pipedream in \cref{fig:pipedreamex1} is a Go-diagram.
\end{example}
\subsection{Grassmannian and Schubert decomposition}        

Let $\mathbb{F}$ denote the field (which will be either $\RR$ or $\mathbb{C}$), and $\mathbb{F}^* = \mathbb{F}\setminus\{0\}$. The \textbf{Grassmannian} $Gr_{k,n}$ is defined as the set of all $k$-dimensional subspaces of $\mathbb{F}^n$. Let $e_1,e_2,...,e_n$ be the standard basis for $\mathbb{F}^n$. Given $V\in Gr_{k,n}$, let $v_1,...,v_k$ be the basis vectors of $V$, and let $v_i=\sum_{j=1}^n (A_V)_{i,j}e_j$. Here, we say the matrix $A_V$ \textbf{realizes} $V$. The set of all $k\times n$ matrices that realize $V$ are row equivalent to each other, so we can also think of the $Gr_{k,n}$ as the set of all full rank $k\times n$ matrices quotiented by the row equivalence relation.

For $I\in \binom{[n]}{k}$, let $\Delta_I(A_V)$ be the maximal minor of $A_V$ with columns given by $I$. These will be referred to as the \textbf{Pl\"{u}cker coordinates} of $V$. Since scaling any row of $A_V$ also gives a matrix realizing $V$, we have an embedding $Gr_{k,n}\to \mathbb{PF}^{\binom{n}{k}-1}$. The Pl\"{u}cker coordinates also satisfy homogeneous polynomial equations called the \textbf{Pl\"{u}cker relations}, giving $Gr_{k,n}$ the structure of a projective variety. If the field is $\mathbb{R}$, the \textbf{totally non-negative Grassmannian} $Gr^{tnn}_{k,n}$ is the set of subspaces $V$ such that all the Pl\"{u}cker coordinates $\Delta_I(A_V)$ have the same sign for any realization $A_V$ of $V$ (i.e. there is a realization of $V$ such that all the Pl\"{u}cker coordinates of $V$ in that realization are all non-negative). If $\Delta_I(A_V)\not=0$ for some realization $A_V$ of $V$, then it is non zero for all realizations $A_V$. So in the statements where the actual value of the coordinate is not relevant, and the only thing being considered is whether it is zero or not, we drop the dependence on the realization, and use $\Delta_I(V)$ to refer to the Pl\"ucker coordinate.

We now look at the simplest decomposition of the Grassmannian: the \textbf{Schubert decomposition}. Let $\lambda$ be a partition and let $I_\lambda$ be the $ k$-element subset corresponding to it. The \textbf{Schubert component} $\Omega_\lambda$ is defined below:
\begin{equation*}
    \Omega_\lambda =\{V\in Gr_{k,n}|I_\lambda \texttt{ is the maximal set such that }\Delta_{I_\lambda}(V)\not=0\}.
\end{equation*}
There is a simple way to see the above decomposition using \textbf{Gaussian elimination}. Since all the matrices realizing $V$ are row equivalent, we have one matrix in the equivalence class that is in row-reduced echelon form. Then $\Omega_\lambda$ is the set of all $V$'s such that the matrix in row reduced echelon form realizing $V$ has pivot columns given by $I_\lambda$. See \cref{ex:schubertex}
\begin{example}\label{ex:schubertex}
    Let $n=6,k=3, I= \{1,3,6\}\in \binom{[6]}{3}$. Then any point in the Schubert cell corresponding to $I$ has the following row-reduced echelon form:
    \begin{equation*}
        A_V=\left[\begin{matrix}
            1&*&0&*&*&0\\
            0&0&1&*&*&0\\
            0&0&0&0&0&1
        \end{matrix}\right]
    \end{equation*}
    where the $*$ entries can take any value in $\mathbb{F}$. Note that the pivot columns of $A_V$ are given by $I$.
\end{example}
Now, if we remove the columns containing 1, and reflect what we have with respect to the vertical axis, the $*$'s form the Young diagram of $\lambda$. Each of these $*$'s could have any value in $\mathbb{F}$, so we have $\Omega_\lambda \simeq \mathbb{F}^{|\lambda|}$. Therefore, we get the following decomposition:
\begin{equation*}
    Gr_{k,n}= \bigcup_{\lambda\in\Lambda_{k,n}}\Omega_\lambda\simeq\bigcup_{\lambda\in\Lambda_{k,n}}\mathbb{F^{|\lambda|}}.
\end{equation*}
These components, along with having a simple topology, also form a stratification, that is:
\begin{equation*}
    \overline{\Omega_\lambda}= \bigcup_{\mu\leq \lambda}\Omega_\mu .
\end{equation*}
Even though this is the coarsest decomposition of $Gr_{k,n}$ that we define here, it is the finest decomposition we know that has both the properties: each component is an affine space, and the components form a stratification.

\subsection{Matroid decomposition}
\begin{definition}
    A \textbf{matroid} $\mathcal{M}$ of \textbf{rank} $k$ is a non-empty subset of $\binom{[n]}{k}$, whose elements which are called \textbf{bases} satisfy the following \textbf{exchange condition}:
    For $I,J\in \mathcal{M},  i\in I\setminus J$ there exists $j\in J$ such that $(I\setminus \{i\})\cup \{j\}\in \mathcal{M}$.
\end{definition}
Given an element $V\in Gr_{k,n}$, let $\mathcal{M}_V\subset \binom{[n]}{k}$ be the set of non-zero Pl\"{u}cker coordinates of $V$. Using Pl\"{u}cker relations, it can be shown that $\mathcal{M}_V$ is a matroid.
\begin{definition}[Matroid stratum]
    Let $\mathcal{M}$ be a matroid of rank $k$. The matroid stratum $\mathcal{S_M}$ of $\mathcal{M}$ is defined as:
    \begin{equation*}
        \mathcal{S}_\mathcal{M}=\{V\in Gr_{k,n}|\mathcal{M}_V=\mathcal{M}\}.
    \end{equation*}
    This gives a stratification of $Gr_{k,n}$, referred to as the \textbf{matroid stratification}, also known as the Gelfand-Serganova stratification.
\end{definition}
The matroids with non-empty matroid stratum are called \textbf{realizable matroids}. The geometry of these strata is highly non-trivial; in fact, they can be as complicated as any algebraic variety \cite{Mnev1988}. The intersection of the matroid decomposition with $Gr_{k,n}^{tnn}$ is more structured; here, it gives $Gr_{k,n}^{tnn}$ a structure of CW-complex \cite{postnikov2006total}. Therefore, the matroids which have a non-empty intersection with $Gr_{k,n}^{tnn}$ are special, and are referred to as \textbf{positroids}.

\subsection{Positroid decomposition}
A \textbf{complete flag} on $\mathbb{F}^n$ is a sequence of subspaces $(V_0,V_1,...,V_n)$ such that 
    \begin{equation*}
        \{0\}=V_0\subset V_1 \subset V_2\subset... V_n = \mathbb{F}^n.
    \end{equation*}
    The \textbf{flag variety} $Fl_n$ is the set of complete flags on $\mathbb{F}^n$. Let $e_1,e_2,..., e_n$ be the standard basis for $\mathbb{F}^n$. The standard flag is $F=(F_0,F_1,...,F_n)$ where $F_i$ the subspace generated by $\{e_1,...,e_i\}$ for all $i\leq n$. We also have a projection $\pi_k:Fl_n\to Gr_{k,n}$, which is simply given by $\pi_k(V_0,V_1,...,V_n)= V_k$.

    Let $G = SL_n(\mathbb{F})$. Let $B^+, B^-, T \subset G$ be the set of upper triangular, lower triangular, and diagonal matrices, respectively. We have homomorphisms $\phi_i:SL_2(\mathbb{F})\to SL_n(\mathbb{F})$  for all $i\leq n-1$:
    \begin{equation*}
        \phi_i\left( \begin{matrix}
            a&b\\
            c&d
        \end{matrix}\right)
        =
        \left( \begin{matrix}
            1& & & & &\\
            &\ddots& & & &\\
            & & a& b& &\\
            & & c& d& &\\
            & & & &\ddots&\\
            & & & & & 1
        \end{matrix}\right)
    \end{equation*}
    where $a$ is the $(i+1)^{th}$ diagonal entry from the bottom.
    The Weyl group in this setting is $W = \mathfrak{S}_n = N_G(T)/T$, where $N_G(T)$ is the normalizer of $T$ in $G$. Let $\Dot{s}_i = \phi_i\left(\begin{matrix}0&-1\\1&0\end{matrix}\right)$. The simple reflections for $W$ are given by $s_i = \Dot{s}_iT$. If $w\in W$ has a reduced word presentation $w = s_{i_1}s_{i_2}...s_{i_m}$, then define $\Dot{w}:= \Dot{s}_{i_1}\Dot{s}_{i_2}...\Dot{s}_{i_m}$. 

    We have $G$-action on $Fl_n$. If $g\in G$, and $(V_0, V_1,...,V_n)\in Fl_n$ then we define $g(V_0,V_1,...,V_n) := (g(V_0),g(V_1),...,g(V_n))$. This action is transitive; moreover, $B^+$ fixes the standard flag. Therefore, we have $Fl_n\simeq G/B^+$. This gives us the following opposite \textbf{Bruhat decompositions} for the flag manifold:
    \begin{equation*}
        G/B^+ = \bigcup_{w\in\mathfrak{S}_n}B^+\Dot{w} B^+ = \bigcup_{v\in\mathfrak{S}_n}B^-\Dot{v} B^+.
    \end{equation*}

    For $v,w\in W$, we define the \textbf{Richardson cell} as $\mathcal{R}_{v,w} = B^+\Dot{w}B^+ \cap B^-\Dot{v}B^+$. This is empty unless we have $v\leq w$ in the Bruhat ordering. Now we are in the position to define the positroid decomposition of the Grassmannian. Let $W^k$ be the set of all Grassmannian permutations with at most one descent, which is at position $k$. Let $w\in W^k$. We define the \textbf{positroid cell} as $\mathcal{P}_{v,w} = \pi_k(\mathcal{R}_{v,w})$. We have the following decomposition of the Grassmannian:
    \begin{equation*}
        Gr_{k,n}(\mathbb{F}) = \bigcup_{w\in W^k}\bigcup_{v\leq w}\mathcal{P}_{v,w}.
    \end{equation*}
    \begin{remark}
        In the literature, positroid cells are also defined as the intersection of cyclic Schubert cells; however, this definition is equivalent to the one we gave above.
    \end{remark}
    \begin{remark}
        The decomposition is called the positroid decomposition because in the case $\mathbb{F=\mathbb{R}}$, when we intersect this with $Gr_{k,n}^{tnn}$, the cells intersect in their full dimension, and we recover the CW complex formed by the matroid decomposition in $Gr_{k,n}^{tnn}$. Furthermore, the positroid cells in $Gr_{k,n}$ are the Zariski closure of the positroid cells in $Gr_{k,n}^{tnn}$ \cite{Knutson_Lam_Speyer_2013}.
    \end{remark}
The Richardson strata $\mathcal{R}_{v,w}$ or their projections $\mathcal{P}_{v,w}$ are stratifications; however, they are no longer a simple product of affine spaces like the Schubert components.

\subsection{Deodhar decomposition}
This decomposition was introduced by Deodhar for the flag variety \cite{deodhar1,deodhar2}. Deodhar's original motivation was to calculate the $R$-polynomials introduced by Kazhdan and
Lusztig \cite{Lusztig1979}. In the setting we are working with a finite field $\mathbb{F}_q$, the number of points inside the Richardson cell $\mathcal{R}_{v,w}$ is given by the $R$-polynomial. Since the Richardson cells themselves are not affine spaces, counting the points is a non-trivial task. Deodhar decomposed each Richardson cell $\mathcal{R}_{v,w}$ into various components $\mathcal{R}_{\textbf{v},\textbf{w}}$ indexed by distinguished subexpressions $\textbf{v}\prec \textbf{w}$, such that each $\mathcal{R}_{\textbf{v},\textbf{w}}\simeq \mathbb{F}^a\times (\mathbb{F}^*)^b$.
\begin{definition}[\cite{deodhar1, deodhar2}]
Let $\textbf{w} = s_{i_1}s_{i_2}...s_{i_m}$ be a reduced word representation of $w$, and let $\textbf{v}$ be the subexpression of $\textbf{w}$. We define the following sets:
\begin{align*}
    J_\textbf{v}^\circ&= \{k\in\{1,...,m\}| v_{(k-1)}<v_{(k)}\}\\
    J_\textbf{v}^\Box&= \{k\in\{1,...,m\}| v_{(k-1)}=v_{(k)}\}\\
    J_\textbf{v}^\bullet&= \{k\in\{1,...,m\}| v_{(k-1)}>v_{(k)}\}.
\end{align*}
\end{definition}
For all $i\leq n-1, p\in \mathbb{F}$, define $x_i(p) = \phi_i\left(\begin{matrix}
    1&p\\0&1
\end{matrix}\right)$ and $y_i(p) =\phi_i\left(\begin{matrix}
    1&0\\ p&1
\end{matrix}\right)$. For a distinguished subexpression $\textbf{v}\prec \textbf{w}$, define the following subset of $G$
\cite{marsh2004parametrizations}:
\begin{equation*}
    G_{\textbf{v},\textbf{w}}:=\left\{\begin{array}{c|l l c}
         & g_l= x_{i_l}(q_l)\Dot{s}_{i_l}^{-1} & \texttt{if }l\in J_\textbf{v}^\bullet& \\
         g= g_1g_2...g_m&g_l = y_{i_l}(p_l) & \texttt{if }l\in J_\textbf{v}^\Box& \texttt{for }p_l\in \mathbb{F}^*, q_l\in \mathbb{F}\\
         & g_l = \Dot{s}_{i_l}&\texttt{if }l\in J_\textbf{v}^\circ&\\
    \end{array}\right\}.
\end{equation*}
Note that $G_{\textbf{v},\textbf{w}}\cong \mathbb{F}^{|J_\textbf{v}^\bullet|}\times (\mathbb{F}^*)^{|J_\textbf{v}^\Box|}$. Marsh and Rietsch proved the following theorem, which gives the parametrization of the Deodhar component. We shall use this theorem to define the Deodhar component.
\begin{theorem}[\cite{marsh2004parametrizations}]
    Let $ \textup{\textbf{v}}$ be a distinguished subexpression of $\textup{\textbf{w}}$. Then,  
    $\mathcal{R}_{\textup{\textbf{v}},\textup{\textbf{w}}} = G_{\textup{\textbf{v}},\textup{\textbf{w}}}B^+$.
\end{theorem}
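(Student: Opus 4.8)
The plan is to prove the equality by induction on the length $m=\ell(\textbf{w})$, stripping off the final letter $s_{i_m}$ of the reduced word and exploiting the fibration $G/B^+\to G/P_{i_m}$, where $P_{i_m}=B^+\cup B^+\dot{s}_{i_m}B^+$ is the minimal parabolic and the fibers are copies of $\mathbb{P}^1$. Writing $\textbf{w}=\textbf{w}'s_{i_m}$ and $\textbf{v}=\textbf{v}'\,\epsilon$ with $\epsilon\in\{1,s_{i_m}\}$ the last factor, the inductive hypothesis supplies $\mathcal{R}_{\textbf{v}',\textbf{w}'}=G_{\textbf{v}',\textbf{w}'}B^+$ for the shorter word. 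Since every prescribed last factor $g_m$ lies in $P_{i_m}$, the point $g'g_mB^+$ sits in the same $\mathbb{P}^1$-fiber as $g'B^+$, so the inductive step amounts to understanding which locus inside each fiber the factor $g_m$ selects. The base case $m=0$ is immediate, as then $v=w=1$ and both sides equal $\{B^+\}$.

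I would first dispatch the right-hand Bruhat cell, showing $G_{\textbf{v},\textbf{w}}B^+\subseteq B^+\dot{w}B^+$ for $w=\pi(\textbf{w})$. The crucial $SL_2$-level identity is $y_i(p)=x_i(p^{-1})\,\dot{s}_i\,t\,x_i(p^{-1})$ with $t\in T$ for $p\in\mathbb{F}^*$, which places $y_i(p)\in B^+\dot{s}_iB^+$; combined with $\dot{s}_i^{-1}\in\dot{s}_iT$ (so that $x_i(q)\dot{s}_i^{-1}\in B^+\dot{s}_iB^+$) and the trivial $\dot{s}_i\in B^+\dot{s}_iB^+$, every last factor satisfies $g_m\in B^+\dot{s}_{i_m}B^+$. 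Because $\ell(w)=\ell(w')+1$, the Bruhat multiplication rule $B^+\dot{w}'B^+\cdot B^+\dot{s}_{i_m}B^+=B^+\dot{w}B^+$ then closes the induction on the right.

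The delicate part, and the step I expect to be the main obstacle, is the left cell membership $G_{\textbf{v},\textbf{w}}B^+\subseteq B^-\dot{v}B^+$ with $v=\pi(\textbf{v})$, together with the exactness (disjointness and exhaustion) of the resulting decomposition of $\mathcal{R}_{v,w}$. Here I would analyze how right multiplication by $P_{i_m}$ moves a point of $B^-\dot{v}'B^+$ along its $\mathbb{P}^1$-fiber, distinguishing the cases $\ell(v's_{i_m})>\ell(v')$ and $\ell(v's_{i_m})<\ell(v')$: in each case the fiber meets the $B^-$-strata in a controlled way. The three prescriptions $\dot{s}_{i_m}$, $y_{i_m}(p_m)$, and $x_{i_m}(q_m)\dot{s}_{i_m}^{-1}$ are calibrated exactly so that the distinguished condition $\ell(v_{(j+1)})\le\ell(v_{(j)}s_{i_{j+1}})$ routes each point into the stratum $B^-\dot{v}B^+$ for the correct $v=\pi(\textbf{v})$; verifying that no other factor type can occur distinguishedly and that these pieces tile $\mathcal{R}_{v,w}$ is precisely where the distinguished hypothesis carries the argument.

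Finally I would prove that $g\mapsto gB^+$ is injective on $G_{\textbf{v},\textbf{w}}$ and hence an isomorphism onto $\mathcal{R}_{\textbf{v},\textbf{w}}$, recovering the parameters. Reading the factorization $g=g_1\cdots g_m$ through the successive $\mathbb{P}^1$-fiber coordinates of the tower of projections obtained by peeling letters one at a time, each $p_l$ (for $l\in J_{\textbf{v}}^\Box$) and each $q_l$ (for $l\in J_{\textbf{v}}^\bullet$) is determined by the fiber position at that stage, while the $\circ$-factors are rigid. This yields the stated identification $G_{\textbf{v},\textbf{w}}\cong\mathbb{F}^{|J_{\textbf{v}}^\bullet|}\times(\mathbb{F}^*)^{|J_{\textbf{v}}^\Box|}$ and completes the proof.
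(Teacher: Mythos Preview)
The paper does not prove this theorem. It is quoted from Marsh--Rietsch \cite{marsh2004parametrizations} and used as the \emph{definition} of the Deodhar component $\mathcal{R}_{\textbf{v},\textbf{w}}$; no argument is supplied or sketched. So there is no ``paper's own proof'' against which to compare your proposal.

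That said, your outline is a faithful sketch of the Marsh--Rietsch strategy: induction on $\ell(\textbf{w})$, peeling off the last letter via the $\mathbb{P}^1$-fibration $G/B^+\to G/P_{i_m}$, and the $SL_2$ identity placing each type of factor $g_m$ into $B^+\dot{s}_{i_m}B^+$. Your own flagging of the ``delicate part'' is accurate: the real content is the case analysis showing that, for $g'B^+\in B^-\dot{v}'B^+$, the three factor types $\dot{s}_{i_m}$, $y_{i_m}(p)$, $x_{i_m}(q)\dot{s}_{i_m}^{-1}$ land $g'g_mB^+$ in exactly the correct $B^-$-stratum, and that this partitions the fiber. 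You have not actually carried this out---phrases like ``I would analyze'' and ``is calibrated exactly so that'' are placeholders for computations that require writing $g'=b^-\dot{v}'b^+$, tracking how $b^+$ interacts with $g_m$ (it does not simply absorb, since $b^+g_m\notin g_mB^+$ in general), and invoking the distinguished condition to rule out the forbidden case. As written this is a correct plan, not a proof.
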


Deodhar showed the following theorem:
\begin{theorem}[\cite{deodhar1, deodhar2}]
    Let $v,w\in \mathfrak{S}_n$ be two permutations such that $v\leq w$. Then the Richardson cell $\mathcal{R}_{v,w}$ is a disjoint union of Deodhar components: 
    \begin{equation*}
        \mathcal{R}_{v,w} = \bigcup_{\substack{\textup{\textbf{v}}\prec\textup{\textbf{w}}\\\ \pi(\textup{\textbf{v}})=v}}\mathcal{R}_{\textup{\textbf{v}},\textup{\textbf{w}}}.
    \end{equation*}        
\end{theorem}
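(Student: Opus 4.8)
The plan is to induct on the length $m=\ell(w)$, peeling off the last letter $s:=s_{i_m}$ of the reduced word $\textbf{w}=s_{i_1}\cdots s_{i_m}$ and writing $\textbf{w}'=s_{i_1}\cdots s_{i_{m-1}}$, so that $w'=ws$ with $\ell(w')=m-1$. The base case $m=0$ is immediate: $\mathcal{R}_{1,1}=\{B^+\}$, and the only subexpression of the empty word is the empty one, with $\pi=1=v$. For the inductive step I would introduce the minimal parabolic $P_s=B^+\sqcup B^+\dot s B^+$ and the associated $\mathbb{P}^1$-fibration $\pi_s\colon G/B^+\to G/P_s$, and analyze how $\mathcal{R}_{v,w}$ meets each fiber.

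First I would settle the easy inclusion $\mathcal{R}_{\textbf{v},\textbf{w}}\subseteq\mathcal{R}_{v,w}$ for $\pi(\textbf{v})=v$. By the Marsh--Rietsch theorem $\mathcal{R}_{\textbf{v},\textbf{w}}=G_{\textbf{v},\textbf{w}}B^+$, so it suffices to check $G_{\textbf{v},\textbf{w}}B^+\subseteq B^+\dot w B^+\cap B^-\dot v B^+$. The containment in $B^+\dot w B^+$ is a direct $SL_2$ computation: each of the three factor types $\dot s_{i_l}$, $y_{i_l}(p_l)$ with $p_l\in\mathbb{F}^*$, and $x_{i_l}(q_l)\dot s_{i_l}^{-1}$ lies in $B^+\dot s_{i_l}B^+$, and since $\textbf{w}$ is reduced the product of these cells is exactly $B^+\dot w B^+$. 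The containment in $B^-\dot v B^+$ I would prove by induction via the factorization $g=g'g_m$ with $g'\in G_{\textbf{v}',\textbf{w}'}$, using the case analysis on the last factor described next.

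The heart of the argument is the fiberwise analysis of $\pi_s$ restricted to $\mathcal{R}_{v,w}$, which simultaneously yields exhaustion and disjointness. Given $gB^+\in\mathcal{R}_{v,w}$, its image lands in the parabolic Richardson cell indexed by $\bar v=\min(v,vs)$ and $w'$, and the fiber $\pi_s^{-1}(\mathrm{pt})\cong\mathbb{P}^1$ meets $\mathcal{R}_{v,w}$ in a locus governed by the local $SL_2$ picture. I would split into two cases. When $vs<v$, the fiber meets $\mathcal{R}_{v,w}$ in a single point and $\pi_s$ restricts to an isomorphism onto the base cell; the last letter is forced to be of type $\circ$ (factor $\dot s$, no new parameter), and appending it to the components obtained inductively for $\textbf{w}'$ with $\pi=vs$ reproduces those for $\textbf{w}$. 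When $vs>v$, the fiber splits into a $\Box$-stratum ($\cong\mathbb{F}^*$, remaining in $B^-\dot v B^+$, factor $y_{i_m}(p_m)$) and a $\bullet$-stratum ($\cong\mathbb{F}$, landing in $B^-\dot{vs}B^+$, factor $x_{i_m}(q_m)\dot s_{i_m}^{-1}$); these correspond respectively to the subexpressions $\textbf{v}'$ with $\pi(\textbf{v}')=v$ (append $\Box$) and with $\pi(\textbf{v}')=vs$ (append $\bullet$).

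Finally I would observe that distinguishedness is exactly the combinatorial shadow of this geometry: the condition $\ell(v_{(m)})\le\ell(v_{(m-1)}s)$ rules out precisely the $\Box$ option when $vs<v$, which is also the case where the fiber degenerates to a point, so the surviving subexpressions are in bijection with the nonempty fiber strata. Tracking the factorization through the induction shows that each $gB^+$ factors uniquely, giving disjointness, and that every point of $\mathcal{R}_{v,w}$ arises, giving exhaustion; the identification $G_{\textbf{v},\textbf{w}}\cong\mathbb{F}^{|J_\textbf{v}^\bullet|}\times(\mathbb{F}^*)^{|J_\textbf{v}^\Box|}$ then records the topology of each component. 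I expect the main obstacle to be the $vs>v$ fiber analysis: rigorously exhibiting the $\mathbb{P}^1$ fiber as the disjoint union of the $\Box$- and $\bullet$-strata, identifying each with the corresponding lower Richardson cell pulled back from $G/P_s$, and checking that the two inductive hypotheses (on $\mathcal{R}_{v,w'}$ and $\mathcal{R}_{vs,w'}$) glue compatibly along the shared base cell.
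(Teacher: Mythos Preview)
The paper does not give its own proof of this statement: it is quoted as a theorem of Deodhar (with the component description via $G_{\textbf{v},\textbf{w}}$ attributed to Marsh--Rietsch just above), and the paper simply cites \cite{deodhar1,deodhar2} and moves on. So there is nothing in the paper to compare against.

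That said, your outline is essentially the standard inductive argument found in those references: peel off the last simple reflection, analyze the resulting $\mathbb{P}^1$-bundle over the minimal parabolic, and split the fiber according to whether $vs<v$ or $vs>v$, matching the three factor types $\dot s_{i_m}$, $y_{i_m}(p_m)$, $x_{i_m}(q_m)\dot s_{i_m}^{-1}$ to the labels $\circ,\Box,\bullet$. Your identification of the distinguishedness condition as exactly what forbids the $\Box$ choice when $vs<v$ is the right combinatorial punchline. The one place to be careful, as you yourself flag, is the $vs>v$ case: you need that for $g'B^+\in B^-\dot vB^+$ with $vs>v$, the coset $g'B^+$ has a representative with trivial $i_m$-th $SL_2$ block (equivalently, that $B^-\dot vB^+\cap B^+\dot{w'}B^+$ projects isomorphically under $\pi_s$), so that the fiber decomposition into $\mathbb{F}^*\sqcup\mathbb{F}$ really identifies with $\mathcal{R}_{v,w'}\times\mathbb{F}^*$ and $\mathcal{R}_{vs,w'}\times\mathbb{F}$ respectively. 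This is standard but does require an explicit normal-form lemma (e.g.\ Marsh--Rietsch, Lemma~3.5 or the analogous statement in Deodhar) rather than just the abstract $\mathbb{P}^1$-fibration picture.
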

The Deodhar decomposition for the Grassmannian is obtained similarly to the positroid decomposition: we project the Deodhar decomposition in the flag variety. Letting $w\in W^k$, the Deodhar component in the Grassmannian is given by $\mathcal{D}_{\textbf{v,w}}= \pi_k(\mathcal{R}_{\textbf{v,w}})$. We now have the following decomposition of the Grassmannian due to \cite{kodamawilliams}:
\begin{equation*}
    Gr_{k,n} = \bigcup_{w\in W^k}\left(\bigcup_{\textbf{v}\prec\textbf{w}} \mathcal{D}_{\textbf{v},\textbf{w}}\right).
\end{equation*}
    
\begin{remark}
    The Deodhar decomposition is not a stratification, as the closure of a component is not necessarily a union of components \cite{dudas2008note}.
\end{remark}
The Deodhar decomposition for the flag variety depends on the reduced word $\textbf{w}$; however, in the Grassmannian, the decomposition is only dependent on the permutation $w$ \cite{kodamawilliams}. When we convert a distinguished subexpression into a Go-diagram, the different choices of reduced words for $w$ correspond to different reading words on the Go-diagram. Since characterization of Go-diagrams is not dependent on the choice of reading words, instead of using distinguished subexpressions to index the Deodhar components in $Gr_{k,n}$, we will use Go-diagrams. For a Go-diagram $D$, let $\mathcal{D}_D$ denote the corresponding Deodhar component.

\subsection{Talaska--Williams parametrization of the Grassmannian}
Postnikov gave a parametrization for $Gr_{k,n}^{tnn}$ using the matroid decomposition \cite{postnikov2006total}. To each positroid, he associated a \reflectbox{L}-diagram, and using this \reflectbox{L}-diagram, he constructed a planar network which parametrizes the positroid cell in $Gr_{k,n}^{tnn}$. Talaska--Williams extended this to the entire $Gr_{k,n}$ by replacing the matroid decomposition with the Deodhar decomposition of the Grassmannian, and $\reflectbox{L}$-diagram with Go-diagrams. In this case, the networks they obtain are no longer necessarily planar. 

\begin{definition}
Let $D$ be a Go-diagram inside the partition $\lambda$. We describe network $M_D$ as follows:
\begin{itemize}
    \item We attach an internal vertex to each cell of $\lambda$ which has a $+$ or $\bullet$ in $D$.
    \item We number the south east border of $\lambda$ with numbers from $1$ to $n$ starting from the east most edge. We attach a boundary vertex to each number.
    \item From each internal vertex, draw a horizontal edge towards the right connecting to the nearest internal vertex with a $+$ or a boundary vertex.
    \item From each internal vertex, draw a vertical edge downwards connecting to the nearest internal vertex with a $+$ or a boundary vertex. 
    \item Direct all the horizontal edges from right to left, and all vertical edges from top to bottom. This creates $k$ sources and $n-k$ sinks.
    \item All vertical edges have weight 1. A horizontal edge has weight $a_b$ when the cell $b$, which is the sink of that edge, has a $+$, it has weight $c_b$ when the sink has $\bullet$.
\end{itemize}
The weight matrix $W_D$ of the above network is given as follows:
\begin{equation*}
(W_D)_{st}= (-1)^{b(s,t)}\sum_{P:s\to t} \mathrm{wt}(P).
\end{equation*}
where the sum is over all paths between sources $s$ and $t$, and $b(s,t)$ is the number of sources strictly between $s$ and $t$. We declare a path from any source to itself which has weight 1. See \cref{ex:twnet}
\end{definition}

\begin{example}\label{ex:twnet}
\begin{figure}
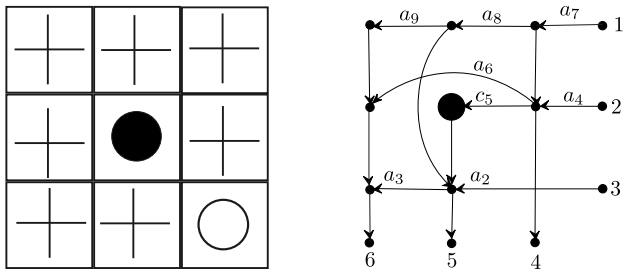

    \centering
    \includesvg[width=35mm, height=35mm]{godiagram.svg} \hspace{10mm}
    \includesvg[width=35mm, height=35mm]{tw4} 
    \caption{Go-diagram and its corresponding network $M_D$.}
    \label{fig:twnetwork}
\end{figure}
Let $D$ be the Go-diagram on the left in \cref{fig:twnetwork}  with the associated network $M_D$ on the right. The weight matrix for $D$ is given by
\begin{equation*}
    W_D=\left[\begin{matrix}
        1&0&0&a_7&a_7a_8&a_7(a_8a_9+a_8a_3+c_5a_3)\\
        0&1&0&-a_4&-c_5a_4&-a_4(a_6+c_5a_3)\\
        0&0&1&0&a_2&a_2a_3
    \end{matrix}\right].
\end{equation*}
\end{example}
Talaska and Williams proved the following theorem:
\begin{theorem}
    Let $\mathcal{W}_D$ be the set of weight matrices obtained by varying the edge weights $a_b$ over $\mathbb{F}^*$, and $c_b$ over $\mathbb{F}$. Then each point in $\mathcal{D}_D$ is realized by a unique matrix in $\mathcal{W}_D$ and each matrix in $\mathcal{W}_D$ realizes a point in $\mathcal{D}_D$.
\end{theorem}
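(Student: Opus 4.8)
The plan is to reduce the statement to the algebraic Deodhar parametrization supplied by the Marsh--Rietsch theorem, by showing that the signed path-sum matrix $W_D$ is a distinguished matrix representative of the projected flag $\pi_k(gB^+)$. First I would record the combinatorial dictionary between the two descriptions. Writing $\textbf{v}\prec\textbf{w}_\lambda$ for the distinguished subexpression that $D$ encodes, the cells of $D$ filled with $+$ are exactly the indices in $J_\textbf{v}^\Box$, the cells filled with $\bullet$ are exactly those in $J_\textbf{v}^\bullet$, and the crossings $\circ$ are the indices in $J_\textbf{v}^\circ$ (this is immediate from the configuration-based definition of the filling together with the fact that elbow tiles contribute $1$ and crossing tiles contribute their attached transposition). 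Under this dictionary the edge-weight data $(a_b)\in(\mathbb{F}^*)^{|J_\textbf{v}^\Box|}$ over the $+$-cells and $(c_b)\in\mathbb{F}^{|J_\textbf{v}^\bullet|}$ over the $\bullet$-cells parametrizing $\mathcal{W}_D$ is identified with the parameters $(p_l,q_l)$ cutting out $G_{\textbf{v},\textbf{w}}$, via $a_b\leftrightarrow p_l$ and $c_b\leftrightarrow q_l$. Thus $\mathcal{W}_D$ and $G_{\textbf{v},\textbf{w}}$ share the parameter space $(\mathbb{F}^*)^{|J_\textbf{v}^\Box|}\times\mathbb{F}^{|J_\textbf{v}^\bullet|}$.

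The technical heart is to verify that, for a fixed choice of parameters, the matrix $W_D$ realizes the point $\pi_k(gB^+)$, where $g=g_1\cdots g_m\in G_{\textbf{v},\textbf{w}}$ is the associated group element. I would prove this with a transfer-matrix argument. Each factor $g_l$ is the image under $\phi_{i_l}$ of a $2\times2$ matrix acting on the adjacent coordinates $i_l,i_l+1$: the matrix $y_{i_l}(p_l)$ for a $\Box$-cell, $x_{i_l}(q_l)\dot{s}_{i_l}^{-1}$ for a $\bullet$-cell, and $\dot s_{i_l}$ for a $\circ$-cell. Laying these local operations out along the reading order of $\lambda$ reproduces exactly the wiring of $M_D$, so that following a directed path $P$ from a source $s$ to a boundary vertex $t$ accumulates the product of edge weights $\mathrm{wt}(P)$, while each swap $\dot s_{i_l}=\phi_{i_l}\!\left(\begin{smallmatrix}0&-1\\1&0\end{smallmatrix}\right)$ contributes a factor of $-1$. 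Summing over all paths and collecting these signs is intended to yield precisely $(W_D)_{st}=(-1)^{b(s,t)}\sum_{P:s\to t}\mathrm{wt}(P)$, with the count $b(s,t)$ of intervening sources recording the net contribution of the $\dot s$-factors. In essence this is the identity ``sum over directed paths in a DAG $=$ product of local transfer matrices'', the content lying in matching the algebra of the $\phi_i$-factors to the geometry of the pipes and in tracking the signs.

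With the key computation in hand the bijection follows formally. By the Marsh--Rietsch theorem $\mathcal{R}_{\textbf{v},\textbf{w}}=G_{\textbf{v},\textbf{w}}B^+$, and $g\mapsto gB^+$ is a bijection from $G_{\textbf{v},\textbf{w}}\cong(\mathbb{F}^*)^{|J_\textbf{v}^\Box|}\times\mathbb{F}^{|J_\textbf{v}^\bullet|}$ onto this Richardson stratum. Since $w\in W^k$ is a Grassmannian permutation with its unique descent at $k$, the projection $\pi_k$ restricts to an isomorphism of $\mathcal{R}_{\textbf{v},\textbf{w}}$ onto $\mathcal{D}_D$, which is the content of the Kodama--Williams reduction of the Deodhar decomposition to the Grassmannian. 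Hence the composite map (parameters)$\,\mapsto g\mapsto\pi_k(gB^+)$ is a bijection onto $\mathcal{D}_D$, and by the previous paragraph it factors through $W_D$. This simultaneously gives that every matrix in $\mathcal{W}_D$ realizes a point of $\mathcal{D}_D$ and that every point of $\mathcal{D}_D$ is realized by a unique such matrix. Uniqueness is also visible directly: the source columns of $W_D$ form an identity block, so $W_D$ is the canonical (row-reduced) representative of its span and the parameters can be read back from its entries.

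The step I expect to be the main obstacle is the transfer-matrix computation of the second paragraph, and specifically getting the signs right. The raw product $g_1\cdots g_m$ carries the factors $\dot s_{i_l}$ and $\dot s_{i_l}^{-1}$, which both permute and negate coordinates, and one must check that after passing to the row-reduced representative of $\pi_k(gB^+)$ these reorganize into exactly the single global sign $(-1)^{b(s,t)}$ attached to each matrix entry, rather than to individual paths. Verifying that the local pictures at configurations $A$, $C$, $D$ assemble correctly along the reading word---and that configuration $B$ never occurs, precisely because $D$ is a Go-diagram, so that no inverted elbow corrupts a path weight---is the delicate part; everything else is bookkeeping resting on the Marsh--Rietsch and Kodama--Williams results.
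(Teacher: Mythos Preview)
The paper does not prove this statement; it is quoted verbatim as the main theorem of Talaska--Williams and then used as a black box in the proof of the paper's own Theorem~\ref{resparameterthm} (via Lemma~\ref{twtransform}, which shows $S_D$ is row-equivalent to $W_D$). So there is no in-paper proof to compare against.

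That said, your proposed route is essentially the one Talaska and Williams themselves take: start from the Marsh--Rietsch product $g=g_1\cdots g_m\in G_{\textbf{v},\textbf{w}}$, project to $Gr_{k,n}$, and identify the resulting canonical representative with the network weight matrix $W_D$. Two remarks on your sketch. First, the identification $a_b\leftrightarrow p_l$, $c_b\leftrightarrow q_l$ is not literally the identity; in Talaska--Williams the bijection between network weights and Marsh--Rietsch parameters involves some signs and inverses, so you should expect to discover a monomial change of variables rather than equality on the nose. Second, your diagnosis of the obstacle is accurate: the entire difficulty is the sign bookkeeping coming from the $\dot s_{i_l}^{\pm1}$ factors, and verifying that these collapse to the global $(-1)^{b(s,t)}$ rather than path-dependent signs. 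Talaska--Williams handle this not by a one-shot transfer-matrix identity but by an induction on the reading word (peeling off one factor $g_m$ at a time and tracking how the network and the representative change), which localizes the sign analysis. If you attempt the direct transfer-matrix computation you will likely be forced into the same inductive structure anyway.
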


\section{Restricted path parametrization}\label{sec:respaths}

    In this section, we describe the \textbf{restricted path parametrization}. We shall describe paths on the pipe dream corresponding to a Go-diagram following certain rules, and we will use this to define parametrization for the Deodhar component corresponding to the Go-diagram. Later, we will modify the rules on these paths, which will allow us to parametrize the image of $Gr_{k,n}$ under the isomorphism $Gr_{k,n}\simeq Gr_{n-k,n}$. Finally, we will combine these two pictures to describe Pl\"ucker coordinates.
    
    Let $\lambda$ be a partition inside a $k\times (n-k)$ rectangle. Let $D$ be a Go-diagram on $\lambda$. We label the edges of the southeast border of $\lambda$ with the numbers $1$ to $n$, from northeast to southwest. This gives a labelling of the pipes in the pipe dream corresponding to $D$. We label the edges at the northwest border of $\lambda$ with the labels of the pipes exiting at that boundary. Finally, we attach vertices to each of these labels and extend the pipes to connect them to the two vertices (one at the southeast boundary and one at the northwest boundary) that have the same label as the pipe. The vertices on the southeast boundary (resp. north west boundary) will form the sinks (resp. sources) for our restricted paths. \cref{fig:resnetwork} gives an example of this for a particular Go-diagram.
    \begin{definition}[Restricted Paths]\label{def:respaths}
        A \textbf{restricted path} is a path between a source and sink vertex, which starts at a pipe and is allowed to jump between pipes at the cells of $\lambda$. The path has to satisfy one of the following rules when it enters a cell which has $+$ or a $\bullet$:
            \begin{enumerate}
                \item The path follows the pipe it was on.
                \item The path jumps to a pipe with the \textbf{lower} index.
            \end{enumerate}
         A \textbf{jump site} for the restricted path is a cell where the path changes pipes.
          \cref{fig:jumpcases} shows when the possible ways a restricted path can jump (the red coloured pipe has a lower index). See \cref{ex:respaths} for some examples of restricted paths.
          \begin{figure}[h]
        \centering
        \includesvg[width=90mm, height=30mm]{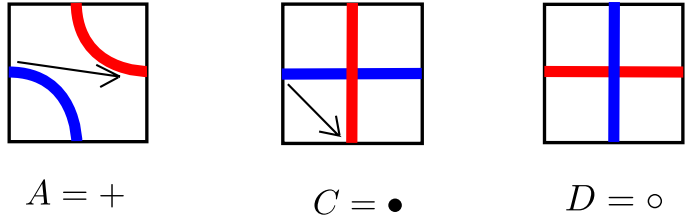}
        \caption{Possible ways for a restricted path to jump.}
        \label{fig:jumpcases}
    \end{figure}
          
    \end{definition}
    Note that we can only have jump sites at cells which have $+$ or $\bullet$. Moreover, a restricted path can jump at a cell only when it enters the cell horizontally.

    \begin{definition}
        At each cell $b$ of the pipe dream $D$, let $\sigma_D(b)=(i,j)$ where $i$ is the index of the pipe exiting through the left boundary of $b$, and $j$ is the index of the other pipe. So at $b$, the restricted path can jump from $i$ to $j$ if there is no white stone at $b$. $\sigma_D(b)$ will be referred to as the \textbf{jump coordinate} of $b$ in $D$. 
        
        To compute the \textbf{weight} of the restricted path, we attach a parameter $\beta_b$ to each cell, and we will refer to this as the \textbf{jump parameter} at $b$. The weight of a restricted path is then defined as the product of the jump parameters at the jump sites of the restricted path.

        The weight matrix of this network is given by:
    \begin{equation*}
        (\Tilde{R}_D)_{st}= \sum_{P:s\to t} \mathrm{wt}(P)
    \end{equation*}
    where the sum runs over all restricted paths starting at source labelled $s$ and sink labelled $t$.

    Let $v_1,v_2,...,v_k$ be the labels that appear on the west boundary from top to bottom.  Let $R_D$ be $k\times n$ matrix defined as follows:
    \begin{equation*}
        (R_D)_{st} =(\Tilde{R}_D)_{v_s t}.
    \end{equation*}
    See \cref{ex:respaths}.
    \end{definition}
     
    \begin{example}\label{ex:respaths}

        \begin{figure}
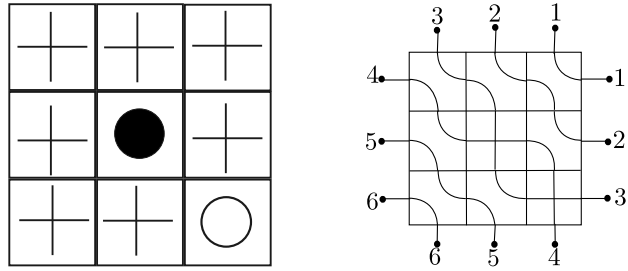

        \centering
        \includesvg[width=35mm, height=35mm]{godiagram.svg} \hspace{10mm}
        \includesvg[width = 35mm, height=35mm]{resnetwork}
        \caption{Go-diagram($D$) and its corresponding pipe dream}
        \label{fig:resnetwork}
    \end{figure}
    
    \begin{figure}
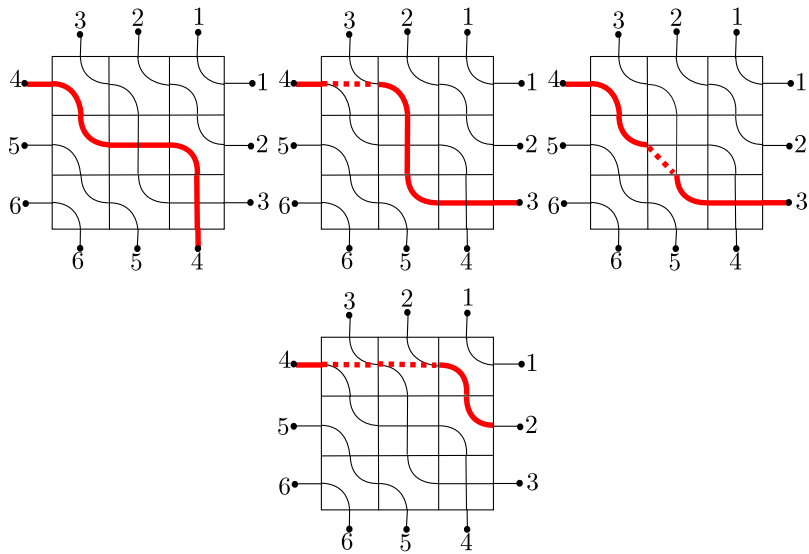

        \centering
        \includesvg[width=35mm, height=35mm]{rp_1}
        \includesvg[width=35mm, height=35mm]{rp_2}
        \includesvg[width=35mm, height=35mm]{rp_3}\\
        \vspace{2mm}
        \includesvg[width=35mm, height=35mm]{rp_4}
        \caption{Examples of restricted paths.}
        \label{fig:respaths}
    \end{figure}
    \begin{figure}
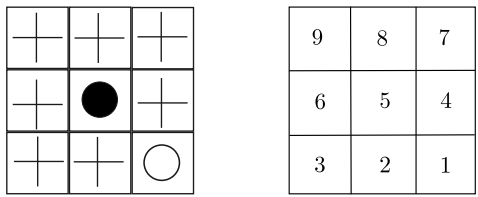

        \centering
        \includesvg[width=25mm, height=25mm]{godiagram.svg} \hspace{10mm}
        \includesvg[width = 25mm, height=25mm]{readingword}
        \caption{Go-diagram $D$ and its corresponding reading word}
        \label{fig:readingword1}
    \end{figure}
        Let $D$ be a Go-diagram given on the left in \cref{fig:resnetwork}. We attach vertices to the labels on the boundary, and extend the pipes to connect to these vertices to get the network shown in the right in \cref{fig:resnetwork}.

        \cref{fig:respaths} gives examples of paths on the above network; the restricted path is red, and the dotted lines denote that the restricted path jumps at that cell. Let $\beta_i$ be the jump parameter at the cell with filling $i$ in the reading word given in \cref{fig:readingword1}. For the restricted paths shown in \cref{fig:respaths}, the weights in the order they appear are $1$, $\beta_9$, $\beta_5$, and $\beta_9\beta_8$.

        The weight matrix $\Tilde{R}_D$ for our example is:
        \begin{equation*}
        \Tilde{R}_D=\left[\begin{matrix}
                1&0&0&0&0&0\\
                \beta_7&1&0&0&0&0\\
                \beta_8\beta_7&\beta_8&1&0&0&0\\
                \beta_9\beta_8\beta_7&\beta_4+\beta_9\beta_8&\beta_9+\beta_5&1&0&0\\
                0&\beta_6\beta_4&\beta_2+\beta_6\beta_5&\beta_6&1&0\\
                0&0&\beta_3\beta_2&0&\beta_3&1
            \end{matrix}\right].
    \end{equation*}
        To get the truncated matrix $R_D$, we take the $4^{th}, 5^{th}$ and the $6^{th}$ row (in this order) and get, 
    \begin{equation*}
        R_D=\left[\begin{matrix}
                \beta_9\beta_8\beta_7&\beta_4+\beta_9\beta_8&\beta_9+\beta_5&1&0&0\\
                0&\beta_6\beta_4&\beta_2+\beta_6\beta_5&\beta_6&1&0\\
                0&0&\beta_3\beta_2&0&\beta_3&1
            \end{matrix}\right].
    \end{equation*}
    
    \end{example}

    The main theorem of this section is that we can parametrize the Deodhar component $\mathcal{D}_D$ using the matrices $R_D$:
    \begin{theorem}\label{resparameterthm}
        Let $\mathcal{R}_D$ be the set of matrices $R_D$ obtained when we let the jump parameters at cells with $\bullet, +$ vary over $\mathbb{F}, \mathbb{F}^*$, respectively, and set the jump parameter at cells with $\circ$ to $0$. Then each matrix in $\mathcal{R}_D$ realizes a point in $\mathcal{D}_D$, and each point in $\mathcal{D}_D$ is realized by a unique matrix in $\mathcal{R}_D$. 
    \end{theorem}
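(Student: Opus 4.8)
The plan is to identify the restricted-path matrix $\widetilde{R}_D$ with the Marsh--Rietsch product defining $\mathcal{D}_D$, and then transport the bijectivity of the Marsh--Rietsch parametrization to $\mathcal{R}_D$. Recall that $\mathcal{D}_D=\pi_k(\mathcal{R}_{\textbf{v},\textbf{w}})$ with $\mathcal{R}_{\textbf{v},\textbf{w}}=G_{\textbf{v},\textbf{w}}B^+$ \cite{marsh2004parametrizations}, so every point of $\mathcal{D}_D$ is $\pi_k(gB^+)$ for a unique $g=g_1\cdots g_m\in G_{\textbf{v},\textbf{w}}$, and $g\mapsto\pi_k(gB^+)$ is a bijection $\mathbb{F}^{|J_{\textbf{v}}^\bullet|}\times(\mathbb{F}^*)^{|J_{\textbf{v}}^\Box|}\to\mathcal{D}_D$ (using that $G_{\textbf{v},\textbf{w}}\to\mathcal{R}_{\textbf{v},\textbf{w}}$ is injective and that $\pi_k$ is injective on the component \cite{kodamawilliams}). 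The theorem then reduces to two statements: (i) for a suitable invertible change of variables between the jump parameters $\{\beta_b\}$ and the Marsh--Rietsch parameters $\{p_l,q_l\}$, the rows $v_1,\dots,v_k$ of $\widetilde{R}_D$ form a basis of the $k$-plane $\pi_k(gB^+)$; and (ii) this change of variables is a bijection carrying the parameter domain of $\mathcal{R}_D$ (with $\beta_b\in\mathbb{F}^*$, $\mathbb{F}$, or $\{0\}$ according as $b$ is $+$, $\bullet$, or $\circ$) onto $G_{\textbf{v},\textbf{w}}$.

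The heart of the argument is a transfer-matrix computation proving (i). Processing the cells of $\lambda$ in a reading order, I would show that the path sum $\sum_{P}\mathrm{wt}(P)$ factors as the ordered product of local matrices, one per cell, where the local matrix at cell $b$ is the Marsh--Rietsch factor $g_{l(b)}$ written in the pipe labelling. The dictionary is read off from the three cell types: at a $\circ$ cell the pipes cross and the path must follow its pipe (the permutation factor $\Dot{s}_i$); at a $+$ cell the elbow lets a path either continue or jump to the lower-indexed pipe with weight $\beta_b$ (the elementary factor $y_i(p_l)$); and at a $\bullet$ cell the path may follow the crossing or use the parameter to remain on the lower pipe (the factor $x_i(q_l)\Dot{s}_i^{-1}$). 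The restrictions in \cref{def:respaths}---that a jump may occur only at a $+$ or $\bullet$ cell, only toward the lower index, and only on horizontal entry---are precisely what single out the nonzero off-diagonal entries of these local matrices, so the product over all cells telescopes the path sum into the entries of $g$. Selecting the rows indexed by the west-boundary labels $v_1,\dots,v_k$ then implements the projection $\pi_k$, giving $\mathrm{rowspan}(R_D)=\pi_k(gB^+)$.

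Granting (i), the two halves of the theorem follow quickly. Each matrix in $\mathcal{R}_D$ realizes a point of $\mathcal{D}_D$ because it equals such an $R_D$; conversely, as the $\beta_b$ range over $\mathbb{F}^*$, $\mathbb{F}$ and $\{0\}$, the corresponding $(p_l,q_l)$ sweep out all of $G_{\textbf{v},\textbf{w}}$, giving surjectivity onto $\mathcal{D}_D$. Uniqueness is where I would exploit the explicit triangular shape of $\widetilde{R}_D$: each jump parameter $\beta_b$ occurs as the lowest-order (indeed linear) term of a distinguished entry, so the parameters can be recovered from the matrix one at a time, exactly as in \cref{ex:respaths}. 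This realizes the change of variables of (ii) as a bijection and, combined with the injectivity of the Marsh--Rietsch parametrization and of $\pi_k$ on the component, shows that each point of $\mathcal{D}_D$ has a unique preimage in $\mathcal{R}_D$.

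I expect the main obstacle to lie in the bookkeeping inside the transfer-matrix step (i): orienting the pipe labelling so that ``jump to the lower index'' consistently matches the off-diagonal entries of $y_i$ and $x_i\Dot{s}_i^{-1}$, and tracking the signs introduced by the permutation factors $\Dot{s}_i$ through the wire crossings at $\circ$ and $\bullet$ cells. Since $\widetilde{R}_D$ is sign-free while $g$ carries the $-1$'s of the $\Dot{s}_i$, these signs must either cancel against the freedom to rescale the $k$ basis rows or be absorbed into the change of variables (in the rank-one case the $+$-cell dictionary is already reciprocal, $\beta_b=p_l^{-1}$); verifying that this happens uniformly over all configurations is the delicate part. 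A secondary point is to confirm the claimed factorization of the path sum into local matrices, i.e.\ that sweeping the reading order never permits a path to jump on vertical entry or toward a higher index, so that no spurious terms appear.
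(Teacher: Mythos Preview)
Your route is genuinely different from the paper's. The paper never compares $\widetilde{R}_D$ to the Marsh--Rietsch product $g$ at all; instead it interposes an auxiliary network $N_D$ with corner weights $\alpha_b$ and a second weight function $\mathrm{wt}'$, producing a matrix $S_D$. It then proves (i) $R_D$ is obtained from $S_D$ by rescaling rows under the bijective change of variables~\eqref{eqn:jumptransform} (Lemma~\ref{wttransform}), and (ii) $S_D$ is row-equivalent to the Talaska--Williams matrix $W_D$ under the bijection~\eqref{eqn:twparatransform} (Lemma~\ref{twtransform}); the latter is the substantive step and rests on an explicit weight-preserving bijection (Lemma~\ref{bijection}) between non-intersecting path families in $N_D$ and single paths in $M_D$, combined with an inductive row reduction down the rows of $\lambda$. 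The product factorisation you are aiming for is in fact established only afterwards, as Lemma~\ref{prodlemma}, and is not used in the proof of the theorem.

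Your transfer-matrix step, however, contains a real gap rather than mere bookkeeping. In the pipe-labelled basis---which is the basis in which $\widetilde{R}_D$ is written, its rows and columns being indexed by pipe labels---the local factor at a $\circ$ cell is the \emph{identity}: a restricted path must follow its pipe through a crossing, so its pipe label does not change. Your path sum therefore factors exactly as $\prod_b X_{\sigma_D(b)}(\beta_b)$ (this is Lemma~\ref{prodlemma}), with the $\circ$ cells contributing $X_{\sigma_D(b)}(0)=I$. This is \emph{not} the Marsh--Rietsch element $g$, which carries genuine permutation factors $\dot{s}_{i_l}$ at $\circ$ cells and $\dot{s}_{i_l}^{-1}$ inside each $\bullet$ factor: one matrix is unipotent lower-triangular, the other lies in the Bruhat double coset for $(v,w)$, and they cannot be made to agree by a monomial change of variables or by sign adjustments alone. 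To rescue your approach you would have to commute all the $\dot{s}$ factors in $g$ to one end---and it is precisely this conjugation that converts the positional indices $i_l$ into the pipe labels $\sigma_D(b)$---then argue that the residual signed permutation is absorbed jointly by the $B^+$ quotient and by your choice of the rows $v_1,\dots,v_k$. That is a substantive calculation (and its correctness is sensitive to the conventions for $\phi_i$ and $\pi_k$), not the sign-tracking afterthought you describe; it is what is missing from the proposal.
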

    To prove this theorem, we will first write the parameters in the Talaska--Williams network as bijective functions of our jump parameters, produce the matrix $W_D$ from these parameters, and then show that $W_D$ is row-equivalent to $R_D$. To do this, we first place restricted paths on a grid network, similar to $M_D$, and define an alternative weight function on restricted paths.
    \begin{definition}
        We define the network $N_D$ for a Go-diagram $D$:
        \begin{itemize}
            \item Associate an internal vertex to each cell.
            \item Label the southeast border of the Young diagram with numbers $1$ to $n$. Label the northwest border of the Young diagram with the label of the pipe that exits at the cell.
            \item Associate a boundary vertex to each label on the boundary.
            \item From each internal vertex and boundary vertex on the west boundary, draw an edge right connecting the vertex to the nearest vertex.
            \item From each internal vertex and boundary vertex on the north boundary, draw an edge down connecting the vertex to the vertex.
            \item Direct the horizontal edges left to right, and the vertical edges top to bottom. The boundary vertices on the northwest boundary will form the sources, and the ones on the southeast boundary will form the sinks.
            \item At each internal vertex $b$ on the grid, attach a parameter $\alpha_b$ to the lower-left corner. If $b$ has $+$ on it, put the weight $\frac{-1}{\alpha_b}$ on the upper-right corner.
        \end{itemize}
        We now consider restricted paths as paths on the network $N_D$, where the edges of this grid are viewed as segments of the pipes.
    \end{definition}

    \begin{example}\label{ex:resnet}
        Continuing with the same Go-diagram of \cref{ex:respaths}, \cref{fig:gridresnetwork} shows us the corresponding network $N_D$.
        \begin{figure}[h]
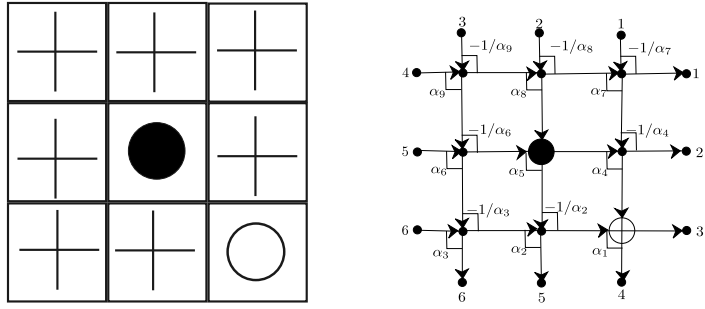

        \centering
        \includesvg[width=40mm, height=40mm]{godiagram.svg} \hspace{10mm}
        \includesvg[width=40mm, height=40mm]{r4}
        \caption{Go-diagram($D$) and its corresponding network $N_D$.}
        \label{fig:gridresnetwork}
    \end{figure}
    We also show how to place two restricted paths from \cref{ex:respaths} on our network $N_D$ in \cref{fig:piperes}.
    \begin{figure}[h]
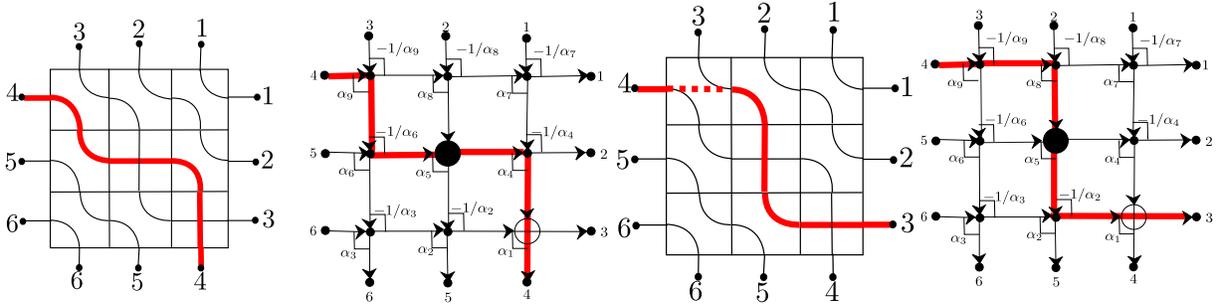

        \centering
        \includesvg[width=80mm, height=40mm]{piperes1}
        \includesvg[width=80mm, height=40mm]{piperes2}
        \caption{Example of two restricted paths placed on $N_D$.}
        \label{fig:piperes}
    \end{figure}
    \end{example}

    We have the following lemma about the restricted paths:
    \begin{lemma}\label{resconstraints}
        Restricted paths on $N_D$ when the enter the cell $b\in \lambda$, they have to follow the constraints below:
        \begin{itemize}
            \item If the restricted path enters $b$ from left, then it can always go right, but down moves are only allowed if $b$ has $+$ or $\bullet$.
            \item If the restricted path enters $b$ from top, then it goes down if the cell has $\circ$ or $\bullet$, otherwise it goes right.
        \end{itemize}
    \end{lemma}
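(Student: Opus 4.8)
The plan is to translate the two rules defining a restricted path in \cref{def:respaths} into the local grid moves on $N_D$, organizing the argument as a case analysis on the filling of the cell $b$ (one of $+$, $\circ$, $\bullet$) and on the side through which the path enters $b$ (left or top). The first step is to record the pipe geometry underlying each filling, reading it off \cref{fig:configs}. Since $D$ is a Go-diagram, a $+$ cell carries an elbow tile (configuration $A$) whose two strands join the top side to the right side and the left side to the bottom side, while a $\circ$ or $\bullet$ cell carries a crossing tile consisting of one horizontal strand (left--right) and one vertical strand (top--bottom). Viewing the edges of $N_D$ as the corresponding pipe segments, I would first observe that a path which merely \emph{follows its pipe} (rule~1) makes a deterministic move: at an elbow it turns (top$\to$right and left$\to$bottom), and at a crossing it goes straight (left$\to$right and top$\to$bottom).

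Next I would fold in the jumping rule (rule~2) together with its two standing restrictions, both noted immediately after \cref{def:respaths} and illustrated in \cref{fig:jumpcases}: a jump can occur only at a cell filled with $+$ or $\bullet$, and only when the path enters horizontally (from the left). By the definition of the jump coordinate $\sigma_D(b)=(i,j)$, a jump carries the path off the strand exiting through the left boundary (index $i$) onto the other strand (index $j$). Translating this into grid directions, at a $+$ cell the left-exiting strand is the left--bottom one, so entering from the left and jumping onto the remaining top--right strand means exiting \emph{right} instead of down; entering from the top there is no horizontal entry, hence no jump, and the forced turn of the elbow sends the path right. At a $\bullet$ cell the left-exiting strand is the horizontal one, so entering from the left and jumping onto the vertical strand means exiting \emph{down} instead of right; entering from the top the path lies on the vertical strand, cannot jump, and must continue down. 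At a $\circ$ cell the strand geometry is identical to the $\bullet$ case, but since no jump is permitted the path simply goes straight on either entry.

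Collecting these cases yields exactly the two asserted constraints. Entering from the left, a rightward move is always available (the forced straight move at a crossing, or the jump at an elbow), while a downward move is available precisely when $b$ is $+$ (the follow move) or $\bullet$ (the jump), and is forbidden at $\circ$ since descending there would require a disallowed jump. Entering from the top, the forced pipe move is a turn to the right at a $+$ and a straight descent at a $\circ$ or $\bullet$, with no jump available. The step I expect to require the most care is pinning down the strand geometry and the direction conventions simultaneously -- in particular confirming, against \cref{fig:configs} and the definition of $\sigma_D$, that switching off the left-exiting strand on a horizontal entry exits \emph{right} at an elbow but \emph{down} at a crossing, and that the ``lower index'' stipulation of rule~2 is compatible with this in each configuration. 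Once that geometric matching is verified, the remaining bookkeeping across the six (filling, entry-side) cases is routine.
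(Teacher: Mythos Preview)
Your proposal is correct and follows essentially the same approach as the paper's proof: both argue by case analysis on the entry side (left versus top) and the cell filling, identifying in each case which grid move corresponds to ``following the pipe'' and which to ``jumping,'' and then reading off the stated constraints. The paper's version is simply a more compressed rendering of the same six-case analysis you spell out.
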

    \begin{proof}
        If the restricted path is entering $b$ from the left, then it is allowed to go right regardless of the entry in the cell. If the cell had $+$, then this corresponds to the restricted path jumping; otherwise, the restricted path is simply following the pipe. The restricted path can also go down if $b$ had $+$ (in this case, it is following the pipe), or if it had $\bullet$ (in this case, it is jumping between pipes).

        When the restricted path enters $b$ from the top, it can never jump; therefore, it goes right if $b$ has $+$, otherwise it goes down.
    \end{proof}
    We introduce a new weight function on the restricted paths, which uses parameters on the network $N_D$. We define $\mathrm{wt}'(P)$ of a restricted path to be the product of all corners that the restricted path traverses.  Moreover, we also extend the definition of restricted paths on $N_D$ as follows:
    \begin{definition}[Restricted paths on $N_D$]
        A restricted path in $N_D$ is a path between two vertices in $N_D$, while following the constraints of Lemma \ref{resconstraints}. If the path starts at an internal vertex, then it can either begin by going down or going right; that is, the first step is unconstrained.
    \end{definition}
    We extend the weight function $\mathrm{wt}'$ for all the restricted paths in $N_D$. Now we write the weight matrix with respect to our new weight function:
    \begin{equation*}
        (\Tilde{S}_D)_{st}= \sum_{P:s\to t} \mathrm{wt}'(P).
    \end{equation*}
    We also define the truncated version of the above matrix; let $v_1,v_2,...,v_k$ be the labels that appear on the west boundary from top to bottom.  Let $S_D$ be  $k\times n$ matrix defined as follows:
    \begin{equation*}
        (S_D)_{st} =(\Tilde{S}_D)_{v_s t}.
    \end{equation*}
    \begin{example}
        Continuing \cref{ex:resnet}, let $P_1$ be the restricted path on the left (the one that corresponds to the pipe in the pipe dream), and let $P_2$ be the other path. From \cref{ex:respaths}, we have $\mathrm{wt}(P_1) = 1, \mathrm{wt}(P_2) = \beta_9$. Our new weight function on these paths is:
        \begin{align*}
            \mathrm{wt}'(P_1) &= \alpha_9\cdot\frac{-1}{\alpha_6}\cdot \alpha_4\\
            \mathrm{wt}'(P_2)&= \alpha_8\cdot \frac{-1}{\alpha_2}
        \end{align*}
        For our example, the weight matrix is given by:
    \begin{equation*}
            \Tilde{S}_D=\left[\begin{matrix}
                -\frac{1}{\alpha_7}&0&0&0&0&0\\
                -\frac{1}{\alpha_8}&\frac{\alpha_7}{\alpha_8\alpha_4}&0&0&0&0\\
                -\frac{1}{\alpha_9}&-\frac{\alpha_7}{\alpha_9}&\frac{\alpha_8}{\alpha_9\alpha_2}&0&0&0\\
                1&-\frac{\alpha_9}{\alpha_6}-\frac{\alpha_7}{\alpha_4}&\frac{\alpha_9\alpha_5}{\alpha_6\alpha_2}-\frac{\alpha_8}{\alpha_2}&-\frac{\alpha_9\alpha_4}{\alpha_6}&0&0\\
                0&1&-\frac{\alpha_6}{\alpha_3}-\frac{\alpha_5}{\alpha_2}&\alpha_4&-\frac{\alpha_6\alpha_2}{\alpha_3}&0\\
                0&0&1&0&\alpha_2&\alpha_3
            \end{matrix}\right].
    \end{equation*}
    and the truncated matrix is:
    \begin{equation*}
            S_D=\left[\begin{matrix}
                1&-\frac{\alpha_9}{\alpha_6}-\frac{\alpha_7}{\alpha_4}&\frac{\alpha_9\alpha_5}{\alpha_6\alpha_2}-\frac{\alpha_8}{\alpha_2}&-\frac{\alpha_9\alpha_4}{\alpha_6}&0&0\\
                0&1&-\frac{\alpha_6}{\alpha_3}-\frac{\alpha_5}{\alpha_2}&\alpha_4&-\frac{\alpha_6\alpha_2}{\alpha_3}&0\\
                0&0&1&0&\alpha_2&\alpha_3
            \end{matrix}\right].
    \end{equation*}
    \end{example}

    We will now describe how to transform the $\alpha_b$ parameters into the parameters for $M_D$. In the network $M_D$, for a cell $b\in \lambda$, let $\chi(b)$ be the nearest internal vertex with a $+$ or boundary vertex right of $b$. Let $\rho(b)$ be the number of sources strictly between the source boundary vertex, which is in the same row as $b$ and the sink boundary vertex, which is in the same column as $b$. Now we define the parameters for $M_D$ in terms of our corner weight parameters in $N_D$:
    \begin{align}
        a_b&= (-1)^{\rho(b)-\rho(\chi(b))}\frac{\alpha_b}{\alpha_{\chi(b)}} & \texttt{if }b\;\texttt{has }+\nonumber\\
        c_b&= (-1)^{\rho(b)-\rho(\chi(b))}\frac{\alpha_b}{\alpha_{\chi(b)}} & \texttt{if }b\;\texttt{has }\bullet\label{eqn:twparatransform}
    \end{align}
    where we put $\alpha_{\chi(b)} = 1$ and $\rho(\chi(b))=0$ if $\chi(b)$ is a boundary vertex. Note that $c_b = 0\iff \alpha_b = 0$ if $b= \bullet$ and $a_b\not=0$ if $b=+$, so we have $c_b\in \mathbb{F}, a_b\in \mathbb{F}^*$. We can also evaluate $\alpha_b$'s in terms of $a_b$'s and $c_b$'s recursively, starting with cells $b$ on the southeast boundary of $\lambda$. So \cref{eqn:twparatransform} gives a continuous bijection between the parameters.
    \begin{lemma}\label{twtransform}
        Let $\mathcal{S}_D$ be the set of matrices $S_D$ obtained when we let the $\alpha_b$ at cells with $\bullet, +$ vary over $\mathbb{F}, \mathbb{F}^*$, respectively, and setting the $\alpha_b$ at cells with $\circ$ to $0$. Then each matrix is row equivalent to a unique matrix in $\mathcal{W}_D$.
    \end{lemma}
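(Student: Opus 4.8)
The plan is to prove the sharper statement that, under the change of parameters \eqref{eqn:twparatransform}, the truncated matrix $S_D$ is obtained from $W_D$ by left multiplication by an invertible unitriangular matrix, so that the two have the same row span; the uniqueness asserted in the statement is then automatic. First I would record two structural facts. On one hand $W_D$ is already in reduced row echelon form: by construction the only path from a source to itself has weight $1$ and there are no paths between distinct sources, so the source columns of $W_D$ form an identity submatrix and are its pivot columns. On the other hand $\tilde S_D$ is lower triangular with respect to the pipe labelling, because by \cref{def:respaths} a restricted path may only jump to pipes of strictly smaller index; in particular the diagonal entries of $\tilde S_D$ are nonzero, so the truncation $S_D$ to the rows $v_1,\dots,v_k$ is a full-rank $k\times n$ matrix whose pivot columns coincide with those of $W_D$.

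The heart of the argument is a weight-compatible correspondence between restricted paths on $N_D$ and directed paths on $M_D$. By \cref{resconstraints} a restricted path turns or branches only at cells filled with $+$ or $\bullet$, while the $\circ$-cells are transparent (enter-left/exit-right and enter-top/exit-bottom); hence a restricted path is determined by, and determines, its sequence of turning cells, which are exactly the internal vertices of $M_D$. This sets up a bijection between restricted paths from $s$ to $t$ on $N_D$ and directed paths from $s$ to $t$ on $M_D$. I would then compare the two weight functions along this bijection: over each maximal horizontal run between consecutive turning cells $\chi(b)$ and $b$, the corner weights telescope, the entering corner contributing $\alpha_b$ and the exit at $\chi(b)$ contributing the reciprocal $-1/\alpha_{\chi(b)}$, so the run carries weight $-\alpha_b/\alpha_{\chi(b)}$, while vertical runs carry weight $1$ on both sides. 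Comparing with \eqref{eqn:twparatransform}, this is precisely $a_b$ (resp.\ $c_b$, when $b$ has $\bullet$) up to the sign $(-1)^{\rho(b)-\rho(\chi(b))}$. Thus $\mathrm{wt}'(P)$ factors as a product over the runs of $P$ of the edge weights of the corresponding $M_D$-path, times an accumulated sign; one checks in the example that, e.g., $\mathrm{wt}'(P_1)=\alpha_9\cdot(-1/\alpha_6)\cdot\alpha_4$ is exactly such a telescoping product.

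Summing over all paths from $s$ to $t$, the accumulated signs collect telescopically into the prefactor $(-1)^{b(s,t)}$ appearing in the definition of $W_D$, together with a nonzero factor depending only on the source row; hence each row of $\tilde S_D$ equals a nonzero combination of the rows of the signed path-sum matrix underlying $W_D$. Assembling the entries and truncating to the rows $v_1,\dots,v_k$ yields $S_D=L\,W_D$ with $L$ invertible, the invertibility coming from the lower-triangularity of $\tilde S_D$ with nonzero diagonal; the non-identity part of $L$ is accounted for exactly by the restricted paths on $N_D$ that terminate at source-labelled sinks, which have no analogue in $M_D$ (where the source columns are the identity). Consequently $S_D$ and $W_D$ are row equivalent. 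Since $W_D$ is in reduced row echelon form and this form is unique within a row-equivalence class, $W_D$ is the unique reduced matrix row equivalent to $S_D$; and as \eqref{eqn:twparatransform} is a bijection between the $\alpha$-parameters and the $(a,c)$-parameters, every element of $\mathcal W_D$ is realised, which gives the asserted bijection $\mathcal S_D\to\mathcal W_D$.

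I expect the main obstacle to be the sign bookkeeping in the second and third steps: one must verify that the local signs $(-1)^{\rho(b)-\rho(\chi(b))}$ produced by the corner weights $-1/\alpha_{\chi(b)}$, summed over a whole restricted path and then over all paths from $s$ to $t$, reconcile globally with the single sign $(-1)^{b(s,t)}$ built into $W_D$, so that the two path sums agree as honest scalars rather than merely up to a term-by-term sign (this is what forces relations such as $a_4=-\alpha_4$ in the example). A secondary technical point is to confirm that the path correspondence remains a weight-respecting bijection at the boundary and under the truncation to the rows $v_1,\dots,v_k$, including the convention that a path starting at an internal vertex takes an unconstrained first step, and to handle the $\bullet$-cells—whose horizontal edges in $M_D$ carry $c_b$ and which may be skipped by $\chi$—on the same footing as $+$-cells in the telescoping.
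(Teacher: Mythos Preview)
Your overall framework is sound: writing $S_D=L\,W_D$ with $L$ the (unitriangular, hence invertible) submatrix of $S_D$ on the source columns would indeed establish row equivalence, and the uniqueness then follows from $W_D$ being in reduced row echelon form. The gap is in your proposed mechanism for proving this identity.

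The single-path bijection you describe simply does not exist. Restricted paths on $N_D$ from the west-boundary source labelled $v_s$ can only reach sinks with label at most $v_s$ (this is the lower-triangularity of $\tilde S_D$ you yourself noted), whereas paths in $M_D$ from the source $v_s$ reach only sinks with label at least $v_s$. In the paper's running example, $(\tilde S_D)_{4,5}=(\tilde S_D)_{4,6}=0$ while $(W_D)_{1,5}=a_7a_8$ and $(W_D)_{1,6}=a_7(a_8a_9+a_8a_3+c_5a_3)$ are nonzero: there are \emph{no} restricted paths from source $4$ to sink $5$, but there \emph{are} $M_D$-paths from $v_1$ to sink $5$. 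So your sentence ``this sets up a bijection between restricted paths from $s$ to $t$ on $N_D$ and directed paths from $s$ to $t$ on $M_D$'' is false, and the telescoping-of-corner-weights argument built on it collapses. (Relatedly, your description of $\circ$-cells as ``transparent, enter-top/exit-bottom'' contradicts \cref{resconstraints}, though that is not the essential issue.)

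What the paper does instead is compare the two matrices at the level of \emph{minors} rather than entries. After row-reducing $S_D$, the entries of its first row are Pl\"ucker coordinates $\Delta_{\{v_2,\dots,v_k,h\}}(S_D)$; by the Lindstr\"om--Gessel--Viennot lemma these expand as signed sums over \emph{systems of $k$ non-intersecting restricted paths} on $N_D$ with sinks $\{v_2,\dots,v_k,h\}$. The key combinatorial step (\cref{bijection}) is a bijection between such $k$-tuples of non-intersecting restricted paths and \emph{single} paths in $M_D$ from $v_1$ to $h$, under which the $\mathrm{wt}'$ of the system matches $(-1)^\varrho\,\mathrm{wt}_{TW}$ of the path. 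This is the step your proposal is missing: the object on the $N_D$ side that corresponds to one $M_D$-path is a full non-intersecting family, not one restricted path. Once the first rows agree, an induction on $k$ (strip the top row of the diagram) finishes the argument.
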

    The proof of the above lemma uses induction on $k$, while keeping $n-k$ constant. Note that the matrices in $\mathcal{W}_D$ are in row-reduced echelon form, so to get a unique representative of $S_D$ in $\mathcal{W}_D$, we row-reduce it!  When we row reduce $S_D$ to get $W_D$, the entries of the first row after the row reduction can be described using Pl\"{u}cker coordinates of $S_D$. Using Lindstr{\"o}m-Gessel-Viennot lemma \cite{lgv}, we can write these entries as a sum of non-intersecting restricted paths. Since in $W_D$ these entries correspond to paths on the network $M_D$, the lemma below produces a weight-preserving bijection if the parameters in both $M_D$ and $N_D$ are connected using \cref{eqn:twparatransform}. In the following lemma, we use $\mathrm{wt}_{TW}$ to refer to the weights on the paths on the network $M_D$. For the lemma below, we number the rows from $1$ to $k$, starting from the top, and the columns from $1$ to $n-k$, starting from the left.
    \begin{lemma}\label{bijection}
        Let the labels for the sinks in $M_D$ given by $u_1<u_2<...<u_{n-k}$. Let $v_1<v_2<...<v_k$ be the labels for the sources in $M_D$. Let $h\in \{u_1, u_2,...,u_{n-k}\}, h>v_1$. Let $\mathcal{P}_h$ be the set of non-intersecting systems of restricted paths that start at the sources on the west boundary of $N_D$, and the sinks are given by $\{v_2,v_3,...,v_k, h\}$. Let $P = (p_1, p_2,..., p_k)\in \mathcal{P}_h$, $p_i$ is a restricted starting from the source in row $i$ from the top and $(d_1,d_2,...,d_k)$ be the corresponding sinks for $P$ ($d_i$ is the sink for $p_i$). Let $\tau_P \in \mathfrak{S}_k$ be the permutation such that $d_{\tau_P(i)} = v_i$ for $1<i\leq k$, and $d_{\tau_P(1)} = h$. Let $\varrho$ be the number of sources strictly between $v_1$ and $h$ in $M_D$, and $\mathcal{Q}_h$ is the set of all paths from $v_1$ to $h$ in $M_D$. 
        
        Now if the parameters for $M_D$ are obtained from parameters in $N_D$ using \cref{eqn:twparatransform}, then there is a bijection $f:\mathcal{P}_h\to \mathcal{Q}_h$ such that 
        \begin{equation}\label{eqn:weightbijection}
            \mathrm{sign}(\tau_P)\mathrm{wt}'(P) = (-1)^\varrho\mathrm{wt}_{TW}(f(P))
        \end{equation}
        for all $P\in \mathcal{P}_h$
    \end{lemma}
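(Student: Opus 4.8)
The plan is to prove the lemma by an explicit construction of $f$ followed by a direct weight-and-sign computation, rather than by passing through determinants; the determinantal picture (via the Lindström–Gessel–Viennot lemma) is what guarantees that the two sums obtained from the two sides of \eqref{eqn:weightbijection} agree, but the refinement to a termwise bijection is combinatorial. The starting point is a structural description of the systems in $\mathcal{P}_h$. By \cref{resconstraints} every restricted path in $N_D$ is a monotone (rightward/downward) lattice path, so a non-intersecting system $P=(p_1,\dots,p_k)$ is planar; since its sink set $\{v_2,\dots,v_k,h\}$ differs from the set of default sinks $\{v_1,\dots,v_k\}$ reached by following the $k$ pipes without jumping only in that $v_1$ is replaced by $h$, non-crossing forces all but one path of $P$ to be the trivial pipe reaching its default sink, while the remaining path is a single ``detour'' carrying the source whose default sink is $v_1$ to the sink $h$. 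I would first establish this dichotomy carefully, as both $f$ and its inverse rely on it.

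Given this, I would define $f(P)$ by collapsing $P$ to its detour: the cells at which the detour jumps between pipes are exactly the cells of $\lambda$ carrying $+$ or $\bullet$, and reading them in order from the source side towards $h$ records the sequence of internal vertices of a path in $M_D$ from the source $v_1$ to the sink $h$. Conversely, given $Q\in\mathcal{Q}_h$, its internal vertices prescribe the jumps of a detour, and the rest of the system is filled in by trivial pipes; non-crossing makes this completion unique, so the two assignments are mutually inverse and $f$ is a bijection. The content here is the local dictionary between a left/down step of $Q$ across an internal vertex $b$ of $M_D$ and a jump of the detour at the cell $b$, which follows from the description of the admissible jumps in \cref{def:respaths} together with \cref{resconstraints}.

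It then remains to verify the weight identity \eqref{eqn:weightbijection}. Under the substitution \eqref{eqn:twparatransform} each weight $a_b$ or $c_b$ along $Q$ is a ratio $\pm\alpha_b/\alpha_{\chi(b)}$, and I would show that the product $\mathrm{wt}'(P)$ of the corner weights over the whole system, once the contributions of the trivial pipes are taken into account, telescopes to $\prod_b(\pm\alpha_b/\alpha_{\chi(b)})=\pm\,\mathrm{wt}_{TW}(f(P))$: the $\alpha$'s at cells where the detour runs alongside a trivial pipe cancel, and each jump site of the detour contributes precisely one factor $\alpha_b/\alpha_{\chi(b)}$. This is the bookkeeping that makes the chosen form of \eqref{eqn:twparatransform} natural.

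The step I expect to be the main obstacle is the sign. One must show that the sign $\mathrm{sign}(\tau_P)$, the factors $-1$ coming from the upper-right corner weights $-1/\alpha_b$ traversed in $N_D$, and the exponents $(-1)^{\rho(b)-\rho(\chi(b))}$ built into \eqref{eqn:twparatransform} assemble into exactly the global factor $(-1)^\varrho$ on the right-hand side. I would control this by tracking how $\rho$ changes along the detour: each time the image path in $M_D$ passes to a new row it crosses one source line, changing $\rho$ by one and producing a sign, and the number of such crossings between $v_1$ and $h$ is $\varrho$. Simultaneously I would relate $\mathrm{sign}(\tau_P)$ to the ordering of the detour's endpoints, observing that because $\tau_P$ is defined with $h$ assigned to the first index, it differs from the naive source-to-sink matching permutation by the cycle moving $h$ past the $\varrho$ sources strictly between $v_1$ and $h$, which contributes the compensating $(-1)^\varrho$. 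Reconciling these two accountings of $(-1)^\varrho$ is the delicate point, and I would carry it out by induction along the detour so that the partial products match step by step.
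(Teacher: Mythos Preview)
Your proposal rests on a structural claim that is false: it is \emph{not} true that in every $P\in\mathcal{P}_h$ all but one path is the straight horizontal path to its default sink $v_j$. Recall from \cref{resconstraints} that a restricted path entering a cell from the top is forced to go \emph{right} if that cell carries a $+$. Thus when the ``detour'' comes down into row $i+1$ at a $+$ cell, it cannot continue downward; it is shunted right and must terminate at $v_{i+1}$. Non-intersection then forces $p_{i+1}$ to drop down to the left of that column and carry the detour onward. The detour is therefore a relay among several $p_j$'s, not a single path, and whenever such a hand-off occurs the permutation $\tau_P$ picks up a transposition. (This is exactly why $\mathrm{sign}(\tau_P)$ appears in \eqref{eqn:weightbijection}; under your picture $\tau_P$ would always be the identity.)

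Because of this, your definition of $f$ as ``record the jump sites of the single non-trivial path'' does not make sense, the proposed inverse (fill in the rest by trivial horizontals) will in general produce intersecting systems, and the sign bookkeeping you outline cannot account for the transpositions coming from the relay. The correct construction is the one in the paper: process the rows one at a time, letting $t_i$ be the column where the (unique) path descending into row $i$ arrives; if that cell is a $+$ the active path hands off to $p_i$ (Case~1, contributing a swap to $\tau_P$), if it is $\circ$ or $\bullet$ the same path continues down (Case~2). The sequence $(t_1,\dots,t_{\varrho+1})$ is the $M_D$-path $Q$, and the weight identity follows because the $\alpha$-ratios telescope row by row while each Case~1 contributes one factor of $-1$ to both sides.
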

    \begin{proof}
        For $P=(p_1,...,p_k)$, we construct $f(P) = Q$ row wise. Any path in $\mathcal{Q}_h$ can be uniquely specified by the columns that it goes down in each row, and it will go down exactly $\varrho+1$ times. So given $P$, we are going to give a sequence $(t_1,...,t_{\varrho+1})$, $t_i\geq t_{i+1}$, and $Q$ goes down in column $t_i$ in row $i$. We will also calculate $\tau_P$ during this algorithm. We start by setting $\tau_P =\epsilon$. Now, since $p_1$ doesn't go to $v_1$, it has to go down in some column, and we assign $t_1$ to this column. At each row, the process will be similar; there is exactly one restricted path in our non-intersecting system of restricted paths which goes down from row $i$ to row $i+1$, and we will define $t_{i}$ to be the column that our unique restricted path uses to go down.

        We assume $t_i$ has been calculated, and we will calculate $t_{i+1}$. Since $v_2,...,v_{i}$ need to be sinks, exactly one of the restricted paths $p_1, p_2,...,p_i$ is coming down to the $(i+1)^{th}$ row. Let this restricted path be $p$, and we have chosen $t_i$ such that $p$ comes down to row $i+1$ in column $t_i$ ($p = p_1$ at the start of the algorithm). Now we have two cases:
        \begin{enumerate}
            \item There is a $+$ in row $i+1$ and column $t_i$. In this case, $p$ is forced to go right. Since $p_{i+1}$ can't intersect $p$, so it can't reach $v_{i+1}$. Moreover $p_{i+2},..., p_{k}$ can't reach $v_{i+1}$ as restricted paths can't go up. So $v_{i+1}$ is the sink for $p$, and $p_{i+1}$ is forced to go down at some column $t_{i+1} \leq t_i$. Note that since we have a $+$ in a row $i+1$ and column $t_i$, $Q$ can go left from this cell. Also since $p_{i+1}$ can only go down at either a $+$ or $\bullet$, so $Q$ can go down at column $t_{i+1}$ in row $i+1$. Let $p = p_l$. We update $p = p_{i+1}$ and we swap $i+1$ and $l$ in $\tau_P$.
            \item There is a $\circ$ or $\bullet$ in row $i+1$ and column $t_i$. In this case, $p$ is forced to go down, and since $v_{i+1}$ needed to be a sink for the restricted path, and only $p_{i+1}$ can reach it, $p_{i+1}$ goes right horizontally to $v_{i+1}$. In this case we set $t_{i+1}=t_i$, we don't change $p$ or $\tau_P$.
        \end{enumerate}
        This process stops when we reach the $(\varrho+1)^{th}$ row, and the restricted path goes to sink $h$. Clearly, $Q$ produced this way is a path from $v_1$ to $h$ in $M_D$.

        Now given $Q\in \mathcal{Q}_h$, we shall produce $P$ such that $P\in \mathcal{P}_h$. Let $t_1\geq t_2\geq ...\geq t_{\varrho+1}$ be such the $Q$ goes down in column $t_i$ at row $i$. We force $p_1$ to go down in column $t_1$. We shall move row-wise, starting at the second row, and at each row, we will have found one of the restricted paths in $P$ completely, and one restricted path will have been partially evaluated. Let us assume that we are in row $i+1$, and $i-1$ restricted paths in $P$ have been evaluated. Let $p$ be the partially evaluated path coming down to row $i+1$ in column $t_i$(for $i=1, p=p_1$, and no restricted path has been evaluated at this stage). In row $i+1$, we again have two cases:
        \begin{enumerate}
            \item There is a $+$ in row $i+1$ and column $t_i$. In this case, we force $p$ to go to the sink $v_{i+1}$, hence completing the restricted path. We start evaluating $p_{i+1}$, and we make it go down in column $t_{i+1}$. Since $Q$ could only go down if we are at a cell with $+$, or $\bullet$, $p_{i+1}$ can go down at column $t_{i+1}$. We update $p = p_{i+1}$.
            \item There is a $\circ$ or $\bullet$ in row $i+1$ and column $t_i$. In this case $Q$ is forced go down, so $t_{i+1}= t_i$. We continue extending the restricted path $p$ downwards, and we send $p_{i+1}$ directly to $v_{i+1}$.
        \end{enumerate}
        We will stop at row $\varrho+1$, when $Q$ reaches $h$. The path that is $p$ at this point is the one that has $h$ as its sink. Restricted paths $p_{\varrho+1}, ...., p_k$ are sent horizontally to $v_{\varrho+1},...,v_k$ respectively. It is clear that $f(P) =Q$, and hence $f$ is a bijection.
        
        Now we will show \cref{eqn:weightbijection}. We note that the weight of $P$ or $Q$ can be calculated row-wise again. For $i>1$, let $I_i\in \{1,2\}$ denote the case that was triggered to calculate $t_{i}$, let $b_{1,i}$ and $b_{2,i}$ denote the cells in row $i$ and column $t_i$ and $t_{i-1}$ respectively. Let $b_1$ denote the cell in first row and column $t_1$ For $1<i\leq \varrho+1$, define $r_i$ as follows:
        \begin{equation*}
	   r_i=\left\{\begin{matrix}
			1&\texttt{if }I_i=2\\
			-\frac{\alpha_{b_{1,i}}}{\alpha_{b_{2,i}}}&\texttt{if }I_i=1
            \end{matrix}\right.
        \end{equation*}
        We define $r_1 = \alpha_{b_1}$. Clearly $\mathrm{wt}'(P) = \prod_{i=1}^{\varrho+1}r_i$. For $Q$, we define $q_i$ for $1<i\leq \varrho+1$ as follows:
        \begin{equation*}
	   q_i=\left\{\begin{matrix}
			1&\texttt{if }t_i=t_{i-1}\\
			a_{b_{1,i}}\prod_ba_b&\texttt{if }b_{1,i} = + \texttt{ and if }t_i\not=t_{i+1}\\
                c_{b_{1,i}}\prod_ba_b&\texttt{if }b_{1,i} = \bullet\texttt{ and if }t_i\not=t_{i+1}\\
            \end{matrix}\right.
        \end{equation*}
        where $b$ are cells strictly between $b_{1,i}$ and $b_{2,i}$ in row $i$. Now if put the $a_b$'s and $c_b$'s in terms of $\alpha_b$'s using \cref{eqn:twparatransform}, we get a telescoping product:
        \begin{equation*}
	   q_i=\left\{\begin{matrix}
			1&\texttt{if }I_i=2\\
			(-1)^{\rho(b_{1,i})-\rho(b_{2,i})}\frac{\alpha_{b_{1,i}}}{\alpha_{b_{2,i}}}&\texttt{if }I_i=1
            \end{matrix}\right.
        \end{equation*}
        For the first row weight in $Q$, again in terms of $\alpha_b$'s we have a telescoping product, and we can assign $q_1 = (-1)^{\rho(b_1)}\alpha_{b_1}$ to get $\mathrm{wt}_{TW}(Q) = \prod_{i=1}^{\varrho+1}q_i$. Since $\mathrm{wt}'(P)$ (resp. $\mathrm{wt}_{TW}(Q)$) is a ratio of two monomials in $\alpha_b$'s, let $|\mathrm{wt}'(P)|$ (resp. $|\mathrm{wt}_{TW}(Q)|$) be that ratio where if have removed the sign of $\mathrm{wt}'(P)$ (resp. $\mathrm{wt}_{TW}(P)$). The discussion above gives us $|\mathrm{wt}'(P)| = |\mathrm{wt}_{TW}(Q)|$.

        Now if $c$ is the number of $i>2$ for which we have $I_i=1$, then $(-1)^c|\mathrm{wt}'(P)| = \mathrm{wt}'(P)$. Moreover, whenever $I_i=1$, we multiply $\tau_P$ by a transposition. So $\mathrm{sign}(\tau_P) = (-1)^c$. This gives us $\mathrm{sign}(\tau_P)\mathrm{wt}'(P) =|\mathrm{wt}'(P)|$. Now we define
        \begin{equation*}
            d= \rho(b_1) +\sum_{1<i\leq \varrho+1} \rho(b_{1,i})-\rho(b_{2,i}).
        \end{equation*}
        Since if $I_i =2$, $t_i = t_{i-1}$, we have $(-1)^d|\mathrm{wt}_{TW}(Q)| = \mathrm{wt}_{TW}(Q)$. Since every row introduces a source in $M_D$, we have $\rho(b_{1,i})-\rho(b_{2,i+1}) = 1$ for all $i>2$, and $\rho(b_1)- \rho(b_{2,2}) = 1$, and $\rho(b_{1,\varrho+1}) = 0$.
        \begin{align*}
            d &= \rho(b_1) +\sum_{1<i\leq \varrho+1} \rho(b_{1,i})-\rho(b_{2,i})\\
            &= \rho(b_1)-\rho(b_{2,2})+\sum_{i=2}^{\varrho}\rho(b_{1,i})-\rho(b_{2,i+1}) = \varrho
        \end{align*}
        giving us $(-1)^\varrho\mathrm{wt}_{TW}(Q) = |\mathrm{wt}_{TW}(Q)|$, proving equation \eqref{eqn:weightbijection}.
    \end{proof}
    \begin{example}
        Continuing \cref{ex:resnet}, here we have $v_1=1, v_2=2,$ and $v_3 = 3$. Let $h=6$. \cref{fig:bijection} illustrates the above bijection for our running example.
    \begin{figure}
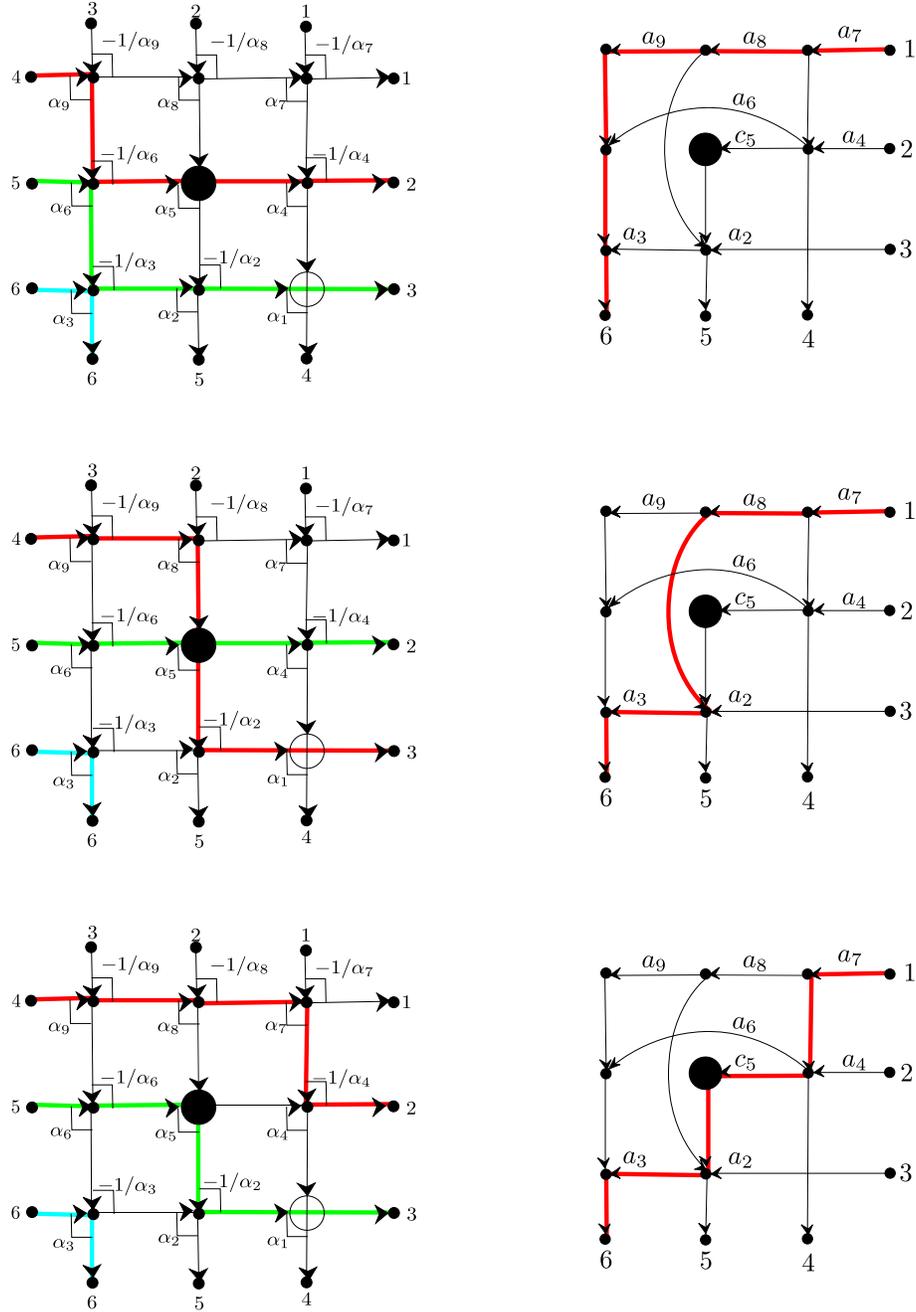

        \centering
        
        \includesvg[width=120mm, height=60mm]{parabijection1}

        \vspace{10mm}
        
        \includesvg[width=120mm, height=60mm]{parabijection2}
        
        \vspace{10mm}
        
        \includesvg[width=120mm, height=60mm]{parabijection3}
        \caption{Bijection described in \cref{bijection}}
        \label{fig:bijection}
    \end{figure}
    \end{example}
    
    \begin{proof}[Proof of \cref{twtransform}]
        We show that $W_D$ obtained from $\alpha_b$ parameters using equation \eqref{eqn:twparatransform} is row equivalent to $S_D$. We first note that $W_D$ is in row-reduced echelon form, whereas $S_D$ is not. Therefore, we will obtain the row-reduced echelon form of $S_D$. 
        
        We start with the first row. Let $s_i$ denote the $i^{th}$ row of $S_D$, let $\{e_i\}_{i\leq n}$ denote the standard basis for $\mathbb{F}^n$, and let $\{\overline{e}_i\}_{i\leq n}$ denote the basis dual to it. Let the labels for the sinks in $M_D$ given by $u_1<u_2<...<u_{n-k}$. Let $v_1<v_2<...<v_k$ be the labels for the sources in $M_D$. We define:
        \begin{equation}\label{rowtransform}
	s_1':=\sum_{i=1}^k\left((-1)^{(i-1)}s_i\cdot\left(\bigwedge_{j=2}^k\overline{e}_{v_j}\right)(s_1,s_2,...,s_{i-1}, s_{i+1},..., s_k)\right).
        \end{equation}
        We have for any $h\leq n$,
        \begin{align*}
        	\overline{e}_h(s_1')&= \sum_{i=1}^k\left((-1)^{(i-1)}\overline{e}_h(s_i)\cdot\left(\bigwedge_{j=2}^k\overline{e}_{v_j}\right)(s_1,s_2,...,s_{i-1}, s_{i+1},...,s_k)\right)\\
        	&=\overline{e}_h\wedge\left(\bigwedge_{j=2}^k\overline{e}_{v_j}\right)(s_1,s_2,...,s_k)
        \end{align*}
	where the last equation is obtained by noting that the sum is just the Laplace expansion of the determinant along the first row. Therefore, the entries of $s_1'$ are maximal minors of the matrix $S_D$.
        Now if $h\in \{v_2,...,v_k\}$ then we get $\overline{e}_h(s_1') = 0$. If $h = v_1$, then we have 
        \begin{equation*}
            \overline{e}_{v_1}(s_1')  = \left(\bigwedge_{j=1}^k\overline{e}_{v_j}\right)(s_1,s_2,...,s_k)=1
        \end{equation*}
        as restricted paths can't go up and completely horizontal restricted paths have $\mathrm{wt}'$ as 1, which means that the minor on the right side of the equation is the determinant of an identity matrix. For all $h<v_1$, $\overline{e}_h(s_1')$ is going to 0 as the $h^{th}$ column of $S_D$ is all 0. So in the row-reduced echelon form of $S_D$, $s_1'$ is going to the first row. If $w_1$ is the first row of $W_D$, then it remains to show that all the non-trivial entries of $w_1$ are the same as those of $s_1'$. In particular, we shall show $\overline{e}_h(s_1') = \overline{e}_h(w_1)$ for all $h\in \{u_1,..., u_{n-k}\}, h>v_1$.

        We can expand the minor in $\overline{e}_h(s_1')$ using Lindstr{\"o}m-Gessel-Viennot lemma as a sum of non-intersecting restricted paths. Let $\mathcal{P}_h$ be the set of all non-intersecting paths that show up in the expansion of the minor. If $P = (p_1, p_2,..., p_k)\in \mathcal{P}_h$ and $(d_1,d_2,...,d_k)$ be the corresponding sinks for $P$ ($d_i$ is the sink for $p_i$), then let $\tau_P$ be the permutation such that $d_{\tau_P(i)} = v_i$ for $i>2$, and $d_{\tau_P(1)} =h$. So by Lindstr{\"o}m-Gessel-Viennot lemma we have:
        \begin{equation*}
            \overline{e}_h(s_1') = \sum_{P\in \mathcal{P}_h} \mathrm{sign}(\tau_P)\mathrm{wt}'(P)
        \end{equation*}
        where we have $\mathrm{wt}'(P)$ as the product of the weight of all restricted paths in $P$. If $\varrho$ is the number of sources of strictly between $v_1$ and $h$ in $M_D$ and $\mathcal{Q}_h$ is the set of all paths from $v_1$ to $h$ in $M_D$, then we have 
        \begin{equation*}
            \overline{e}_h(w_1) = \sum_{Q\in \mathcal{Q}_h}(-1)^\varrho\mathrm{wt}_{TW}(Q)
        \end{equation*}
        where $\mathrm{wt}_{TW}$ is the weight of paths $M_D$. Then, using \cref{bijection}, we get $w_1 = s_1'$.

        Since we have shown that the first row of $W_D$ is the same as that of the row-reduced version of $S_D$, we can finish the rest of the proof using induction on $k$, while keeping $n-k$ constant. $k=1$ is done already, so let $k>1$. Let $D'$ be the diagram obtained by removing the first row of $D$. $D'$ is still a Go-diagram, now inside a $(k-1)\times (n-k)$ rectangle. In the construction of $M_{D'}$ or $N_{D'}$, we remove the boundary vertices labelled 1 and decrease the labels of other boundary vertices by 1. Note that the equations converting $a_b,c_b$'s to $\alpha_b$'s in \cref{eqn:twparatransform} were only dependent on the rows. So we can use the same equations to get $W_{D'}$. We have the following:
        \begin{equation*}
            \begin{matrix}

	       S_D= \left[\begin{matrix}
			    \multicolumn{2}{c}{s_1}\\
			0^{[k-1]\times 1} & S_{D'}
		  \end{matrix}\right],
            &

	       W_D= \left[\begin{matrix}
			\multicolumn{2}{c}{w_1}\\
			0^{[k-1]\times 1} & W_{D'}
		  \end{matrix}\right]

            \end{matrix}.
        \end{equation*}
        We do the row transformation of the first row of $S_D$ according to equation \eqref{rowtransform} to get:
        \begin{equation*}
            S_D'= \left[\begin{matrix}
			    \multicolumn{2}{c}{s_1'}\\
			0^{[k-1]\times 1} & S_{D'}
		  \end{matrix}\right].
        \end{equation*}
         Now, using the induction hypothesis, we get that $S_{D'}$ is row equivalent to $W_{D'}$, hence $S_D'$ is row equivalent to $W_D$, finally giving us that $S_D$ itself is row equivalent to $W_D$.
    \end{proof}
    We now describe how the two weight functions, $\mathrm{wt}$ and $\mathrm{wt}'$ on restricted paths are related to each other.
    \begin{definition}
        For pipe $p$ in the pipe dream, and two cells $b$ and $b'$ ($b'\in b_{in})$ that $p$ passes through, we define $p^{b,b'}$ as the restricted path that connects the internal vertices of $b$ and $b'$ and coincides with $p$ between $b$ and $b'$, that is, it is the segment of $p$ between $b$ and $b'$.

        If the index of $p$ is $i$, then we denote $p^{b,0}$ as the restricted path, which is the segment of $p$ connecting the internal vertex of $b$ with the boundary vertex on the southeast boundary indexed by $i$. We also denote $p^{0,b'}$ as the restricted path, which is the segment of $p$ connecting the boundary vertex $i$ on the northwest boundary with the internal vertex at $b'$.

        We shall attach a label to each $p^{b,b'}$ (or $p^{b,0}$, $p^{0,b'}$), which is going to be the same as the index of $p$.
    \end{definition}
    For a cell $b$, define $\prescript{-}{}{b}$ as the lower index pipe passing through $b$, and let $\prescript{+}{}{b}$ be the higher index pipe passing through $b$.
    \begin{lemma}\label{wttransform}
        Let $b$ be a cell in $D$. Let $t_b$ be:
        \begin{equation*}
            t_b = \left\{\begin{matrix}
            \frac{1}{\alpha_b}&\texttt{if\;} b \texttt{\; has\;}+\\
            \alpha_b&\texttt{if\;} b \texttt{\; has\;}\bullet
            \end{matrix}\right.
        \end{equation*}
        and let 
        \begin{equation}\label{eqn:jumptransform}
            \beta_b = \frac{t_b\mathrm{wt}'((\prescript{-}{}{b})^{b,0})}{\mathrm{wt}'((\prescript{+}{}{b})^{b,0})}
        \end{equation}
        Let $r$ be a restricted path from source $i$ to sink $j$, and let $s$ refer to pipe indexed $i$, then we have the following:
        \begin{equation}\label{eqn:restransform}
            \mathrm{wt}(r) = \frac{\mathrm{wt}'(r)}{\mathrm{wt}'(s)}
        \end{equation}
    \end{lemma}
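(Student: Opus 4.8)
The plan is to prove the multiplicative identity \eqref{eqn:restransform} by decomposing the restricted path $r$ into its maximal pipe-segments and telescoping the product $\mathrm{wt}(r)\,\mathrm{wt}'(s)$ against the definition \eqref{eqn:jumptransform} of the jump parameters. Suppose $r$ leaves the source on pipe $p_0=s$ and has jump sites $b_1,\dots,b_m$ in the order the path meets them, so that between $b_k$ and $b_{k+1}$ it runs along a single pipe $p_k$. Since a jump always moves to the lower-index pipe and can only occur on a horizontal entry, at each $b_k$ the pipe being left is the higher-index one and the pipe being taken is the lower-index one; that is, $p_{k-1}=\prescript{+}{}{b_k}$ and $p_k=\prescript{-}{}{b_k}$. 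Writing $\mathrm{wt}(r)=\prod_{k=1}^m\beta_{b_k}$ and substituting \eqref{eqn:jumptransform}, I would obtain
\begin{equation*}
\mathrm{wt}(r)\,\mathrm{wt}'(s)=\mathrm{wt}'\big((p_0)^{0,0}\big)\prod_{k=1}^m \frac{t_{b_k}\,\mathrm{wt}'\big((p_k)^{b_k,0}\big)}{\mathrm{wt}'\big((p_{k-1})^{b_k,0}\big)}.
\end{equation*}

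First I would record the key bookkeeping fact that $\mathrm{wt}'$ is multiplicative along a single pipe up to one corner correction: if a pipe $p$ passes through $b$ and then through a downstream cell $c$, then $\mathrm{wt}'(p^{b,0})=\mathrm{wt}'(p^{b,c})\,\varepsilon(p,c)\,\mathrm{wt}'(p^{c,0})$, where $\varepsilon(p,c)$ is the single corner weight of $p$ at $c$ (namely $\alpha_c$ for a left-to-down turn, $-1/\alpha_c$ for a top-to-right turn, and $1$ when $p$ runs straight through $c$). This is immediate once one fixes the convention that $\mathrm{wt}'$ of a segment records only the corners at cells strictly interior to it, so that the shared cell $c$ contributes to neither half and must be reinserted. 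Applying this to the tail of the lower pipe $p_k$ split at the next jump site $b_{k+1}$ makes the numerator of the $k$-th factor and the denominator of the $(k+1)$-th factor collapse to the intermediate segment $\mathrm{wt}'\big((p_k)^{b_k,b_{k+1}}\big)$ times the correction $\varepsilon(p_k,b_{k+1})$; the leading factor $\mathrm{wt}'((p_0)^{0,0})$ peels off the head $\mathrm{wt}'((p_0)^{0,b_1})$ in the same way, and the final numerator $\mathrm{wt}'((p_m)^{b_m,0})$ is the remaining tail. After the dust settles the whole product telescopes into the corner weights of $r$ along its segments (where $r$ coincides with a pipe strictly between consecutive jumps) multiplied, at each jump site $b_k$, by the factor $t_{b_k}\,\varepsilon(\prescript{+}{}{b_k},b_k)$, the correction always being carried by the upper pipe.

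It then remains to check the purely local identity that, at every jump site $b$, this factor equals the corner weight that $r$ itself traverses there, i.e.\ $t_b\,\varepsilon(\prescript{+}{}{b},b)=[\text{corner of }r\text{ at }b]$. Reading the two configurations off \cref{resconstraints}: at a $+$ cell the upper pipe enters from the left and turns down (so $\varepsilon(\prescript{+}{}{b},b)=\alpha_b$) while the jump goes straight left-to-right and contributes $1$, and indeed $t_b\,\alpha_b=(1/\alpha_b)\alpha_b=1$; at a $\bullet$ cell the upper pipe runs straight left-to-right (so $\varepsilon(\prescript{+}{}{b},b)=1$) while the jump turns left-to-down and contributes $\alpha_b$, and indeed $t_b\cdot 1=\alpha_b$. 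Both cases match, establishing \eqref{eqn:restransform}. The main obstacle is precisely this local analysis: one must extract the exact entry and exit directions of the upper and lower pipes at $+$ and $\bullet$ cells from \cref{resconstraints}, identify which corner weight each pipe and the jump itself pick up, and keep the endpoint convention for $\mathrm{wt}'$ of segments consistent throughout the telescoping so that every corner is counted exactly once; the values $t_b=1/\alpha_b$ and $t_b=\alpha_b$ are engineered for exactly this cancellation.
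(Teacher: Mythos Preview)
Your proposal is correct and follows essentially the same route as the paper. Both arguments break $r$ at its jump sites $b_1,\dots,b_m$, use that $p_{k-1}=\prescript{+}{}{b_k}$ and $p_k=\prescript{-}{}{b_k}$, and telescope the product $\prod\beta_{b_k}$ against $\mathrm{wt}'(s)$ via a one–step multiplicativity of $\mathrm{wt}'$ along a single pipe; the paper packages the local correction by splitting $t_b=u(b)v(b)$ (so that $u(b_{k})$ cancels the corner of $\prescript{+}{}{b_k}$ at $b_k$ and $v(b_k)$ is the corner of $r$ there), while you phrase the same cancellation as the identity $t_b\,\varepsilon(\prescript{+}{}{b},b)=[\text{corner of }r\text{ at }b]$ and verify it case by case for $+$ and $\bullet$.
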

    \begin{proof}
        Let the jump sites of $r$ be given by $b_1, b_2,...,b_l$, where $b_c\in (b_d)_{in}$ for all $d<c$. Therefore we have $\prescript{+}{}{b_1} = s$, and $\prescript{+}{}{b_i}= \prescript{-}{}{b_{i-1}}$. Define the following:
        \begin{equation*}
            u(b) = \left\{\begin{matrix}
            \frac{1}{\alpha_b}&\texttt{if\;} b \texttt{\;has\;}+\\
            1&\texttt{if\;} b \texttt{\;has\;}\bullet
            \end{matrix}\right.
        \end{equation*}
        \begin{equation*}
            v(b) = \left\{\begin{matrix}
            1&\texttt{if\;} b \texttt{\; has\;}+\\
            \alpha_b&\texttt{if\;} b \texttt{\; has\;}\bullet
            \end{matrix}\right.
        \end{equation*}
        Then we have the following:
        \begin{equation*}
            \mathrm{wt}'(r)=\mathrm{wt}'(s^{0,b_1})\cdot \left(\prod_{m=1}^{l-1}v(b_m)\mathrm{wt}'((\prescript{-}{}{b_m})^{b_m,b_{m+1}})\right)\cdot v(b_l)\mathrm{wt}'((\prescript{-}{}{b_l})^{b_l,0})
        \end{equation*}
        We also have the following for all $m\geq 0$ (use $b_0 =0$ and $(\prescript{-}{}{b_0})^{0,0} = s$):
        \begin{equation*}
            \mathrm{wt}'((\prescript{-}{}{b_m})^{b_m,b_{m+1}})=u(b_{m+1})\frac{\mathrm{wt}'((\prescript{-}{}{b_m})^{b_m,0})}{\mathrm{wt}'((\prescript{-}{}{b_m})^{b_{m+1},0})}.
        \end{equation*}
        Using the above, we get:
        \begin{align*}
            \mathrm{wt}'(r)&=\mathrm{wt}'(s^{0,b_1})\cdot \left(\prod_{m=1}^{l-1}v(b_m)\mathrm{wt}'((\prescript{-}{}{b_m})^{b_m,b_{m+1}})\right)\cdot v(b_l)\mathrm{wt}'((\prescript{-}{}{b_l})^{b_l,0})\\
             &= u(b_{1})\frac{\mathrm{wt}'(s)}{\mathrm{wt}'(s^{b_{1},0})}\cdot\left(\prod_{m=1}^{l-1}v(b_m)u(b_{m+1})\frac{\mathrm{wt}'((\prescript{-}{}{b_m})^{b_m,0})}{\mathrm{wt}'((\prescript{-}{}{b_m})^{b_{m+1},0})}\right)\cdot v(b_l)\mathrm{wt}'((\prescript{-}{}{b_l})^{b_l,0})\\
            &=\mathrm{wt}'(s)\cdot\prod_{m=1}^{l}v(b_m)u(b_{m})\frac{\mathrm{wt}'((\prescript{-}{}{b_m})^{b_m,0})}{\mathrm{wt}'((\prescript{+}{}{b_m})^{b_{m},0})}\\
            &=\mathrm{wt}'(s)\cdot\prod_{m=1}^{l}t_{b_m}\frac{\mathrm{wt}'((\prescript{-}{}{b_m})^{b_m,0})}{\mathrm{wt}'((\prescript{+}{}{b_m})^{b_{m},0})}\\
            &=\mathrm{wt}'(s)\cdot\prod_{m=1}^{l}\beta_{b_m}\\
            &= \mathrm{wt}'(s)\cdot \mathrm{wt}(r).
        \end{align*}
        Now, since there is a unique restricted path from source $i$ to $i$ which is $s$ itself, and this path never uses the parameters $\alpha_b$ where $b = \bullet$ in $D$, then we know that $\mathrm{wt}_2(s)\not=0$, as $\alpha_b\not=0$ for all cells $b$ with $+$. This proves \cref{eqn:restransform}.
    \end{proof}
    \begin{proof}[Proof of Theorem \ref{resparameterthm}]
        We will show that $S_D$ is row equivalent to $R_D$ if the $\beta_b$'s are obtained from $\alpha_b$'s using \cref{eqn:jumptransform}. First we note that if $b=\bullet$, then $\beta_b = 0 \iff \alpha_b$, and if $b=+$, then $\beta_b\not=0$, as the weights of $\prescript{-}{}{b}^{b,0}$ and $\prescript{+}{}{b}^{b,0}$ is non zero. Moreover, we can solve for $\alpha_b$ parameters in terms of $\beta_b$ parameters recursively, starting with cells on the southeast boundary of $\lambda$.

        Now in $\Tilde{S}(D)$, if we rescale all the rows by dividing all the entries in that row by the diagonal entry in that row, \cref{wttransform} shows that we get $\Tilde{R}(D)$. Now $R_D$ and $S_D$ are obtained by choosing the same rows in fixed order from $\Tilde{R}(D)$ and $\Tilde{S}(D)$ respectively, so we get that $S_D$ is row equivalent to $R_D$.
    \end{proof}
    
    We now look at various properties of this parametrization. We start by obtaining a simple product formula for $\Tilde{R}_D$. For $\beta\in\mathbb{F}$, let $X_{(i,j)}(\beta)$ be the matrix which has 1's on the diagonal, and the only non-zero off-diagonal entry is $\beta$, which is in row $i$ and column $j$. Note that for any $D$, $\Tilde{R}(D)$ is going to be a lower triangular matrix, because restricted paths always jump from a higher index pipe to a lower index pipe. This suggests that we should be able to write it as a product of $X_{(i,j)}(\beta)$ where $i>j$. The lemma below describes one such factorization:
    \begin{lemma}\label{prodlemma}
        For a Go-diagram $D$ inside partition $\lambda$, and the set of jump parameters $\{\beta_b\}_{b\in \lambda}$ satisfying the constraints in \cref{resparameterthm}, we have:
        \begin{equation}\label{eqn:productformula}
            \Tilde{R}_D = \prod_{b\in \lambda}X_{\sigma(b)}(\beta_b)
        \end{equation}
        where a reading order decides the ordering in the product on $\lambda$, cells with a higher index appear left of the ones with a lower index in the reading order.
    \end{lemma}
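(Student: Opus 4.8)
The plan is to prove the formula by induction on a prefix of the reading order, turning the matrix product into a running count of restricted paths whose jump sites are constrained to lie in an initial segment of cells. Order the cells of $\lambda$ as $b_1 \succ b_2 \succ \cdots \succ b_m$ in the reading order, so that $b_1$ carries the largest reading-word label and $b_m$ the smallest; by the convention in \cref{eqn:productformula} the product is then $\prod_{a=1}^{m} X_{\sigma(b_a)}(\beta_{b_a})$, read left to right. Writing each factor as $X_{(i,j)}(\beta) = I + \beta E_{ij}$ with $E_{ij}$ the matrix unit having a single $1$ in position $(i,j)$, and setting $M_p := \prod_{a=1}^{p} X_{\sigma(b_a)}(\beta_{b_a})$, I would prove the claim
\[
(M_p)_{st} = \sum_{P\colon s \to t} \mathrm{wt}(P),
\]
where the sum is over restricted paths all of whose jump sites lie in $\{b_1,\dots,b_p\}$. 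Taking $p = m$ recovers $\widetilde{R}_D$, since then every cell is permitted. The base case $p=0$ is immediate: $M_0 = I$, and the only restricted path with no jumps from source $s$ is the pipe $s$ itself, running to sink $s$ with weight $1$.

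For the inductive step I would multiply $M_p$ on the right by $X_{\sigma(b_{p+1})}(\beta_{b_{p+1}})$, with $\sigma(b_{p+1}) = (i,j)$. Since $E_{ij}$ has a single nonzero entry, $(M_{p+1})_{st} = (M_p)_{st} + \beta_{b_{p+1}}\,(M_p)_{si}\,[\,t=j\,]$. The first summand accounts for the paths that do not jump at $b_{p+1}$, namely those already counted by $M_p$. The second summand must be matched with the paths from $s$ to $t=j$ that do jump at $b_{p+1}$. The heart of the argument is the decomposition of such a path as a concatenation: an initial restricted path from $s$ to sink $i$ with all jump sites in $\{b_1,\dots,b_p\}$, followed by the jump $i \to j$ at $b_{p+1}$, followed by the straight run along pipe $j$ to sink $j$. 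Under this concatenation the weight factors as $\mathrm{wt}(\text{initial})\cdot \beta_{b_{p+1}}$, matching $\beta_{b_{p+1}}(M_p)_{si}$, so establishing that the concatenation is a weight-preserving bijection completes the step.

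The key geometric input, which I would isolate as a preliminary observation, is that a restricted path traverses its jump sites in strictly decreasing reading order: every unit rightward or downward step strictly decreases the reading-word label (reading words decrease left to right and top to bottom), and restricted paths move only right and down. Consequently $b_{p+1}$, having the smallest label in $\{b_1,\dots,b_{p+1}\}$, is necessarily the last jump of any admissible path that uses it, which is exactly what lets me peel it off as the rightmost factor. The main obstacle is the careful verification of the concatenation bijection at the gluing point $b_{p+1}$: I must check that the pipe exiting the left of $b_{p+1}$, namely $i = \sigma(b_{p+1})_1$, is precisely the pipe on which the initial segment enters $b_{p+1}$ from the left, and, conversely, that every restricted path from $s$ to sink $i$ with jumps in $\{b_1,\dots,b_p\}$ actually passes through the left edge of $b_{p+1}$ on pipe $i$, so that the jump can legitimately be appended. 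Both facts follow from the reading-order monotonicity: since $b_{p+1}$ lies on pipe $i$ and has a smaller label than the final jump of the initial segment, it must lie southeast of that jump along pipe $i$, so the initial path is running on pipe $i$ exactly when it reaches $b_{p+1}$, entering from the left as required. Assembling these observations yields \cref{eqn:productformula}.
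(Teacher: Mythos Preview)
Your argument is correct and genuinely different from the paper's. The paper fixes one specific reading order (columns filled left to right, each column top to bottom), observes that within a single column the factors multiply to $I + \sum_{b}\beta_b E_{\sigma(b)}$ because $E_{\sigma(b)}E_{\sigma(b')}=0$ for $b$ strictly above $b'$, and then multiplies these column sums together, matching nonzero products $E_{\sigma(b)}E_{\sigma(b')}$ with the jump condition $\prescript{-}{}{b}=\prescript{+}{}{b'}$; afterwards it proves invariance under change of reading order by checking that the two factors at a pair of diagonally adjacent cells commute. Your prefix induction instead peels off one cell at a time and uses the single global observation that jump sites along a restricted path occur in strictly decreasing reading label. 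The payoff is that your argument works uniformly for every reading order at once, so you never need the separate commutation-invariance step; the cost is that the bijection at the inductive step requires the slightly delicate check that every admissible path from $s$ to sink $i$ really does meet $b_{p+1}$ through its left edge, which you handle via the total order on cells of a single pipe. One small omission: in the backward direction you implicitly assume the path has at least one jump when locating ``the final jump of the initial segment''; the no-jump case $s=i$ should be noted explicitly, though it is immediate since pipe $i$ itself passes through $b_{p+1}$.
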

    \begin{proof}
        We shall prove \cref{eqn:productformula} for a particular reading order first. In our reading order, the leftmost column gets the biggest entries, starting from the top, and we move towards the right until we label all the cells. \cref{fig:readingwordprod} gives an example of this for a particular partition. Let $E_{(i,j)}$ denote the matrix of all 0's except at row $i$ and column $j$ where we have 1. Therefore we have $X_{(i,j)}(\beta) = 1+\beta E_{(i,j)}$. For a column $c$ in $\lambda$, we have:
        \begin{equation*}
            X(c) := \prod_{b\in c} X_{\sigma(b)}(\beta_b) = \prod_{b\in c} (1+\beta_b E_{\sigma(b)}) = 1+\sum_{b\in c}\beta_b E_{\sigma(b)}
        \end{equation*}
        as $E_{\sigma(b)}E_{\sigma(b')} = 0$ if $b$ is strictly above $b'$. Moreover, if $b'$ is in a column right next to that of $b$ and both $b$ and $b'$ don't have white stones, then $E_{\sigma(b)}E_{\sigma(b')} \not=0 \iff \prescript{-}{}{b} = \prescript{+}{}{b'}$. Let $C$ be the set of all columns in $\lambda$, and $R$ be the set of all restricted paths in $D$ from sources to sinks. For a restricted path $r\in R$, let $\sigma(r) = (i,j)$ where $i$ and $j$ are the source and sink respectively for $r$. Then we have the following:
        \begin{align*}
            \prod_{b\in \lambda}X_{\sigma(b)}(\beta_b)&= \prod_{c\in C}X(c)\\
            &= \prod_{c\in C}\left(1+\sum_{b\in c}\beta_b E_{\sigma(b)}\right)\\
            &= \sum_{r\in R}\mathrm{wt}_1(r)E_{\sigma(r)}\\
            &= \Tilde{R}_D.
        \end{align*}

        Now we note that any two reading orders are related to each other by a sequence of swaps, where we are swapping entries like $b_2$ and $b_3$ as in \cref{fig:readingwordprod}. In this case no matter what is configuration in each cell, we always have $\{\prescript{-}{}{b_2}, \prescript{+}{}{b_2}\}\cap \{\prescript{-}{}{b_3},\prescript{+}{}{b_3}\} = \emptyset$, hence $X_{\sigma(b_2)}(\beta_{b_2})X_{\sigma(b_3)}(\beta_{b_3})=X_{\sigma(b_3)}(\beta_{b_3})X_{\sigma(b_2)}(\beta_{b_2})$, therefore the product in \cref{eqn:productformula} is invariant of reading order.
    \end{proof}
    \begin{figure}
        \centering
        \includesvg[width=75mm, height=35mm]{readingwordprod} 
        \caption{Example of reading word for \cref{prodlemma} (left), commuting cells in $\lambda$ (right)}
        \label{fig:readingwordprod}
    \end{figure}
    \begin{example}\label{ex:product}
        Let $D$ be the Go-diagram from \cref{ex:respaths}. Here is the table that gives the jump coordinates at each cell:
        \begin{equation*}
            \sigma_D=\begin{array}{|c|c|c|}
            \hline
               (4,3)  & (3,2)&(2,1) \\
            \hline
               (5,4)  & (4,3)&(4,2)\\
            \hline
                (6,5)&(5,3)&(3,4)\\
            \hline
            \end{array}
        \end{equation*}
        Using \cref{prodlemma}, we get:
        \begin{equation*}
        \Tilde{R}_D = \begin{array}{ccc}
               X_{(4,3)}(\beta_9)\cdot  & X_{(3,2)}(\beta_8)\cdot&X_{(2,1)}(\beta_{7})\cdot \\
               X_{(5,4)}(\beta_{6})\cdot  & X_{(4,3)}(\beta_{5})\cdot&X_{(4,2)}(\beta_{4})\cdot\\
                X_{(6,5)}(\beta_{3})\cdot&X_{(5,3)}(\beta_2)\cdot&X_{(3,4)}(0)\\
            \end{array}
    \end{equation*}
    \end{example}
\subsection{Dual restricted paths}
    We have the isomorphism $Gr_{k,n}\simeq Gr_{n-k,n}$ with the bijection given as follows:
    \begin{equation}\label{eq:dual}
        V\in Gr_{k,n} \leftrightarrow \{ f\in (\mathbb{F}^n)^\star|f(v)=0 \mathrm{\;for\;all\;}v\in V\}
    \end{equation}
    where $(\mathbb{F}^n)^\star$ is the dual space of $\mathbb{F}^n$.
    
    Let the image of $V$ under this map be $V^\star$. Also let $D$ be the Go-diagram of shape $\lambda$ such that $V\in \mathcal{D}_D$, let $\{\beta_b\}_{b\in\lambda}$ be the parameters along which realize $V$ as a restricted path matrix on $D$. Finally, let $R_D$ be the restricted path matrix corresponding to $V$. In this subsection, we describe how to obtain $V^\star$ using the parameters $\{\beta_b\}_{b\in\lambda}$. We first "dualize" the restricted paths. We create source and sink vertices on the boundary of the Go-diagram, similar to how we did for \cref{def:respaths}.
    \begin{definition}[Dual restricted paths]
        A \textbf{dual restricted path} is a path between a source and sink vertex, which starts at a pipe and is allowed to jump between pipes at the cells of $\lambda$. The path has to satisfy one of the following rules when it enters a cell which has $+$ or a $\bullet$:
            \begin{enumerate}
                \item The path follows the pipe it was on.
                \item The path jumps to a pipe with the \textbf{higher} index.
            \end{enumerate}
         A \textbf{jump site} for the dual restricted path is a cell where the path changes pipes.
          \cref{fig:dualjumpcases} shows when the possible ways a restricted path can jump (the red coloured pipe has lower index). See \cref{ex:dualrespaths} for some examples of dual restricted paths.
          \begin{figure}[h]
        \centering
        \includesvg[width=90mm, height=30mm]{dualjumpcases}
        \caption{Possible ways for a restricted path to jump.}
        \label{fig:dualjumpcases}
    \end{figure}
    \end{definition}
    \begin{definition}
        We also define the \textbf{dual jump coordinate} $\sigma^\star_D(b)= rev(\sigma_D(b))$ for all cells $b$. Here $rev(a,b):=(b,a)$. We have defined it in this way to encode the jump conditions on the dual restricted paths. We attach the jump parameter $\beta_b^\star$ at cell $b$.
    \end{definition}
    The weight of the dual restricted path is given by the product of the jump parameters $\beta_b^\star$ at the jump sites of the path. 
    \begin{figure}
        \centering
        \includesvg[width=35mm, height=35mm]{rp_1}
        \includesvg[width=35mm, height=35mm]{drp_2}
        \caption{Examples of dual restricted paths}
        \label{fig:dualrespaths}
    \end{figure}
    The weight matrix of this network is:
    \begin{equation*}
        (\Tilde{R}_D^\star)_{st}=\sum_{P:s\to t}\mathrm{wt}(P)
    \end{equation*}
    where the sum runs over all dual restricted paths from source $s$ to sink $t$. Let $v_1,v_2,...,v_{n-k}$ be the labels that appear on the north boundary from \textbf{right to left}. Let $R_D^\star$ be a $k\times n$ matrix defined as follows:
    \begin{equation*}
        (R_D^\star)_{st} = (\Tilde{R}_D^\star)_{v_st}
    \end{equation*}
    \begin{example}\label{ex:dualrespaths}
        Let $D$ be the Go-diagram in \cref{fig:resnetwork} and the reading order on cells is given by \cref{fig:readingword1}. \cref{fig:dualrespaths} gives some examples of dual restricted paths on $D$. The one on the left is a pipe, so it has weight 1, the one on the right has weight $\beta_9^\star$. In this case,
        \begin{align*}
            \Tilde{R}_D^\star &= \left[
            \begin{matrix}
                1&\beta_7^\star& 0 &\beta_7^\star\beta_4^\star&0&0\\
                0&1&\beta_8^\star&\beta_8^\star\beta_5^\star+\beta_4^\star&\beta_8^\star\beta_2^\star&0\\
                0&0&1&\beta_9^\star+\beta_5^\star&\beta_9^\star\beta_6^\star+\beta_2^\star&\beta_9^\star\beta_6^\star\beta_3^\star\\
                0&0&0&1&\beta_6^\star&\beta_6^\star\beta_3^\star\\
                0&0&0&0&1&\beta_6^\star\\
                0&0&0&0&0&1
            \end{matrix}\right]\\
            R_D^\star&=\left[
            \begin{matrix}
                1&\beta_7^\star& 0 &\beta_7^\star\beta_4^\star&0&0\\
                0&1&\beta_8^\star&\beta_8^\star\beta_5^\star+\beta_4^\star&\beta_8^\star\beta_2^\star&0\\
                0&0&1&\beta_9^\star+\beta_5^\star&\beta_9^\star\beta_6^\star+\beta_2^\star&\beta_9^\star\beta_6^\star\beta_3^\star\\
            \end{matrix}\right]\\
        \end{align*}
        
    \end{example}
    Similar to how we had a product description for $\Tilde{R}_D$, we also have a product description for $\Tilde{R}_D^\star$, and the proof for it is similar to the proof of \cref{prodlemma}:
    \begin{lemma}\label{dualprodlemma}
        For a Go-diagram $D$ inside partition $\lambda$, and the set of jump parameters $\{\beta_b^\star\}_{b\in \lambda}$ satisfying the constraints in Theorem \ref{resparameterthm}, we have:
        \begin{equation}\label{eqn:dualproductformula}
            \Tilde{R}_D^\star = \prod_{b\in \lambda}X_{\sigma_D^\star(b)}(\beta_b^\star)
        \end{equation}
        where a reading order decides the ordering in the product on $\lambda$, cells with a higher index appear left of the ones with a lower index in the reading order.
    \end{lemma}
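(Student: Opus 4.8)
The plan is to follow the proof of \cref{prodlemma} almost verbatim, substituting the dual data $\sigma_D^\star$, $\beta_b^\star$ and dual restricted paths for their undualized counterparts. I would fix the same reading order used there (the leftmost column receives the largest labels from the top, proceeding rightward) and write $X_{\sigma_D^\star(b)}(\beta_b^\star) = 1 + \beta_b^\star E_{\sigma_D^\star(b)}$. The target identity \cref{eqn:dualproductformula} then follows once the ordered product over $\lambda$ is shown to equal $\sum_r \mathrm{wt}(r)\, E_{\sigma_D^\star(r)}$, the sum running over all dual restricted paths $r$ from sources to sinks, since this sum is by definition $\Tilde{R}_D^\star$.

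First I would collapse each column. For a column $c$, the claim is that $X^\star(c) := \prod_{b \in c} X_{\sigma_D^\star(b)}(\beta_b^\star) = 1 + \sum_{b\in c}\beta_b^\star E_{\sigma_D^\star(b)}$, which reduces to checking that $E_{\sigma_D^\star(b)} E_{\sigma_D^\star(b')} = 0$ whenever $b$ lies strictly above $b'$ in the same column. This is the mirror image of the vanishing used in \cref{prodlemma}, and since $\sigma_D^\star(b) = \mathrm{rev}(\sigma_D(b))$ merely transposes the two indices, it is a direct consequence of the index bookkeeping already established there.

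Next I would multiply the matrices $X^\star(c)$ from left to right across the columns. A product $E_{\sigma_D^\star(b)} E_{\sigma_D^\star(b')}$ with $b'$ in the column immediately to the right of $b$ is nonzero exactly when the higher-index pipe through $b$ coincides with the lower-index pipe through $b'$, that is, $\prescript{+}{}{b} = \prescript{-}{}{b'}$; under the transposition $\mathrm{rev}$ this is precisely the condition that a dual restricted path, which jumps to the \emph{higher} index, can thread from $b$ into $b'$ (the sink $\prescript{+}{}{b}$ of the dual jump at $b$ matching the source $\prescript{-}{}{b'}$ of the dual jump at $b'$). Consequently the expansion of $\prod_{c} X^\star(c)$ enumerates exactly the dual restricted paths, each monomial being the product of the jump parameters at the path's jump sites (its dual weight) placed in the entry $\sigma_D^\star(r)$. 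This establishes \cref{eqn:dualproductformula} for the chosen reading order.

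Finally, for reading-order invariance I would reuse the argument from \cref{prodlemma}: any two reading orders differ by a sequence of swaps of adjacent commuting cells $b_2, b_3$ as in \cref{fig:readingwordprod}, and for such cells $\{\prescript{-}{}{b_2}, \prescript{+}{}{b_2}\} \cap \{\prescript{-}{}{b_3}, \prescript{+}{}{b_3}\} = \emptyset$, so the matrices $X_{\sigma_D^\star(b_2)}(\beta_{b_2}^\star)$ and $X_{\sigma_D^\star(b_3)}(\beta_{b_3}^\star)$ commute and the product is unchanged. The main (and essentially only) genuine obstacle is verifying the two dual incidence claims about the $E$-matrices — vanishing within a column and the $\prescript{+}{}{b} = \prescript{-}{}{b'}$ criterion across adjacent columns — where one must confirm that transposing the coordinates via $\mathrm{rev}$ correctly interchanges the roles of ``lower'' and ``higher'' index so that the dual jump rule of the upper-triangular $\Tilde{R}_D^\star$ is reproduced; everything else transcribes directly from \cref{prodlemma}.
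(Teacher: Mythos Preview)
Your plan has a genuine gap at the very point you flag as ``the main obstacle'': the within-column vanishing $E_{\sigma_D^\star(b)}E_{\sigma_D^\star(b')}=0$ for $b$ strictly above $b'$ is \emph{false}. Since $E_{\sigma_D^\star(b)}=E_{\sigma_D(b)}^T$, this vanishing is equivalent to $E_{\sigma_D(b')}E_{\sigma_D(b)}=0$, which is the \emph{reverse} order from what \cref{prodlemma} establishes, and it does not hold. Concretely, take a column of two $+$ cells $b_1$ above $b_2$; writing $\sigma_D(b_1)=(q,p)$ and $\sigma_D(b_2)=(r,q)$ (so $p<q<r$), you get $\sigma_D^\star(b_1)=(p,q)$, $\sigma_D^\star(b_2)=(q,r)$, and $E_{(p,q)}E_{(q,r)}=E_{(p,r)}\neq 0$. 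Combinatorially this is exactly right: a dual restricted path can jump at \emph{both} $b_1$ and $b_2$, because dual paths have their free choice when entering a cell from the \emph{top}, and in a column each cell is entered from the top. (You can see the same failure in the running example: in the leftmost column $\sigma_D^\star$ reads $(3,4),(4,5),(5,6)$, and $E_{(3,4)}E_{(4,5)}\neq 0$.)

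The fix, and presumably what the paper has in mind by ``similar'', is to swap the roles of rows and columns: choose a \emph{row}-first reading order and collapse by rows. For $b$ strictly left of $b'$ in the same row one does have $E_{\sigma_D^\star(b)}E_{\sigma_D^\star(b')}=0$, since the pipe exiting left of $b$ cannot coincide with the pipe exiting top of $b'$ (monotonicity of pipes). Across adjacent rows your condition $\prescript{+}{}{b}=\prescript{-}{}{b'}$ is then the correct nonvanishing criterion, and the rest of your outline (interpretation as dual restricted paths, reading-order invariance via the commuting swaps of \cref{fig:readingwordprod}) goes through unchanged.
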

    Now we can show that we can realize $V^\star$ using $R_D^\star$.
    \begin{theorem}
        Let $V\in Gr_{k,n}$, $D$ be the Go-diagram such that $\mathcal{D}_D$, $\{\beta_b\}_{b\in \lambda}$ be the parameters for the restricted path matrix $R_D$ which realizes $V$ in the standard basis for $\mathbb{F}^n$. Then if we set $\beta_b^\star= -\beta_b$ for all $b\in \lambda$, then $R_D^\star$ realizes $V^\star$ in the basis dual to the standard basis.
    \end{theorem}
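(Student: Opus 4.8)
The plan is to collapse the entire statement into one clean matrix identity between the full $n\times n$ weight matrices $\Tilde{R}_D$ and $\Tilde{R}_D^\star$, and then read off the claim about the truncated matrices $R_D$ and $R_D^\star$ by a dimension count. First I would invoke the product formulas of \cref{prodlemma} and \cref{dualprodlemma}, which express both weight matrices as products of elementary transvections $X_{(i,j)}(\beta)=I+\beta E_{(i,j)}$ taken in the \emph{same} reading order on $\lambda$. Writing that reading order as $b_1,\dots,b_m$ (highest-index cell first) and $\sigma_D(b)=(i_b,j_b)$, these read $\Tilde{R}_D=\prod_{t=1}^m X_{(i_{b_t},j_{b_t})}(\beta_{b_t})$ and, using $\sigma_D^\star=\mathrm{rev}\circ\sigma_D$ together with $\beta_b^\star=-\beta_b$, $\Tilde{R}_D^\star=\prod_{t=1}^m X_{(j_{b_t},i_{b_t})}(-\beta_{b_t})$. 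The two facts I need are $X_{(i,j)}(\beta)^T=X_{(j,i)}(\beta)$ and, since the two pipes through a cell are distinct so that $E_{(i,j)}^2=0$, the relation $X_{(i,j)}(\beta)^{-1}=X_{(i,j)}(-\beta)$. Transposing the product for $\Tilde{R}_D^\star$ reverses the order and swaps each index pair, producing exactly $\prod_{t=m}^{1}X_{(i_{b_t},j_{b_t})}(-\beta_{b_t})=(\Tilde{R}_D)^{-1}$. Hence $(\Tilde{R}_D^\star)^T=(\Tilde{R}_D)^{-1}$, equivalently
\[
\Tilde{R}_D(\Tilde{R}_D^\star)^T=I_n.
\]

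The second step is pure bookkeeping on the boundary labels. The rows selected to form $R_D$ are indexed by the pipe labels on the west (vertical) part of the northwest boundary, while the rows selected for $R_D^\star$ are indexed by the labels on the north (horizontal) part; since every pipe exits at exactly one northwest edge, these two label sets partition $[n]$ and are in particular disjoint. Consequently the submatrix of the identity $\Tilde{R}_D(\Tilde{R}_D^\star)^T=I_n$ cut out by the $R_D$-rows and the $R_D^\star$-rows consists entirely of off-diagonal entries of $I_n$, so $R_D(R_D^\star)^T=0$.

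Finally I would translate this orthogonality into the annihilator statement. Under the standard pairing of $\mathbb{F}^n$ with $(\mathbb{F}^n)^\star$, the relation $R_D(R_D^\star)^T=0$ says that every row of $R_D^\star$, read in the dual basis, annihilates every row of $R_D$ and hence all of $V=\mathrm{rowspan}(R_D)$; thus $\mathrm{rowspan}(R_D^\star)\subseteq V^\star$. Because $\Tilde{R}_D^\star$ is unipotent and therefore invertible, the chosen rows of $R_D^\star$ are linearly independent, so $\dim\mathrm{rowspan}(R_D^\star)=n-k=\dim V^\star$, forcing equality. Therefore $R_D^\star$ realizes $V^\star$ in the dual basis, which is the assertion.

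I expect the only real content to be the observation that the transpose of the dual matrix equals the inverse of the primal one; once the product formulas are available this is immediate, and the disjointness and dimension arguments are routine. The single point requiring care is the treatment of the $\circ$-cells, where $\beta_b=\beta_b^\star=0$ and the corresponding factors are the identity in both products: these may be discarded without affecting the order-reversal in the transpose, which is why the identity holds even though $\sigma_D(b)$ need not satisfy $i>j$ at such cells.
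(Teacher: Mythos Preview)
Your proposal is correct and follows precisely the approach of the paper: the paper's proof consists of the single line that \cref{prodlemma} and \cref{dualprodlemma} yield $\Tilde{R}_D(\Tilde{R}_D^\star)^T=I$, and you have supplied exactly the details behind that identity (transpose reverses the product and swaps indices, $X_{(i,j)}(-\beta)=X_{(i,j)}(\beta)^{-1}$) together with the bookkeeping step from the full matrix identity to the truncated one, which the paper leaves implicit.
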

    \begin{proof}
        The above theorem follows directly from \cref{prodlemma} and \cref{dualprodlemma} which allow us to see that if $\beta_b^\star = -\beta_b$ for all $b\in \lambda$ we have:
        \begin{equation*}
            \Tilde{R}_D(\Tilde{R}_D^\star)^T = I
        \end{equation*}
        where $A^T$ denotes the transpose of matrix $A$.
    \end{proof}

\subsection{Pl\"{u}cker coordinates}
    Given a point $V\in \mathcal{D}_D$ where $D$ is a Go-diagram, we can write the Pl\"{u}cker coordinates using the Lindstr\"{o}m-Gessel-Viennot lemma on the Talaska-Williams network or the restricted path network. However, the summands appear to have no other structure beyond originating from non-intersecting paths on either network. Furthermore, if we want to check if a particular Pl\"ucker coordinate is nonzero, the only option we have is to guess a nonintersecting system of paths in either network. In this section, we will use the restricted path parametrization to give additional structure to the summands. In particular, we will be obtaining all the possible summands by performing a sequence of moves on the Go-diagram $D$, which constructs a graph out of the summands. This picture has an additional advantage, we also generate the summands in the Pl\"{u}cker coordinates for $V^\star$ as well. Furthermore, once we have constructed the graph obtained using these moves, it is straight forward to see if a Pl\"ucker coordinate is 0, we just need to see if our graph has vertices corresponding to the coordinate we are interested in. The key feature that we use in this section is that at a jump site, restricted paths (or dual restricted paths) behave as if the filling at that cell has been "toggled", i.e., a $+$ has been swapped with $\circ$ or vice versa. We first give an example to illustrate this idea.
    \begin{example}
        Let $D$ be the Go-diagram in \cref{fig:resnetwork}, $I=\{3,5,6\}$. The non-intersecting system of restricted paths for $\Delta_{356}(R_D)$ is shown in \cref{fig:nrp}. We "toggle" the jump sites of the restricted paths to obtain $(+,\circ)$ filling as in \cref{fig:rd}. Finally, looking at the pipes of these diagrams coming from the north boundary, we get the non-intersecting system of dual restricted paths for $\Delta_{124}(R_D^\star)$ in \cref{fig:dnrp}.
    \begin{figure}
            \centering
            \includesvg[width =82mm, height=50mm]{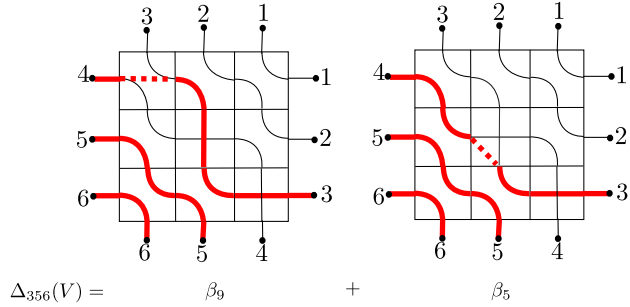}
            \caption{Non-intersecting system of restricted paths for $\Delta_{356}(R_D)$}
            \label{fig:nrp}
    \end{figure}
    \begin{figure}
            \centering
            \includesvg[width =82mm, height=50mm]{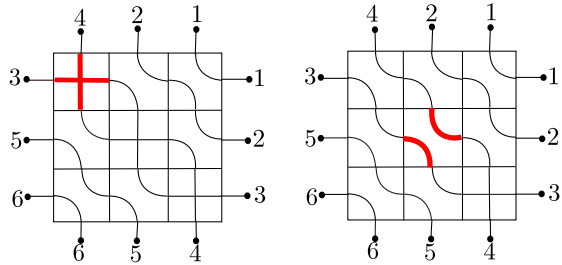}
            \caption{Restricted diagrams for $\Delta_{356}(R_D)$}
            \label{fig:rd}
    \end{figure}
    \begin{figure}
            \centering
            \includesvg[width =82mm, height=50mm]{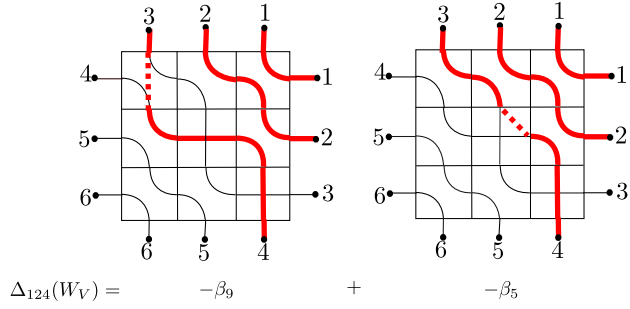}
            \caption{Non-intersecting system of dual restricted paths for $\Delta_{124}(R_D^\star)$}
            \label{fig:dnrp}
    \end{figure}
    \end{example}
    We first define the toggling move that we use in the above example.
     \begin{definition}[Togglable cell and Toggled cells] 
        We define this recursively. For a $(+, \circ)$ diagram $E$ with some toggled cells, we define a togglable cell $p$ as follows:
        \begin{enumerate}
    	\item Toggling the cell (i.e. putting $\circ$ in the cell instead of $+$ if $D$ had a $+$ there and vice versa) swaps $b$ and $c$ where $b$ is the label of pipe on a west boundary, and $c$ is a label on the north boundary.
    	\item Toggling the cell increases the length of the permutation in the diagram
    	\item There is no toggled cell $q$ that is also inside $p_{in}$
        \end{enumerate}
        Let $E^p$ be the diagram obtained from $E$ after toggling $p$. The toggled cells of $E^p$ are all the cells which were toggled in $E$ along with $p$.
    \end{definition}

    \begin{definition}[Restricted diagram]
        We start with $E_0=D$ and no cells are toggled. Here we will impose an additional restriction on the togglability of cells: all cells with a white stone in $D$ are \textbf{untogglable}. We will interpret $D$ as $(+,\circ)$ filling by decolouring the black stones. A diagram $E$ is a \textbf{restricted diagram} obtained from $D$ if there is a sequence $D=E_0, E_1, E_2,...,E_k=E$ where $E_{i+1}$ is obtained by toggling a togglable cell in $E_i$. 
    \end{definition}
    Let $\mathfrak{R}(D)$ (resp. $\mathfrak{R}^\star(D)$) be the set of all non-intersecting systems of restricted paths (resp. dual restricted paths) on $D$. Let $\mathfrak{D}(D)$ be the set of all restricted diagrams obtained from $D$. The main theorem of this subsection is:
    \begin{theorem}\label{thm:resdiagram}
        Let $D$ be a Go-diagram. We have bijections $f_1:\mathfrak{D}(D)\to \mathfrak{R}(D), f_2:\mathfrak{D}(D)\to \mathfrak{R}^\star(D)$, where for a restricted diagram $E$, $f_1(E)$ (resp. $f_2(E)$) are the pipes coming from the left (resp. north) boundary of the pipedream corresponding to $E$.
    \end{theorem}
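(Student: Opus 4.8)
The plan is to derive both bijections from one local dictionary between a \emph{jump} of a restricted path and a \emph{toggle} of a cell, and then to promote this dictionary to the global statement by an induction that keeps track of how the boundary labels are permuted. The dictionary is the principle emphasised just before the theorem: by \cref{resconstraints}, when a restricted path enters a cell $b$ it has exactly the options ``follow the pipe'' and ``jump'', and comparing these with the elbow/crossing behaviour of the tile shows that taking the jump at $b$ reroutes the path exactly as if the tile at $b$ had been switched, i.e. as if $b$ were toggled ($+ \leftrightarrow \circ$, with a decoloured $\bullet$ playing the role of $\circ$); dually, a jump of a \emph{dual} restricted path at $b$ reroutes it exactly as the same toggle read from the north. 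Given a non-intersecting system $P \in \mathfrak{R}(D)$, let $T(P)$ be the set of its jump sites; because a non-intersecting system uses each turning corner of the network at most once, every cell is a jump site of at most one path of $P$, so $T(P)$ is a well-defined set of cells, and I would define the candidate inverse of $f_1$ by letting $g_1(P)$ be the diagram obtained from $D$ by toggling all cells of $T(P)$, and symmetrically for $f_2$. The theorem then splits into showing that $g_1$ lands in $\mathfrak{D}(D)$ and that $f_1$ and $g_1$ are mutually inverse.

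First I would record the structural fact that does the real work: in any restricted diagram $E$ with toggled set $T$, each cell of $T$ is entered horizontally by exactly one pipe and vertically by exactly one pipe, the horizontal entrant obeys the primal jump rule of \cref{def:respaths} and exits on the west boundary, while the vertical entrant obeys the dual rule and exits on the north boundary. Granting this, the west pipes of $E$, read as paths on $D$, are genuine restricted paths (they jump to the lower-indexed pipe, since condition~2 forces each toggle to be a length-increasing crossing rather than an uncrossing), they are non-intersecting (distinct pipes of a pipe dream never share a turning corner, and transversal crossings are not intersections in the Lindstr\"om--Gessel--Viennot sense), and their jump set is \emph{exactly} $T$; the same statement for north pipes and dual restricted paths gives $f_2$. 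This is also what forces $f_1$ and $f_2$ to share the domain $\mathfrak{D}(D)$ and to read off the primal and dual Pl\"ucker summands from one and the same toggled diagram.

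I would prove the structural fact by induction on the length of a toggling sequence $D = E_0, \dots, E_m = E$. The base case is immediate, since the pipes of $D$ are the jump-free restricted paths. For the inductive step the decisive input is condition~1: toggling the final cell $p$ interchanges one west-boundary label with one north-boundary label, so among the two pipes through $p$ one was heading west and one north; crossing them at $p$ inserts a single new horizontal-entry jump on the westbound pipe and a single new vertical-entry jump on the northbound pipe, leaving every other pipe's routing on $D$ unchanged, while condition~3 guarantees $p$ is weakly north-west of all earlier toggles so that the new jump respects the south-east progression of a restricted path. The delicate part is that the destinations ``west'' and ``north'' are taken in the \emph{partially}-toggled diagram, so one must check they are not disturbed by the toggles still to come; this is where conditions~1 and~3 are used together, and verifying it is the main obstacle of the whole argument.

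For the converse I would take an arbitrary $P \in \mathfrak{R}(D)$, order the cells of $T(P)$ by a linear extension of $\prec$ that lists north-west cells first, and toggle them in that order, checking at each stage that the cell is togglable. Condition~3 is built into the chosen order; condition~1 (the swap is between a west and a north label, never between two west labels) is exactly the non-intersecting hypothesis on $P$ together with the monotonicity of restricted paths; and condition~2 (length increase) is the restricted-path rule that jumps go to the lower-indexed pipe, read through the configuration analysis of \cref{fig:configs}. Once this is verified we have $g_1(P) \in \mathfrak{D}(D)$, and $f_1 \circ g_1 = \mathrm{id}$ and $g_1 \circ f_1 = \mathrm{id}$ hold by construction, so $f_1$ is a bijection; $f_2$ is then obtained verbatim with west and north, and ``lower index'' and ``higher index'', interchanged throughout.
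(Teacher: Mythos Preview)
Your proposal follows essentially the same route as the paper: both directions are handled by induction on the number of toggled cells (equivalently, jump sites), the forward direction shows the west-exiting pipes of a restricted diagram are non-intersecting restricted paths by splitting the last-toggle pipe at $p$ and swapping its south-east tail with that of the north-exiting pipe, and the inverse toggles the jump sites of $P$ in an order compatible with condition~3. The paper packages your ``structural fact'' as a separate lemma (\cref{lemma:resconfig}), but the content is identical.

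One directional slip to correct: condition~3 says no toggled cell lies in $p_{in}$, and since $p_{in}$ consists of cells \emph{south-east} of $p$, the earlier toggles are north-west of (or incomparable to) $p$, not the reverse as you write. This is precisely what makes the arrangement inside $p_{in}$ agree with $D$, so that the south-east tails being swapped are literal pipe segments of $D$ and the new jump at $p$ is legal. Relatedly, ``a linear extension of $\prec$ that lists north-west cells first'' is a \emph{reverse} linear extension (north-west cells are $\prec$-maximal); the order you intend is the right one, only the name is off.
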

    The proof of the above theorem obtained by recognizing the fact that the jump sites of the restricted paths in a non-intersecting system correspond to the toggled cells in a restricted diagram $E$ (that is why we declared the cells with white stones in $D$ as untogglable as these cells can never be jump sites). Before proving the theorem, we show the following lemma:
    \begin{lemma}\label{lemma:resconfig}
        If $E$ is a restricted diagram obtained from $D$, and $p$ is a cell in $E$, with pipes coming to this cell labelled $b$(this pipe ends at the west boundary) and $c$(this pipe ends at the north boundary). Then condition (2) for togglable cells is equivalent to $c<b$. Moreover, if $p$ is togglable and $E\in \mathfrak{D}(D)$ and $p$ does not have a white stone in $D$, then $b$ is exiting $p$ from the west boundary of the cell, and $c$ is exiting the north boundary of the cell.
    \end{lemma}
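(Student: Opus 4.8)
The plan is to deduce both statements from two ingredients that I would treat separately: a global computation of how a single toggle changes the length of the permutation, and the purely local dictionary between the four pipe configurations of \cref{fig:configs} and the relative order of the two strands through a cell.

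For the equivalence in the first assertion I would argue as follows. The two pipes passing through $p$ are exactly the strands labelled $b$ and $c$, since these are the two pipes entering $p$. Toggling $p$ only inserts or deletes the crossing at $p$, and hence interchanges the two boundary endpoints of these strands while fixing every other strand; on permutations this replaces the permutation $v$ of the diagram by $t\cdot v$, where $t$ is the transposition of $b$ and $c$. By the standard length criterion for multiplying a permutation by a reflection, $\ell$ strictly increases under this operation precisely when $b$ and $c$ occur in non-inverted order in $v$. Since $b$ is the strand reaching the west boundary and $c$ the strand reaching the north boundary, I would fix the reading order of the northwest boundary (north edges before west edges) and check on the base all-elbow diagram that this non-inversion condition is exactly $c<b$. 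This gives condition~(2) $\Leftrightarrow c<b$.

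For the second assertion I would combine this with the local configuration analysis. Togglability of $p$ forces condition~(2), so by the first part $c<b$, i.e.\ $b$ is the higher-indexed of the two strands. Because $p$ carries no white stone in $D$ and is itself untoggled in $E$, its tile in $E$ is the elbow ($+$) or the (decoloured) crossing ($\bullet$) inherited from $D$, so the dictionary of \cref{fig:configs} applies. The inequality $c<b$ rules out configurations $B$ and $C$ (in which the higher-indexed strand exits north), leaving configuration $A$ when $p$ is an elbow and configuration $D$ when $p$ is a crossing. In both of these the higher-indexed strand exits through the left edge and the lower-indexed strand through the top edge: tracing the elbow tile (bottom-to-left, right-to-top) in configuration $A$, and the crossing tile (bottom-to-top, right-to-left) in configuration $D$, shows that $b$ leaves $p$ through its west edge and $c$ through its north edge, as claimed. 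Here condition~(3), together with the absence of configuration $B$ in the Go-diagram $D$, is what guarantees that the locally-determined strands at $p$ are consistent with the global west/north endpoints used in the first part.

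The step I expect to be the main obstacle is the careful orientation bookkeeping that underlies the configuration dictionary: one must fix once and for all the reading order of the northwest boundary, verify on the all-elbow diagram that ``un-inverted'' means the higher-indexed strand leaves through the west edge and the lower-indexed through the north edge, and confirm that a single toggle changes the length with the sign asserted above. Getting the direction of the inequality to match the sign of the length change across all four configurations of \cref{fig:configs}---and checking that the no-configuration-$B$ hypothesis is exactly what forbids the two cases where the matching would fail---is the delicate point on which the whole lemma turns.
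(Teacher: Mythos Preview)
Your first assertion is fine and matches the paper's one-line argument: toggling $p$ left-multiplies the permutation by the transposition $(b,c)$, and since $b$ lands on the west boundary and $c$ on the north boundary you have $\pi^{-1}(c)<\pi^{-1}(b)$, so the length increases exactly when $c<b$.

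The second assertion has a genuine gap in the logical order. You write that ``the inequality $c<b$ rules out configurations $B$ and $C$ (in which the higher-indexed strand exits north).'' But $c<b$ is a statement about where the two strands end on the \emph{diagram} boundary, whereas the $B$/$C$ dichotomy is about which strand exits through the north edge of the \emph{cell} $p$. Nothing prevents, a priori, a strand that exits cell-north from later reaching the diagram's west boundary; so $c<b$ by itself does not exclude $B$ or $C$, and the deduction is circular (you are effectively assuming cell-west $=$ diagram-west, which is what you are trying to prove).

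The paper fixes this by using condition~(3) \emph{first}, not as an afterthought. Condition~(3) says no toggled cell lies in $p_{in}$, so the entire region $p_{in}$---and hence the local configuration at $p$---is identical in $E$ and in $D$. Since $D$ is a Go-diagram and $p$ carries no white stone in $D$, that configuration is $A$ (if $+$) or $D$ (if $\bullet$); in either case the higher-labelled pipe exits cell-west. \emph{Only then} does the inequality $c<b$ from the first part enter, to identify $b$ as that higher-labelled pipe. Your final paragraph gestures at condition~(3) and the no-$B$ hypothesis, but you frame them as a consistency check rather than as the actual mechanism that pins down the configuration; reorder the argument so that condition~(3) establishes the configuration before you invoke $c<b$, and the proof goes through.
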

    \begin{proof}
        Toggling cell $p$ is equivalent to left multiplying the permutation corresponding to the pipedream of $E$ by the transposition swapping $b$ and $c$, which increases the inversion number of the permutation if and only if $c<b$. For the second half of the lemma, we proceed by induction on the number of toggled cells in $E$. For $E=D$, since $E$ is a Go diagram, and $p$ is not a white stone, the pipe exiting the west boundary of $p$ will have a smaller label. Now let $E$ have $k>0$ toggled cells. Therefore we have diagrams $D=E_0, E_1,..., E_k=E$ such that $E_{i+1}$ is obtained by toggling a togglable cell in $E_i$. Since $p$ is a togglable cell in $E$, no toggled cell is inside $p_{in}$. Therefore $(+,\bigcirc)$ arrangement inside $p_{in}$ is same for both $E$ and $D$. Therefore, the pipes exiting the west and east boundary of the $p$ in $E$ are the same as the ones in $D$ that is $b$ and $c$ respectively.
    \end{proof}
    \begin{proof}[Proof of \cref{thm:resdiagram}]
        Let $E$ be a reduced diagram obtained from $D$. We show that the pipes ending on the west boundary of the pipe dream corresponding to $E$ are a system of non-intersecting restricted paths (let's call them $P$), and the pipes ending at the north boundary of the pipe dream corresponding to $E$ are a system of non-intersecting dual restricted paths (lets call them $Q$). We shall show this by induction on the number of toggled cells in $E$. For $E=D$, the statement is obvious. So let $k>0$ cells have been toggled. Therefore we have diagrams $D=E_0, E_1,..., E_k=E$ such that $E_{i+1}$ is obtained by toggling a togglable cell in $E_i$. Let $p$ be the cell that was toggled in $E_{k-1}$ to obtain $E$. Let $b$ and $c$ be the labels of the pipes entering $p$ with $b>c$. From the \cref{lemma:resconfig} and the definition of a togglable cell, $b$ ends up at the west boundary, $c$ ends up at the north boundary in $E_{k-1}$. By induction, the west ending pipes of $E_{k-1}$ represent a non-intersecting restricted path system $P'$, and the north ending pipes represent a non-intersecting dual restricted path system $Q'$.
        
        Since $E\in \mathfrak{D}(D)$, we know that $D$ didn't have a white stone at $p$. Now, from the previous lemma, we have that the pipe labelled $b$, which is a restricted path in $P'$ (let this be $r_W$), is entering $p$ from the west boundary of the cell, while the pipe labelled $c$ is a dual restricted path in $Q'$ (let this be $r_N$) entering from the north boundary of the cell. Now let $r_W$ be subdivided as $r_W''pr_W'$ where $r_W''$ is part of the pipe $r_W$ between $p$ and the west boundary, and $r_W'$ is the part that comes between $p$ and the south-east boundary. Similarly, split the dual restricted path $r_N$ as $r_N=r_N''pr_N'$. Since $p$ was togglable in $E_{k-1}$, there is no toggled cell in $p_{in}$. So the $(+,\bigcirc)$ arrangement in $p_{in}$ is same for $E_{k-1}$ and $D$. Since $r_W',r_N'$ lie entirely in $p_{in}$, they are segments of pipe in $D$, and hence they satisfy the conditions for both restricted paths and dual restricted paths. Hence, the path $s_W=r_W''pr_N'$ is a restricted path, and $s_N=r_N''pr_W'$ is a dual restricted path (since $p$ didn't have a white stone, restricted and dual restricted paths can jump at $p$). Finally, note that $P$ can be obtained from $P'$ by removing $r_W$ as a restricted path and replacing it with $s_W$ (similarly, we replace $r_N$ with $s_N$ in $Q$). So $P$ is a system of restricted paths, $Q$ is a system of restricted paths, and they are non-intersecting as they are built off the pipes of $E$.
        
        Now we give the inverse of the above map, that is, given $P\in \mathfrak{R}(D)$ then we give $E\in \mathfrak{D}(D)$ with $f_1(E) =P$. We define $E$ as the diagram obtained by toggling all the cells in $D$ which are jump sites in the restricted paths of $P$. This rule is consistent as two restricted paths enter the same cell in a non-intersecting way if and only if neither of them jumps at this common cell, so no two restricted paths in $P$ share a jump site. Moreover, none of the cells with white stones in $D$ are going to be toggled as restricted paths are not allowed to jump at white stones. It is easy to see if $E\in \mathfrak{D}(D)$, then $f_1(E) = P$. To show $E\in \mathfrak{D}(D)$, we just need to produce a sequence of cells that, on toggling in $D$ to obtain $E$.
        
        We shall proceed by induction on the number of jump sites of restricted paths in $P$. If there were no jump sites, then we get $E=D\in \mathfrak{D}(D)$. Let the number of jump sites be $k>0$. Let $p$ be a jump site in $P$ such that there are no jump sites $p_{in}$. Now we can swap the parts between $p$ and the south-east boundary just as in the previous part of the proof, to obtain $P'\in \mathfrak{R}_D$ where $p$ is no longer a jump site. Using induction, we get a restricted diagram $E' \in \mathfrak{D}(D)$. Since at $p$ the restricted path in $P'$ had an option to jump without intersecting with other paths in the system, there is no other restricted path entering $p$ in $P'$. Hence, the other pipe entering $p$ in $E'$ corresponds to a dual restricted path, showing that $p$ satisfies condition(1) for togglability. Since $P$ is obtained from $P'$ by introducing a jump site, and since restricted paths only move to a pipe of higher label after toggling, we see that by \cref{lemma:resconfig} $p$ satisfies condition (2) of togglablity. Finally condition (3) is ensured by our choice of $p$. Hence $p$ is togglable in $E'$ and $E$ can obtained from $E'$ by toggling $p$, giving us that $E\in\mathfrak{D}(D)$. Inverse of $f_2$ is defined similarly.
    \end{proof}
    \begin{figure}
            \centering
            \includesvg[width =120mm, height=100mm]{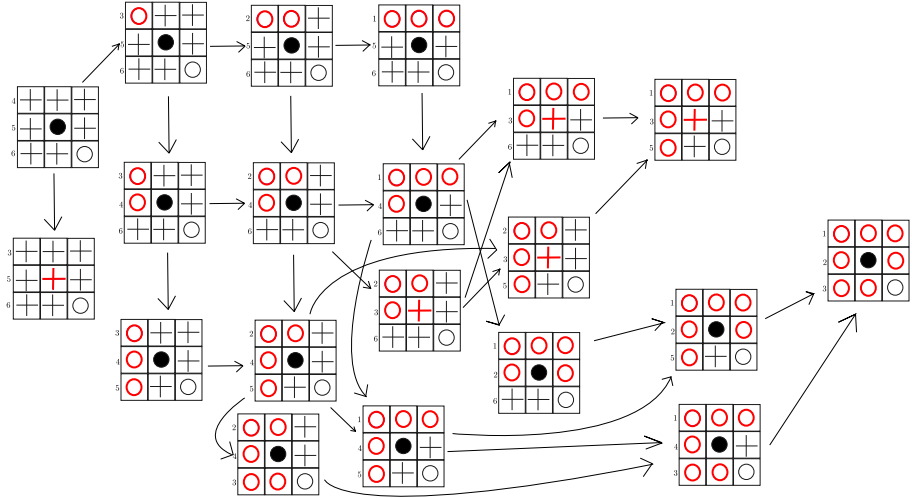}\caption{Graph generated by toggling moves on the Go-diagram $D$}
            \label{fig:toggling}
    \end{figure}
    \begin{example}
        Let $D$ be the Go-diagram from \cref{ex:respaths}. \cref{fig:toggling} shows the graph $\mathfrak{D}(D)$ generated by our toggling operations. The toggled entries are coloured in red.
    \end{example}
    For a restricted diagram $E\in \mathfrak{D}(D)$, let $(p_1,p_2,...,p_k)$ be the labels of the pipe on the west boundary read from top to bottom. Let $I_E=\{p_1,p_2,...,p_k\}$ and $\pi_E$ be the permutation such that $p_{\pi_{E}(1)}\leq p_{\pi_{E}(2)}\leq ....\leq p_{\pi_{E}(k)}$. Then we can write the Pl\"{u}cker coordinates as a sum of restricted diagrams:
    \begin{theorem}
        Let $D$ be Go-diagram in partition $\lambda\in \Lambda_{k,n}$ and $I\in \binom{[n]}{k}$. Then we have:
        \begin{equation*}
        \Delta_I(R_D) = \sum_{\substack{E\in\mathfrak{D}(D)\\I_E=I}}\mathrm{sign}(\pi_E)\mathrm{wt}(E)
        .
    \end{equation*}
    where $\mathrm{wt}(E)$ is the product of jump parameters on the toggled sites of $E$
    \end{theorem}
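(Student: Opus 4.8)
The plan is to apply the Lindstr\"om--Gessel--Viennot (LGV) lemma to $R_D$ and then transport the resulting signed sum over non-intersecting path systems to a sum over restricted diagrams using the bijection $f_1$ of \cref{thm:resdiagram}. Recall that $R_D$ is the submatrix of the restricted-path matrix $\tilde R_D$ consisting of the rows indexed by the west-boundary sources $v_1,\dots,v_k$ read top to bottom, and that $(\tilde R_D)_{st}$ is the $\mathrm{wt}$-generating function of restricted paths from source $s$ to sink $t$. Since a restricted path can only jump to strictly lower indices, $\tilde R_D$ is lower triangular and the underlying network is acyclic, so LGV applies directly. Writing $I=\{t_1<\dots<t_k\}$, this gives
\begin{equation*}
    \Delta_I(R_D)=\det\big((\tilde R_D)_{v_i,t_j}\big)_{1\le i,j\le k}=\sum_{P}\mathrm{sign}(\sigma_P)\,\mathrm{wt}(P),
\end{equation*}
where $P=(P_1,\dots,P_k)$ ranges over vertex-disjoint systems of restricted paths whose $i$-th path $P_i$ starts at source $v_i$ and whose sink set is exactly $I$, $\mathrm{wt}(P)=\prod_i\mathrm{wt}(P_i)$, and $\sigma_P$ is the permutation recording which sink each path reaches.

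Next I would invoke \cref{thm:resdiagram}, whose bijection $f_1\colon\mathfrak{D}(D)\to\mathfrak{R}(D)$ sends a restricted diagram $E$ to the family of pipes emanating from its west boundary; thus every $P$ in the LGV expansion equals $f_1(E)$ for a unique $E\in\mathfrak{D}(D)$. Two matchings must be verified. The sink of the $i$-th path $P_i$ of $f_1(E)$ is the label $p_i$ of the $i$-th west-boundary pipe of $E$, so the sink set of $f_1(E)$ is precisely $I_E$; hence the restriction ``$P$ has sink set $I$'' is equivalent to ``$I_E=I$''. For the weight, the proof of \cref{thm:resdiagram} shows that the jump sites occurring in the paths of $P=f_1(E)$ are exactly the toggled cells of $E$, and that no two paths share a jump site, so $\mathrm{wt}(P)=\prod_{b\text{ toggled}}\beta_b=\mathrm{wt}(E)$.

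The remaining point, which I expect to be the main technical obstacle, is the sign. The path $P_i$ of $P=f_1(E)$ ends at the sink $p_i\in I$ equal to the label of the $i$-th west-boundary pipe of $E$, so the LGV permutation $\sigma_P$ is determined by $p_i=t_{\sigma_P(i)}$; that is, $\sigma_P(i)$ is the rank of that label among the sorted labels of $I$. On the other hand $\pi_E$ is defined by $p_{\pi_E(1)}\le\dots\le p_{\pi_E(k)}$, so $\pi_E^{-1}(i)$ is exactly this rank, giving $\sigma_P=\pi_E^{-1}$ and hence $\mathrm{sign}(\sigma_P)=\mathrm{sign}(\pi_E)$. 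Substituting the weight and sign identities into the LGV expansion and reindexing the sum by $E=f_1^{-1}(P)$ subject to $I_E=I$ yields the claimed formula.

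Beyond the sign computation, the one subtlety requiring care is that the ``vertex-disjoint'' condition used by LGV coincides with the non-intersecting restricted-path systems parametrized by $\mathfrak{R}(D)$, and that every non-intersecting system with sources $v_1,\dots,v_k$ and sink set $I$ arises in this way. Both follow from the observation already used in the proof of \cref{thm:resdiagram}, namely that two restricted paths traverse a common cell disjointly precisely when neither of them jumps there; this guarantees that the LGV index set is exactly $\{P\in\mathfrak{R}(D):\ \text{sink set}(P)=I\}$, so that no spurious or missing terms appear and the reindexing is a genuine bijection.
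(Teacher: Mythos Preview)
Your proposal is correct and matches the paper's intended approach. The paper does not spell out a proof for this theorem: it is stated immediately after \cref{thm:resdiagram} as an evident consequence of that bijection together with the Lindstr\"om--Gessel--Viennot lemma, and you have supplied precisely that argument. Your sign identification $\sigma_P=\pi_E^{-1}$ and the observation that two restricted paths traverse a common cell in a vertex-disjoint manner exactly when neither jumps there (so that the LGV notion of non-intersecting agrees with the pipe-dream notion defining $\mathfrak{R}(D)$) are the two details one would want to see written out, and you have handled both correctly.
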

    We end the section with the following question:
    \begin{question}
        Given a Go-diagram $D$ inside a partition $\lambda$, and $E$ which is $(+,\circ)$ filling of $\lambda$, give a condition (necessary or sufficient) for when $E\in \mathfrak{D}(D)$, which doesn't require us to construct the entire graph on $\mathfrak{D}(D)$.
    \end{question}
    
\section{Closure of Deodhar components}\label{sec:closure}
In this section, we will investigate the closure relations between Deodhar components. For a Go-diagram $D$, denote $\pi(D)$ as the permutation associated with the pipedream corresponding to $D$. We first focus on the case when the permutations corresponding to the pipedreams of our Go-diagrams are the identity permutation: 
    \begin{theorem}\label{theorem:closurerelation}
         Let $D'$ be a Go-diagram, and let $c$ and $c'$ be two cells in $D'$ which form a crossing and uncrossing pair in $D'$; $c'$ is the cell with $\bullet$. Let $D$ be the diagram obtained after changing the stones at $c$ and $c'$ to $+$. Suppose $i,i+1$ are the two pipes which form the crossing at $c$ and $c'$ and also assume that $\pi(D')=1$.
        In this case we have that $D$ is a Go-diagram and $\mathcal{D}_{D'}\subset \overline{\mathcal{D}_D}$.
    \end{theorem}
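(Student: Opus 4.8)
The plan is to prove the two assertions separately. That $D$ is a Go-diagram is purely combinatorial, and I would settle it by showing that passing from $D'$ to $D$ creates no cell in configuration $B$. That $\mathcal D_{D'}\subset\overline{\mathcal D_D}$ I would prove by realizing each point of $\mathcal D_{D'}$ as a limit of points of $\mathcal D_D$, degenerating the jump parameter at $c$ and organizing the computation through the product formula of \cref{prodlemma}.

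For the first claim, note that pipes $i$ and $i+1$ cross at $c$ and uncross at $c'$ and are in default position everywhere else, so the only cells whose local picture changes are $c$, $c'$, and the cells of the bigon $B$ bounded by the two arcs of $i$ and $i+1$ between $c$ and $c'$; outside $B$ one even has $\sigma_D(b)=\sigma_{D'}(b)$. At $c$ the configuration passes from $C$ to $A$ and at $c'$ from $D$ to $A$, so neither becomes $B$. A cell $b\in B\setminus\{c,c'\}$ has exactly one pipe in $\{i,i+1\}$ and a second pipe $p\notin\{i,i+1\}$; going from $D'$ to $D$ relabels that strand $i\leftrightarrow i+1$ but moves no pipe. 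Since $i$ and $i+1$ are consecutive, $p$ lies below both or above both, so the comparison of the two indices at $b$—hence whether $b$ is inverted—is unchanged, and $b$ keeps its configuration. Thus no configuration $B$ is produced and $D$ is a Go-diagram; this argument does not use $\pi(D')=1$, consistent with the general statement \cref{thm:closemain}.

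For the closure I would send the jump parameter $\beta_c$ to $0$ (the value it is frozen at in $D'$, where $c$ is $\circ$) while simultaneously adjusting the remaining parameters. Writing $\tilde R_D=\prod_b X_{\sigma_D(b)}(\beta_b)$ and the analogous $\tilde R_{D'}$, the two products agree on every factor from a cell outside $B$, while on $B$ their jump coordinates differ only by $i\leftrightarrow i+1$. Choosing a reading order in which the bigon cells fall between $c$ and $c'$, the task reduces to matching the finite block $X_{(i+1,i)}(\beta_c)\,\mathcal B_D\,X_{(i+1,i)}(\beta_{c'})$ of $D$ with the block $\mathcal B_{D'}\,X_{(i+1,i)}(\beta_{c'})$ of $D'$ in the limit $\beta_c\to0$, which I would do by commuting $X_{(i+1,i)}(\beta_c)$ across $\mathcal B_D$ using the Chevalley relations $X_{(i+1,i)}(a)\,X_{(i,p)}(b)=X_{(i,p)}(b)\,X_{(i+1,p)}(ab)\,X_{(i+1,i)}(a)$ and solving for the induced (rational, invertible) change of the interior parameters. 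A short check shows that holding the other parameters fixed does \emph{not} land in $\mathcal D_{D'}$—the rule ``jump to the lower index'' points the opposite way through the bigon in $D$ and in $D'$—so the co-variation is essential; once it is pinned down, $\beta_{c'}$ also ranges over all of $\mathbb F$ in the limit (including the value $0$ absent from $\mathcal D_D$), and continuity of the truncation $\tilde R\mapsto R$ and of the parametrization in \cref{resparameterthm} yields $\mathcal D_{D'}\subset\overline{\mathcal D_D}$.

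The hard part will be determining this reparametrization explicitly and proving it is a bijection compatible with the degeneration, since commuting $X_{(i+1,i)}(\beta_c)$ past the bigon spawns correction factors $X_{(i+1,p)}(\beta_c\beta_b)$ that must be reabsorbed. Here both hypotheses earn their keep: because $i,i+1$ are consecutive, every correction again involves only the strands $i,i+1$ and a single $p$, so the relations close within $B$; and because $\pi(D')=1$, the pipes $i$ and $i+1$ meet nowhere but at $c$ and $c'$, so no correction is generated outside $B$. A conceptually cleaner route to the same limit uses the Marsh--Rietsch model $\mathcal D_D=\pi_k(G_{\mathbf{v}_D,\mathbf{w}}B^+)$ with the coset identity $y_i(p)B^+=x_i(1/p)\,\dot s_i B^+$: letting the parameter at $c$ tend to infinity turns its factor into $\dot s_{i_c}$ (so $c$ becomes a $\circ$), and conjugating the accompanying torus element rightward through the intervening factors—coupled with a matched rescaling at $c'$—turns $y_{i_{c'}}$ into the $\bullet$-factor $x_{i_{c'}}(q)\,\dot s_{i_{c'}}^{-1}$, recovering $G_{\mathbf{v}_{D'},\mathbf{w}}$ in the limit; this repackages the same bookkeeping as a single conjugation.
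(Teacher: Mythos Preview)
Your argument that $D$ is a Go-diagram is correct and matches the paper's. The closure argument, however, has a genuine gap.

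Your primary plan is to commute $X_{(i+1,i)}(\beta_c)$ across the bigon block and send $\beta_c\to 0$. But the commutation $X_{(i+1,i)}(a)\,X_{(i,p)}(b)=X_{(i,p)}(b)\,X_{(i+1,p)}(ab)\,X_{(i+1,i)}(a)$ leaves the original factor $X_{(i,p)}(b)$ in place and only appends $X_{(i+1,p)}(ab)$ as a correction. On a bigon cell $b$ with $\sigma_D(b)=(i,p)$ the target factor in $\tilde R_{D'}$ is $X_{\sigma_{D'}(b)}=X_{(i+1,p)}$, so you need to \emph{replace} the subscript $(i,p)$ by $(i+1,p)$, not merely append a correction. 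As $\beta_c\to 0$ the correction $X_{(i+1,p)}(\beta_c\beta_b)$ vanishes and you are left with $X_{(i,p)}(\beta_b)$, which has the wrong subscript and no cell of $D'$ at which to be absorbed; rescaling $\beta_b$ to keep $\beta_c\beta_b$ finite makes $X_{(i,p)}(\beta_b)$ blow up instead. Commuting by an elementary matrix never changes the subscript of the factor it passes, so no reparametrization within this framework can produce $\mathcal B_{D'}$.

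The paper uses a different mechanism: it \emph{conjugates} the block between $c$ and $c'$ by $W_{(i+1,i)}(\gamma_c)=X_{(i+1,i)}(\gamma_c)X_{(i,i+1)}(-1/\gamma_c)X_{(i+1,i)}(\gamma_c)$, a representative of $s_i$, which genuinely swaps subscripts $\sigma_D(b)\mapsto\sigma_{D'}(b)$ on every bigon cell (\cref{identites}), and then sends $\gamma_c\to\infty$ so that the residual $X_{(i,i+1)}(\pm 1/\gamma_c)$ at $c$ and $c'$ tends to the identity. The conjugation leaves a stray $X_{(i+1,i)}(-\gamma_c)$ to be pushed from $c$ to $c'$; along the way it spawns ``excited'' factors (Case~4), and moving \emph{those} spawns further excited factors involving yet other pipes---the cascade does not close within the bigon. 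This is precisely where $\pi(D')=1$ enters, via \cref{blackstonelemma}: it guarantees that any two pipes inverted at some cell must later uninvert, so every excited factor has a ``cooling site'' at a black stone of $D'$ where it can be absorbed. Your reading of $\pi(D')=1$ as merely preventing $i,i+1$ from meeting outside $c,c'$ misses this; the hypothesis controls \emph{all} pipes, not just $i$ and $i+1$. Your Marsh--Rietsch sketch is closer in spirit---the identity $y_i(p)B^+=x_i(1/p)\dot s_iB^+$ with $p\to\infty$ is essentially the $W$-conjugation---but conjugating the resulting torus element through the block produces the same cascade of corrections and the same need for cooling sites, which you do not address.
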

    The fact that $D$ is a Go-diagram is easy to see, as if $D$ has a forbidden configuration, it involves a pipe with label $k\not=i,i+1$, so this will form a forbidden configuration even in $D'$. In \cref{sec:generalclosure}, we will work towards removing the requirement. The key lemma for proving \cref{theorem:closurerelation} is \cref{prodlemma}. We use \cref{prodlemma} to write both $\Tilde{R}_D$, and $\Tilde{R}_{D'}$ both equipped with compatible set of parameters (See \cref{resparameterthm}). Then we describe a $D'$-distortion of $\Tilde{R}_D$ which transforms the product obtained for $\Tilde{R}_D$ using \cref{prodlemma} to make the product similar to the one for $\Tilde{R}_{D'}$. The upshot of this is that we will easily be able to obtain a sequence of points, so that $\lim \Tilde{R}_D = \Tilde{R}_{D'}$, where in the limit, one of the parameters for $\Tilde{R}_D$ approaches infinity. Finally, truncating the matrices in the above sequence, we will get $\lim R_D = R_{D'}$, proving the theorem.

    Before proving the theorem, we define $W_{(i,j)}(\beta):=X_{(i,j)}(\beta)X_{(j,i)}(-\frac{1}{\beta})X_{(i,j)}(\beta)$ for all $\beta\in \mathbb{F}^*$. These are parametrized versions of $\Dot{s}_i$ that appear in the Marsh and Rietsch parametrization when $j=i+1$. We also note the following identities, which can be easily verified by restricting ourselves to $3\times 3$ matrices:
    \begin{lemma}\label{identites}
        Let $\beta\in \mathbb{F}^*, \gamma\in \mathbb{F}, i,j,k\leq n, i\not=j\not=k$. 
        \begin{enumerate}
            \item $W_{(i,j)}(\beta)W_{(i,j)}(-\beta) = 1$
            \item $W_{(i,j)}(-\beta)X_{(j,k)}(\gamma)W_{(i,j)}(\beta) = X_{(i,k)}(-\beta\gamma)$
            \item $W_{(i,j)}(-\beta)X_{(i,k)}(\gamma)W_{(i,j)}(\beta) = X_{(j,k)}(\frac{\gamma}{\beta})$
            \item $W_{(i,j)}(-\beta)X_{(k,i)}(\gamma)W_{(i,j)}(\beta) = X_{(i,k)}(\beta\gamma)$
            \item $W_{(i,j)}(-\beta)X_{(k,j)}(\gamma)W_{(i,j)}(\beta) = X_{(i,k)}(-\frac{\gamma}{\beta})$
            \item $X_{(i,j)}(-\gamma)X_{(j,k)}(-\beta)X_{(i,j)}(\gamma)X_{(j,k)}(\beta)= X_{(i,k)}(\beta\gamma)$
            \item $X_{(i,j)}(-\gamma)X_{(k,i)}(-\beta)X_{(i,j)}(\gamma)X_{(k,i)}(\beta)= X_{(k,j)}(-\beta\gamma)$
        \end{enumerate}
    \end{lemma}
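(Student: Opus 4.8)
The plan is to exploit the fact that every matrix occurring in the seven identities differs from the identity only in the rows and columns indexed by $i$, $j$, $k$. Writing $X_{(a,b)}(\beta) = I + \beta E_{(a,b)}$ with $E_{(a,b)}$ the elementary matrix, all of $W_{(i,j)}(\pm\beta)$, $X_{(i,j)}(\pm\gamma)$, etc.\ act as the identity on $\mathrm{span}\{e_\ell : \ell\notin\{i,j,k\}\}$. Hence each identity reduces to an equality of matrices restricting to the $3\times 3$ block on $\{i,j,k\}$ (indeed to a $2\times 2$ block for (1), which mentions only $i,j$). So the whole lemma is a finite check, exactly as the statement advertises; the only work is to organize it so the bookkeeping stays transparent.

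The single computation underlying everything is the normal form of $W_{(i,j)}(\beta)$. Multiplying out $X_{(i,j)}(\beta)X_{(j,i)}(-\tfrac1\beta)X_{(i,j)}(\beta)$ on the $\{i,j\}$-block gives the antidiagonal monomial matrix $\left(\begin{smallmatrix} 0 & \beta \\ -1/\beta & 0\end{smallmatrix}\right)$, fixing $e_k$; equivalently $W_{(i,j)}(\beta)e_i = -\tfrac1\beta e_j$, $W_{(i,j)}(\beta)e_j = \beta e_i$, and on the dual side $\overline{e}_i W_{(i,j)}(\beta) = \beta\,\overline{e}_j$, $\overline{e}_j W_{(i,j)}(\beta) = -\tfrac1\beta\,\overline{e}_i$. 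Identity (1) is then immediate: $W_{(i,j)}(-\beta)$ has antidiagonal block $\left(\begin{smallmatrix} 0 & -\beta \\ 1/\beta & 0\end{smallmatrix}\right)$, which is the inverse of the previous block, so the product is $I$. In particular $W_{(i,j)}(-\beta) = W_{(i,j)}(\beta)^{-1}$, and this is what turns (2)--(5) into genuine conjugations.

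For (2)--(5) I would conjugate in outer-product form: since $X_{(a,b)}(\gamma) = I + \gamma\, e_a\overline{e}_b$, we get $W^{-1}X_{(a,b)}(\gamma)W = I + \gamma\,(W^{-1}e_a)(\overline{e}_b W)$ with $W = W_{(i,j)}(\beta)$. Feeding in the four rules $W^{-1}e_i = \tfrac1\beta e_j$, $W^{-1}e_j = -\beta e_i$, $\overline{e}_i W = \beta\,\overline{e}_j$, $\overline{e}_j W = -\tfrac1\beta\,\overline{e}_i$ (and that $W$ fixes $e_k$ and $\overline{e}_k$) immediately produces the scalar and the relabelled index pair in each case. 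This is where I expect the only real friction: in each of (2)--(5) the nontrivial index sits on $e_a$ versus $\overline{e}_b$ differently, so one must track carefully which of $\beta$ or $-\tfrac1\beta$ is picked up and where the surviving off-diagonal entry lands; it is very easy to transpose the resulting row/column labels or to drop a sign. Working (4) as a representative example, conjugating $E_{(k,i)}$ fixes $e_k$ and replaces the column index $i$ by $j$ with factor $\beta$, so the surviving entry sits in row $k$, column $j$ — so this step is exactly where one should double-check the stated right-hand sides.

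Finally, (6) and (7) are Steinberg-type commutator relations and need no normal form at all. Expanding $X_{(i,j)}(-\gamma)X_{(j,k)}(-\beta)X_{(i,j)}(\gamma)X_{(j,k)}(\beta)$ (and similarly for (7)) and using $E_{(a,b)}E_{(c,d)} = \delta_{bc}E_{(a,d)}$, every quadratic term vanishes except the one from $E_{(i,j)}E_{(j,k)} = E_{(i,k)}$ (resp.\ $E_{(k,i)}E_{(i,j)} = E_{(k,j)}$), because $i,j,k$ are pairwise distinct; all linear terms cancel in conjugate pairs, leaving $I + \beta\gamma E_{(i,k)}$ (resp.\ $I - \beta\gamma E_{(k,j)}$). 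There is no conceptual obstacle anywhere in the lemma: the entire difficulty is clerical, namely keeping the signs and the $\beta$-versus-$\tfrac1\beta$ scalings straight in (2)--(5), and it is settled once and for all by the single normal-form computation for $W_{(i,j)}(\beta)$ together with the elementary-matrix product rule $E_{(a,b)}E_{(c,d)}=\delta_{bc}E_{(a,d)}$.
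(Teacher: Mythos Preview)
Your approach is exactly the paper's: it merely says the identities ``can be easily verified by restricting ourselves to $3\times 3$ matrices'', and your normal-form computation $W_{(i,j)}(\beta)\big|_{\{i,j\}} = \left(\begin{smallmatrix}0&\beta\\-1/\beta&0\end{smallmatrix}\right)$ together with the outer-product conjugation rule $W^{-1}X_{(a,b)}(\gamma)W = I + \gamma\,(W^{-1}e_a)(\overline{e}_bW)$ is a clean way to carry this out. Your caution on (4) is justified: the calculation indeed yields $X_{(k,j)}(\beta\gamma)$, and likewise (5) yields $X_{(k,i)}(-\gamma/\beta)$ --- in agreement with the paper's own remark immediately after the lemma that conjugation by $W_{(i,j)}(\beta)$ acts on the subscript by the transposition $t_{i,j}$ --- so the printed right-hand sides of (4) and (5) are evidently typos in the statement rather than errors in your verification.
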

    Note that the action of conjugating by $W_{(i,j)}(\beta)$ on $X_p(\gamma)$ changes the subscript $p$ to $t_{i,j}(p)$ where $p$ is some ordered pair not equal to $(i,j)$ or $(j,i)$ and $t_{i,j}$ is the transposition swapping $i$ and $j$. For the proof of \cref{theorem:closurerelation} we take the matrix $\Tilde{R}_D$ and conjugate all entries between $c$ and $c'$ with $W_{(i+1,i)}$ to make the subscripts showing up in the product match with the ones showing up in the product for $\Tilde{R}_{D'}$. Unfortunately, the conjugation itself produces extra terms; the lemma below provides a tool to fix this problem. 
    \begin{lemma}\label{blackstonelemma}
        Let $D'$ be a Go-diagram with a crossing and an uncrossing pair at $c$ and $c'$, and the crossing is between pipes $i$ and $j$ where $i<j$. Let us also assume that $\pi(D') = 1$. Then the following is true about $D'$:
        \begin{itemize}
            \item If there is cell $c\prec b\prec c'$ with $\sigma_{D'}(b)=(k,j)$ or $(j,k)$ in $D'$, between $c$ and $c'$, $k>i$, then there is unique minimal cell $d'$ with a black stone such that $\sigma_{D'}(d')=(k,i)$. If $d$ is the cell with the corresponding white stone, then we have $d\prec b \prec d'$.
            \item If there is cell $c\prec b\prec c'$ with $\sigma_{D'}(b)=(i,k)$ or $(k,i)$ in $D'$, between $c$ and $c'$, $j>k$, then there is unique minimal cell $d'$ with a black stone such that $\sigma_{D'}(b')=(j,k)$. If $d$ is the cell with the corresponding white stone, then we have $d\prec b \prec d'$.
        \end{itemize}
    \end{lemma}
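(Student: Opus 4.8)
The two bullets are mirror images of each other under the reflection exchanging the west and north boundaries of the pipe dream (equivalently, under the duality $Gr_{k,n}\simeq Gr_{n-k,n}$ of the previous subsection, which trades restricted paths for dual restricted paths and replaces $\sigma_{D'}$ by its reverse $\sigma_{D'}^\star$), so I would prove the first bullet carefully and obtain the second from the same argument applied to the reflected picture. The whole plan rests on the hypothesis $\pi(D')=1$: because the pipe dream realizes the identity permutation, every ordered pair of pipes is inverted a net of zero times, so the crossings ($\circ$) and uncrossings ($\bullet$) of any fixed pair of pipes occur in matched crossing--uncrossing pairs, each such pair bounding a lens-shaped region inside which those two pipes run in inverted order (the higher-indexed one ahead of the lower-indexed one).

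First I would set up the lens of the given pair. Since $c\prec c'$ is a crossing--uncrossing pair of pipes $i$ and $j$ (with no crossing of $i,j$ strictly between them), the arc of pipe $i$ and the arc of pipe $j$ from $c$ to $c'$ bound a topological disc $\mathcal R$, and the cells lying strictly between $c$ and $c'$ in the poset are exactly those swept out by $\mathcal R$; in particular the hypothesised cell $b$ lies in $\mathcal R$, and throughout $\mathcal R$ pipe $j$ runs ahead of pipe $i$. The central observation is then a clean adjacency argument giving the relative order of pipes $i,j,k$ at $b$. Since $\sigma_{D'}(b)\in\{(k,j),(j,k)\}$, pipes $j$ and $k$ share the cell $b$, hence are \emph{consecutive} in the linear order of pipes along a sweepline through $b$; as $i\neq j,k$, pipe $i$ cannot lie strictly between two consecutive pipes, so it lies behind both $j$ and $k$ (it is behind $j$ because $i,j$ are inverted at $b$). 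Therefore pipe $k$ runs ahead of pipe $i$ at $b$, i.e. pipes $i$ and $k$ are inverted at $b$ as well (recall $i<k$). Note this never used whether $b$ carries an elbow or a crossing, nor the position of $k$ relative to $j$.

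Granting that $i$ and $k$ are inverted at $b$, the required pair is produced by the net-zero principle applied along pipe $k$ itself, which is monotone (running only to the northwest). Parametrising the relative order of pipes $i$ and $k$ by position along pipe $k$, this order is non-inverted at the southeast endpoint (positions $i<k$) and at the northwest endpoint (again $i<k$, since $\pi(D')=1$), but inverted at $b$; each change of the inversion indicator occurs exactly where $k$ crosses $i$, a $\circ$ for a non-inverted-to-inverted switch and a $\bullet$ for the reverse. Hence there is a crossing $d$ of pipes $i,k$ occurring before $b$ along pipe $k$, so $d\prec b$, and an uncrossing occurring after $b$, giving a cell $d'$ with $b\prec d'$. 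This $d'$ is a black stone, and by the configuration-$D$ convention (at an uncrossing of the inverted pair $i<k$ the higher-indexed pipe $k$ exits through the left edge) it has $\sigma_{D'}(d')=(k,i)$, while its matching white stone $d$ satisfies $d\prec b\prec d'$.

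The remaining point, which I expect to be the genuine obstacle, is the \emph{minimality and uniqueness} of $d'$ among black stones with jump coordinate $(k,i)$: the argument above only yields one uncrossing of $i,k$ straddling $b$, whereas in principle pipes $i$ and $k$ could cross and uncross several times, producing further black stones with coordinate $(k,i)$ to the southeast of $b$ that would be $\prec$-smaller. To pin down the unique minimal $d'$ I would show that no uncrossing of $i$ and $k$ can occur strictly southeast of $b$: such an uncrossing would, together with its own crossing, create a second $i,k$-lens below $b$ that, given that $b$ already lies inside the $i,j$-lens $\mathcal R$ and that $D'$ is a Go-diagram (no configuration $B$), cannot be fitted in without either forcing a forbidden elbow-on-inverted-pipes cell or contradicting $\pi(D')=1$. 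Controlling these multiplicities cleanly is the delicate part of the proof; once it is done, the $\prec$-minimal black stone with coordinate $(k,i)$ is exactly the cell $d'$ constructed above, and the second bullet follows by the mirror-image argument, which interchanges the roles of $i$ and $j$ and of ``exits left'' and ``exits top''.
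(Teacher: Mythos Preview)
Your core argument is correct and is essentially the paper's proof, rephrased geometrically. The paper fixes a reading order, lets $l$ be the position of $b$, observes that in the partial product $v_{(l)}$ the pair $(i,j)$ is inverted (because $b$ sits between the crossing $c$ and the uncrossing $c'$) and that $j,k$ are adjacent (because they share the cell $b$), concludes that $(i,k)$ is inverted in $v_{(l)}$, and then uses $\pi(D')=1$ to produce a crossing of $i,k$ before $b$ and an uncrossing after. Your sweepline/adjacency argument is exactly this reasoning in pictorial language, and your treatment of the second bullet by symmetry matches the paper's ``the proof for the second part is similar, just with the inequalities reversed.''

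Where you diverge is in your final paragraph, worrying about \emph{global} minimality of $d'$ among all black stones with jump coordinate $(k,i)$. This concern is misplaced: the paper does not establish global minimality and does not intend it. Its proof ends with the sentence ``$d'$ is simply the minimal cell where they uncross \emph{after $b$}, $d$ is the maximal cell where they cross each other before $b$.'' Since every crossing or uncrossing of pipes $i$ and $k$ lies on pipe $k$, these cells are totally ordered by $\prec$, so the minimal uncrossing after $b$ exists and is unique; that is the $d'$ meant, and its matching white stone is $d$. Your third paragraph already constructs this pair, so you are done at that point. The phrase ``unique minimal'' in the statement should be read as minimal among those with $b\prec d'$; the only application of the lemma (producing cooling sites for excited factors) needs exactly a black stone with the right coordinate lying to the left of $b$ in the reading order, nothing more. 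Your proposed attack on ruling out $i,k$-uncrossings southeast of $b$ is therefore unnecessary --- and the claim you were gearing up to prove is false in general, since nothing prevents pipes $i$ and $k$ from crossing and uncrossing several times before reaching $b$.
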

    \begin{proof}
        We will prove the first part; the proof for the second part is similar, just with the inequalities reversed. Choose an arbitrary reading order on $D'$, and let $l$ be the number at cell $b$. Let $\textbf{v}$ be the subexpression of identity corresponding to $D'$. Since $b$ is between $c$ and $c'$, in $v_{(l)}$, $(i,j)$ is an inversion pair. Moreover since $\sigma_{D'}(b) =(k,j)$ or $(j,k)$, in $v_{(l)}$, $k$ and $j$ are adjacent to each other. This means that even $(i,k)$ is an inversion pair in $v_{(l)}$. Hence, these two pipes have crossed each other before $b$ and since $\pi(D') = 1$, they will uncross each other eventually. Note that the cells traversed by any pipe are totally ordered by $\prec$. So we get that $d'$ is simply the minimal cell where they uncross after $b$, $d$ is the maximal cell where they cross each other before $b$.
    \end{proof}
    Now we are going to describe the $D'$-\textbf{distortion} of $\Tilde{R}_D$. Fix a reading order and arrange the cells of $\lambda$ in the linear order using the chosen reading order. Now, the product formula of $\Tilde{R}_D$ from \cref{prodlemma} assigns a factor to each cell. We will fill each cell of $\lambda$ with factors coming from \cref{prodlemma}. Let $\gamma_b$ be the parameter at cell $b$. We conjugate every entry between $c$ and $c'$ with $W_{i+1,i}(\gamma_c)$:
    \begin{align*}
        \Tilde{R}_D &= ...|X_{(i+1,i)}(\gamma_{c'})|...|X_{\sigma_D(b)}(\gamma_b)|...|X_{(i+1,i)}(\gamma_c)|...\\
        &= ...|X_{(i+1,i)}(\gamma_{c'})W_{(i+1,i)}(\gamma_c)|...|W_{(i+1,i)}(-\gamma_c)X_{\sigma_D(b)}(\gamma_b)W_{(i+1,i)}(\gamma_c)|...|W_{(i+1,i)}(-\gamma_c)X_{(i+1,i)}(\gamma_c)|...\\
        &= ...|X_{(i+1,i)}(\gamma_{c'}+\gamma_c)X_{(i,i+1)}(-\frac{1}{\gamma_c})X_{(i+1,i)}(\gamma_c)|...|X_{\sigma_{D'}(b)}(\gamma_b')|...|X_{(i+1,i)}(-\gamma_c)X_{(i,i+1)}(\frac{1}{\gamma_c})|...
    \end{align*}
    Here, we have used $|$ to seperate out the factors in different cells of $\lambda$. Now given $\Tilde{R}_{D'}$ with some choice of parameters $\beta_{\sigma_{D'}(b)}$ we would like $\Tilde{R}_D$ to converge to $\Tilde{R}_{D'}$ when $\beta_b\to\infty$ while fixing the parameters in other factors. Since now the subscripts of factors in all cells except $c$ and $c'$ match with the subscripts of the factors in $D'$, we would like to keep the content in these cells constant, while sending $\gamma_c\to \infty$. If we send $\gamma_c\to\infty$ right now, the factors in cells $c$ and $c'$ will blow up due to the presence of $X_{(i+1,i)}(-\gamma_c)$ and $X_{(i+1,i)}(\gamma_c)$ at $c$ and $c'$ respectively.  So, we would like these factors to cancel each other. Hence, we will move the factor $X_{(i+1,i)}(-\gamma_c)$ all the way to $c'$. While moving this to the left, when we pass through cell $b$, we encounter the following cases:
    \begin{itemize}
        \item Case 1: $b$ has white stone in $D$. Since $c\prec b\prec c'$, and pipes at $c$ and $c'$ are $i$ and $i+1$ (adjacent), we get that $b$ has a white stone in $D'$ as well. Therefore the factor at cell $b$ is identity matrix, and therefore $X_{(i+1,i)}(-\gamma_c)$ passes through $b$ freely without changing anything at that cell.
        \item Case 2: $\sigma_D(b)=(k,l)$, $\{k,l\}\cap\{i,i+1\}=\emptyset$. In this case, $\gamma_b' = \gamma_b, \sigma_D(b) = \sigma_{D'}(b)$ and $X_{(i+1,i)}(-\gamma_c)$ commutes with $X_{\sigma(b)}(\gamma_b)$, so we can freely move $X_{(i+1,i)}(-\gamma_c)$ to the left. So the factor at cell $b$ is unaffected.
        \item Case 3: $\sigma_D(b) = (i,k)$ or $\sigma_D(b) = (k,i+1)$. If $\sigma_D(b) = (i,k)$ (resp. $(k,i+1)$) $b$ doesn't have a white stone in $D$ (and $D'$), we have $k<i$ (resp. $i+1<k$), $\sigma_{D'}(b) = (i+1,k)$ (resp. $(k,i)$), and $\gamma_b' = -\gamma_b\gamma_c$ (resp. $\gamma_b\gamma_c$). In this case $X_{(i+1,i)}(-\gamma_c)$ will commute with $X_{\sigma_{D'}(b)}(\gamma_b')$, hence again, we can freely move $X_{i+1,i}(-\gamma_c)$ to the left and the entry at the cell $b$ is unaffected.
        \item Case 4: $\sigma_D(b) = (k,i)$ or $(i+1,k)$. Let us assume $\sigma_D(b) = (k,i)$, the other one is handled similarly. Since $b$ doesn't have a white stone in $D$ (and $D'$), we have $k>i$. We have $\gamma_b' = -\frac{\gamma_b}{\gamma_c}$ and $\sigma_{D'}(b)=(k,i+1)$. This time $X_{(i+1,i)}$ does \textbf{not} commute with $X_{\sigma_{D'}(b)}(\gamma_b')$. Therefore, we use the following identity (identity 7 in \cref{identites}):
        \begin{equation*}
            X_{(k,i+1)}\left(-\frac{\gamma_b}{\gamma_c}\right)X_{(i+1,i)}(-\gamma_c) = X_{(k,i)}(\gamma_b)X_{(i+1,i)}(-\gamma_c)X_{(k,i+1)}\left(-\frac{\gamma_b}{\gamma_c}\right).
        \end{equation*}
        This allows us to move $X_{(i+1,i)}(\gamma_c)$ to the left at the cost of producing an extra factor $X_{(k,i)}(\gamma_b)$ at cell $b$. 
    \end{itemize}
    After doing the above movement, we got rid of the factor $X_{(i+1,i)}(\gamma_c)$, with two factors at cell $c'$ ($X_{(i+1,i)}(\gamma_c+\gamma_{c'})$ and $X_{i,i+1}(\frac{1}{\gamma_c})$), but now we have produced extra factors at some cells whenever Case 4 was triggered.
    \begin{definition}[Excited factor]
        We say a factor at cell $b$ is \textbf{excited}, if:
        \begin{itemize}
            \item $b\not =c'$, and subscript of the factor is not the same as $\sigma_{D'}$.
            \item $b= c'$, and the subscript of the factor is not $(i+1,i)$ or $(i+1,i)$
        \end{itemize}
    \end{definition}
    Whenever Case 4, is triggered, we produce an excited entry at the cell $b$. Case 4 is triggered at every cell $b$ with jump coordinates $\sigma_{D'}(b) = (k,i+1)$ or $(i,k)$ (with $k>i$ in the first case, and $k<i$ in other case as as $b$ doesn't have a white stone in $D'$). For each such cell $b$, using \cref{blackstonelemma}, we get a cell $d'$ with a black stone in $D'$ which has the same jump coordinate as the subscript of the excited entry. We will call $d$ the \textbf{cooling} site for the excited factor at $b$. Now starting with the leftmost excited factor, we move the excited factors leftward to their respective cooling sites. The case analysis of this movement is similar to that of $X_{(i+1,i)}(\gamma_c)$ (in this language, we think of $X_{(i+1,i)}(-\gamma_c)$ as the first excited factor). Therefore, in this movement, we may produce new excited entries; however, whenever Case 4 is triggered, we can also use \cref{blackstonelemma} to find a cooling site for the new excited entries. So we keep moving excited entries leftward to their cooling sites until we no longer have excited entries. This process changes the factors in each cell, and this arrangement of factors in each cell at the end of this process is what we call the \textbf{$D'$-distortion} of $\Tilde{R}_D$. See \cref{ex:distort}. We first show that this process is well defined.
    
    \begin{lemma}\label{lem:distort}
        Let $D', D$, $c$, $c'$ and $i$ be as in \cref{theorem:closurerelation}.Then the process of obtaining $D'$ distortion of $\Tilde{R}_D$ is well defined, that is, we have the following properties:
        \begin{itemize}
            \item If $X_{(k_1,k_2)}(\gamma)$ is an excited factor at any cell, then $k_1>k_2$.
            \item Each newly produced excited factor, while we are moving an excited factor to its cooling site, has a cooling site (so we will be able to move these factors to their cooling sites and continue the process).
            \item The process terminates.
        \end{itemize}
    \end{lemma}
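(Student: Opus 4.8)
The plan is to treat the original factor $X_{(i+1,i)}(-\gamma_c)$ and every excited factor spawned afterwards on the same footing, as \emph{traveling factors} $X_{(a,b)}(\gamma)$ that are slid leftward (earlier in the fixed reading order) until they are absorbed at a cooling site. A single leftward step passes a traveling factor $X_{(a,b)}(\gamma)$ across the factor $X_{\sigma_{D'}(\beta)}(\delta)$ at the cell $\beta$ immediately to its left: white-stone cells carry the identity and are crossed for free, while a $+$ or $\bullet$ cell has $\sigma_{D'}(\beta)=(p,q)$ with $p>q$. These two factors commute unless their subscripts chain, and I would record that there are exactly two non-commuting cases, both handled by identities (6) and (7) of \cref{identites}: Case A, when $q=a$ (so $\sigma_{D'}(\beta)=(p,a)$ with $p>a$), depositing a new factor $X_{(p,b)}$; and Case B, when $p=b$ (so $\sigma_{D'}(\beta)=(b,q)$ with $b>q$), depositing $X_{(a,q)}$. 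All three bullets will then follow from a single invariant $(\star)$: every traveling factor $X_{(a,b)}$ sitting at a cell $\beta$ satisfies $a>b$ and has its two pipes $a,b$ inverted at $\beta$ (they have crossed but not yet uncrossed southeast of $\beta$).

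The first bullet is just the ``$a>b$'' half of $(\star)$. The base factor $X_{(i+1,i)}(-\gamma_c)$ satisfies $i+1>i$, and the case analysis preserves this: in Case A the new subscript is $(p,b)$ with $p>a>b$, and in Case B it is $(a,q)$ with $a>b>q$. An induction on the order in which factors are created then gives $k_1>k_2$ for every excited factor.

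The second bullet is where the real work lies, and I expect maintaining the inversion half of $(\star)$ to be the main obstacle. The guiding observation is that the inversion status of a fixed pair $\{a,b\}$ toggles only at a cell where $a$ and $b$ cross or uncross one another; crossings with third pipes, and all $+$-cells, leave it untouched. Hence a traveling factor $X_{(a,b)}$ keeps its pair inverted as it slides, and can only be halted at a $\bullet$-cell with $\sigma_{D'}=(a,b)$, which is exactly its cooling site; since $\pi(D')=1$ every inversion must eventually be undone, so such a cell exists and lies to the left. It then remains to check that a freshly spawned factor again satisfies $(\star)$. In Case A the producing cell $\beta$ has pipes $p$ and $a$ adjacent in the partial permutation at $\beta$, while $\{a,b\}$ is inverted there and $p>a>b$; the very argument used in the proof of \cref{blackstonelemma} (adjacency of $p,a$ together with inversion of $\{a,b\}$) forces $\{p,b\}$ to be inverted at $\beta$, and Case B is symmetric. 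Thus the spawned pair is inverted, and applying $\pi(D')=1$ exactly as in \cref{blackstonelemma} produces its cooling site.

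For the third bullet I would prove termination with a span measure: assign to $X_{(k_1,k_2)}$ the value $k_1-k_2\in\{1,\dots,n-1\}$. Both non-commuting cases strictly increase the span of the child over its parent, since Case A replaces $(a,b)$ by $(p,b)$ with $p>a$ and Case B replaces it by $(a,q)$ with $q<b$. Organizing the excited factors into a genealogical forest rooted at $X_{(i+1,i)}$, every edge strictly increases the span, so the forest has depth at most $n-2$; moreover each factor passes through only finitely many cells before being absorbed at its cooling site, and so spawns only finitely many children. Consequently only finitely many excited factors are ever created, each travels a finite distance, and the distortion process terminates.
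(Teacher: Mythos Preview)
Your overall scheme is sound, and for the first two bullets your invariant $(\star)$ is essentially what the paper uses; in particular, the appeal to the inversion argument of \cref{blackstonelemma} to locate cooling sites for spawned factors is exactly right. However, your framework has a genuine gap at the cell $c'$, and this gap is fatal for your termination argument.

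After the initial conjugation by $W_{(i+1,i)}(\gamma_c)$ and after the first excited factor $X_{(i+1,i)}(-\gamma_c)$ has been absorbed at $c'$, the cell $c'$ does \emph{not} carry a single factor $X_{\sigma_{D'}(c')}(\delta)$ with a lower-triangular subscript. It carries two factors, $X_{(i+1,i)}(\gamma_c+\gamma_{c'})\,X_{(i,i+1)}(-1/\gamma_c)$, and the second of these is upper-triangular. Later excited factors whose cooling sites lie beyond $c'$ must pass through both. Your two non-commuting cases are stated only for cell factors $(p,q)$ with $p>q$, so the interaction with $X_{(i,i+1)}$ is never analyzed. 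Concretely, when a traveling factor $X_{(i+1,k_2)}$ with $k_2<i$ crosses $X_{(i,i+1)}$ it spawns $X_{(i,k_2)}$, whose span $i-k_2$ is \emph{strictly smaller} than the parent's span $i+1-k_2$; the same drop occurs for $X_{(k_1,i)}$ with $k_1>i+1$. This is visible already in \cref{ex:distort}, where $X_{(6,4)}$ (span $2$) produces $X_{(5,4)}$ (span $1$) at $c'$, which in turn produces $X_{(6,4)}$ (span $2$) again. So the span does not increase monotonically along the genealogy, and your depth bound of $n-2$ does not follow.

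The paper avoids this by arguing termination positionally rather than via span: it checks separately that every spawn happens at a cell strictly earlier in the reading order than the cell of its parent, with the single bounded exception of spawns at $c'$ itself, and then bounds the total number of excited factors by a recurrence in the reading-order position. Your span argument would work perfectly if no cooling site ever lay beyond $c'$, but that is not guaranteed; to repair your approach you would need to treat passage through $c'$ as a special case, e.g.\ by combining the span with a position-based quantity into a lexicographic well-ordering.
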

    \begin{proof}
        Let $X_{(k_1,k_2)}(\gamma)$ be an excited factor at cell $b$. To show the first statement, we show that if $k_1>k_2$, then while moving $X_{(k_1,k_2)}(\gamma)$ to its cooling site, the newly excited factors will satisfy the same property. Since the first excited entry is $X_{(i+1,i)}(-\gamma_c)$, we will get the first statement. So assume $k_1>k_2$.  Let $b'\succ b$ be a cell which triggers case 4. We have two cases:
        \begin{itemize}
            \item $b' \not= c'$: Since we are always moving the leftmost excited factor to its cooling site, the fact that case 4 is triggered means that there is exactly one nontrivial factor at cell $b'$ (which is not an excited factor). Therefore, the factor at $b'$ has the same subscripts as $\sigma_{D'}(b')$. Since we triggered case 4, the factor at cell $b'$ is not identity. So $D'$ doesn't have a white stone at cell $b'$; hence, the jump coordinates at cell $b'$ are  $(k_3,k_1)$ with $k_3>k_1>k_2$ or $(k_2,k_3)$ with $k_3<k_2<k_1$. Moreover, our excited factor is moving to its cooling site, so we know that $b'$ lies between a $(k_1,k_2)$ crossing-uncrossing pair in $D'$; hence, we can use \cref{blackstonelemma} to produce a cooling site for our new excited factor. Moreover, our new excited factor has subscripts $(k_3,k_2)$ if $k_3>k_1>k_2$ or $(k_1,k_3)$ if $k_3<k_2<k_1$, so our new factor also satisfies the first statement.
            \item $b' = c'$. Note that since $c'$ is not the cooling site for our excited factor, and since case 4 is triggered at $c'$, we either have $k_1 =i+1$ or $k_2 = i$, but not both. First, let us handle the case when $k_1 = i+1$. Therefore $k_2\not= i$, and since $k_1>k_2$, we get $i>k_2$. Since we are moving the leftmost excited factor, at cell $c'$, we have two factors, and $X_{(i+1,k_2)}(\gamma)$ commutes with $X_{(i+1,i)}(\gamma_c+\gamma_{c'})$ but not with $X_{(i,i+1)}(-\frac{1}{\gamma_c})$. Therefore, when we try moving our excited factor past $c'$, we get a new excited factor $X_{(i,k_2)}(-\frac{\gamma}{\gamma_c})$. Now, since our excited factor is moving to its cooling site, we see that $c'$ lies between a $(i+1,k_2)$ crossing in $D'$. Therefore, we are in the first case of \cref{blackstonelemma}, and we get a cooling site for $X_{(i,k_2)}(-\frac{\gamma}{\gamma_c})$. Note that the new excited factor satisfies the first property. The case where $k_2= i$ is handled similarly.
        \end{itemize} 
        Now we show that the process terminates. When we are moving the excited factor $X_{(k_1,k_2)}(\gamma)$ to its cooling site, all the new excited factors are produced in the cells strictly to the left of $b$ in our reading order if $b\not=c'$. If $b =c'$, and if $k_1 = i$, then we get at most one more excited factor at $c'$ itself. Furthermore, any excited factor produced at the second cell from the left in our reading order will not create any new excited factor. Therefore if $b'$ is the $k^{th}$ cell from the left, in the process of moving $X_{(k_1,k_2)}(\gamma)$ to its cooling site, and then moving any newly produced excited factors produced to the left of $b'$ to their cooling sites, the number of excited factors created is bounded by $a_k$, where the sequence $a_k$ is described by the recurrence relation:
        \begin{align*}
            a_1&=0\\
            a_{k}&= k+\sum_{i=1}^{k-1}a_i \;\;\forall k>2.
        \end{align*}
        In particular, moving $X_{(i+1,i)}(-\gamma_c)$ to its cooling site at $c'$ will produce finitely many excited factors, and thus the process terminates.
    \end{proof}
    
    \begin{example}\label{ex:distort}
        
        \begin{figure}[h]
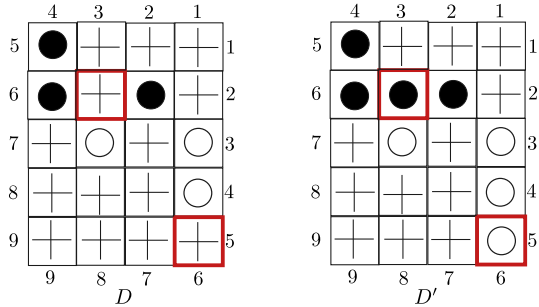

            \centering
            \includesvg[width=30mm, height=40mm]{D1}
            \hspace{2em}
            \includesvg[width=30mm, height=40mm]{D2}
            \caption{Go-diagrams $D$ and $D'$}
            \label{fig:distortion}
        \end{figure}
        Let $D,D'$ be given as in \cref{fig:distortion}. Here $i=5$, and the cells $c,c'$ are highlighted in red. We use a reading order similar to \cref{fig:readingword1}. The jump coordinates for $D$ and $D'$ are as follows:
        \begin{equation*}
        \begin{array}{cc}
            \sigma_D=\begin{array}{|c|c|c|c|}
            \hline
               (5,4)  & (5,3)&(3,2)& (2,1) \\
            \hline
               (6,4)  & \textcolor{red}{(6,5)}&(5,3)&(5,2)\\
            \hline
                (7,4)&(4,6)&(4,3)&(3,5)\\
            \hline
                (8,7)&(7,6)&(6,4)&(4,5)\\
            \hline
                (9,8)&(8,7)&(7,6)&\textcolor{red}{(6,5)}\\
                \hline
            \end{array} &
            \sigma_{D'}=\begin{array}{|c|c|c|c|}
            \hline
               (5,4)  & (5,3)&(3,2)& (2,1) \\
            \hline
               (6,4)  & \textcolor{red}{(6,5)}&(6,3)&(6,2)\\
            \hline
                (7,4)&(4,5)&(4,3)&(3,6)\\
            \hline
                (8,7)&(7,5)&(5,4)&(4,6)\\
            \hline
                (9,8)&(8,7)&(7,5)&\textcolor{red}{(5,6)}\\
                \hline
            \end{array}
        \end{array}
    \end{equation*}
        We write the product in tabular fashion to illustrate how we are distributing the factors at each cell. The factors at cells $c$ and $c'$ are highlighted in red.
    \begin{equation*}
        \Tilde{R}_D = \begin{array}{|c|c|c|c|}
        \hline
               X_{(5,4)}(\gamma_{20})\cdot  & X_{(5,3)}(\gamma_{19})\cdot&X_{(3,2)}(\gamma_{18})\cdot& X_{(2,1)}(\gamma_{17})\cdot \\\hline
               X_{(6,4)}(\gamma_{16})\cdot  & \textcolor{red}{X_{(6,5)}(\gamma_{15})}\cdot&X_{(5,3)}(\gamma_{14})\cdot&X_{(5,2)}(\gamma_{13})\cdot\\\hline
                X_{(7,4)}(\gamma_{12})\cdot&X_{(4,6)}(0)\cdot&X_{(4,3)}(\gamma_{10})\cdot&X_{(3,5)}(0)\cdot\\\hline
                X_{(8,7)}(\gamma_{8})\cdot&X_{(7,6)}(\gamma_{7})\cdot&X_{(6,4)}(\gamma_{6})\cdot&X_{(4,5)}(0)\cdot\\\hline
                X_{(9,8)}(\gamma_{4})\cdot&X_{(8,7)}(\gamma_{3})\cdot&X_{(7,6)}(\gamma_{2})\cdot&\textcolor{red}{X_{(6,5)}(\gamma_{1})}\\\hline
            \end{array}
    \end{equation*}
        Now we conjugate every entry between the red cells by $W_{6,5}(\gamma_1)$ to get the following:
        \begin{equation*}
        \Tilde{R}_D = \begin{array}{|c|c|c|c|}
        \hline
               X_{(5,4)}(\gamma_{20})\cdot  & X_{(5,3)}(\gamma_{19})\cdot&X_{(3,2)}(\gamma_{18})\cdot& X_{(2,1)}(\gamma_{17})\cdot \\\hline
               X_{(6,4)}(\gamma_{16})\cdot  & \textcolor{red}{X_{(6,5)}(\gamma_{15}+\gamma_1)X_{5,6}(-\frac{1}{\gamma_1})X_{6,5}(\gamma_1)}\cdot&X_{(6,3)}(-\gamma_{14}\gamma_1)\cdot&X_{(6,2)}(-\gamma_{13}\gamma_1)\cdot\\\hline
                X_{(7,4)}(\gamma_{12})\cdot&X_{(4,5)}(0)\cdot&X_{(4,3)}(\gamma_{10})\cdot&X_{(3,6)}(0)\cdot\\\hline
                X_{(8,7)}(\gamma_{8})\cdot&X_{(7,5)}(\gamma_{7}\gamma_1)\cdot&X_{(5,4)}(\frac{\gamma_{6}}{\gamma_1})\cdot&X_{(4,6)}(0)\cdot\\\hline
                X_{(9,8)}(\gamma_{4})\cdot&X_{(8,7)}(\gamma_{3})\cdot&X_{(7,5)}(\gamma_{2}\gamma_1)\cdot&\textcolor{red}{X_{6,5}(-\gamma_1)X_{(5,6)}(\frac{1}{\gamma_{1}})}\\\hline
            \end{array}
    \end{equation*}
    Now, at each step, we will move the leftmost excited factors. We colour the excited factors as red, and the cooling site of the leftmost excited factor will be coloured in blue.
    \begin{align*}
    \Tilde{R}_D &= \begin{array}{|c|c|c|c|}
    \hline
               X_{(5,4)}(\gamma_{20})\cdot  & X_{(5,3)}(\gamma_{19})\cdot&X_{(3,2)}(\gamma_{18})\cdot& X_{(2,1)}(\gamma_{17})\cdot \\
               \hline
               X_{(6,4)}(\gamma_{16})\cdot  & X_{(6,5)}(\gamma_{15}+\gamma_1)X_{5,6}(-\frac{1}{\gamma_1})\textcolor{blue}{X_{6,5}(\gamma_1)}\cdot&X_{(6,3)}(-\gamma_{14}\gamma_1)\cdot&X_{(6,2)}(-\gamma_{13}\gamma_1)\cdot\\\hline
                X_{(7,4)}(\gamma_{12})\cdot&X_{(4,5)}(0)\cdot&X_{(4,3)}(\gamma_{10})\cdot&X_{(3,6)}(0)\cdot\\\hline
                X_{(8,7)}(\gamma_{8})\cdot&X_{(7,5)}(\gamma_{7}\gamma_1)\cdot&X_{(5,4)}(\frac{\gamma_{6}}{\gamma_1})\cdot&X_{(4,6)}(0)\cdot\\\hline
                X_{(9,8)}(\gamma_{4})\cdot&X_{(8,7)}(\gamma_{3})\cdot&X_{(7,5)}(\gamma_{2}\gamma_1)\cdot&\textcolor{red}{X_{6,5}(-\gamma_1)}X_{(5,6)}(\frac{1}{\gamma_{1}})\\\hline
            \end{array}\\
             &= \begin{array}{|c|c|c|c|}
             \hline
               X_{(5,4)}(\gamma_{20})\cdot  & X_{(5,3)}(\gamma_{19})\cdot&X_{(3,2)}(\gamma_{18})\cdot& X_{(2,1)}(\gamma_{17})\cdot \\\hline
               \textcolor{blue}{X_{(6,4)}(\gamma_{16})}\cdot  & X_{(6,5)}(\gamma_{15}+\gamma_1)X_{5,6}(-\frac{1}{\gamma_1})\cdot&X_{(6,3)}(-\gamma_{14}\gamma_1)\cdot&X_{(6,2)}(-\gamma_{13}\gamma_1)\cdot\\\hline
                X_{(7,4)}(\gamma_{12})\cdot&X_{(4,5)}(0)\cdot&X_{(4,3)}(\gamma_{10})\cdot&X_{(3,6)}(0)\cdot\\\hline
                X_{(8,7)}(\gamma_{8})\cdot&X_{(7,5)}(\gamma_{7}\gamma_1)\cdot&X_{(5,4)}(\frac{\gamma_{6}}{\gamma_1})\textcolor{red}{X_{(6,4)}(\gamma_6)}\cdot&X_{(4,6)}(0)\cdot\\\hline
                X_{(9,8)}(\gamma_{4})\cdot&X_{(8,7)}(\gamma_{3})\cdot&X_{(7,5)}(\gamma_{2}\gamma_1)\cdot&X_{(5,6)}(\frac{1}{\gamma_{1}})\\\hline
            \end{array}\\
            &= \begin{array}{|c|c|c|c|}
            \hline
               \textcolor{blue}{X_{(5,4)}(\gamma_{20})}\cdot  & X_{(5,3)}(\gamma_{19})\cdot&X_{(3,2)}(\gamma_{18})\cdot& X_{(2,1)}(\gamma_{17})\cdot \\\hline
               X_{(6,4)}(\gamma_{16}+\gamma_6)\cdot  & X_{(6,5)}(\gamma_{15}+\gamma_1)X_{5,6}(-\frac{1}{\gamma_1})\textcolor{red}{X_{(5,4)}(\frac{-\gamma_6}{\gamma_1})}\cdot&X_{(6,3)}(-\gamma_{14}\gamma_1)\cdot&X_{(6,2)}(-\gamma_{13}\gamma_1)\cdot\\\hline
                X_{(7,4)}(\gamma_{12})\cdot&X_{(4,5)}(0)\cdot&X_{(4,3)}(\gamma_{10})\textcolor{red}{X_{(6,3)}(-\gamma_6\gamma_{10})}\cdot&X_{(3,6)}(0)\cdot\\\hline
                X_{(8,7)}(\gamma_{8})\cdot&X_{(7,5)}(\gamma_{7}\gamma_1)\cdot&X_{(5,4)}(\frac{\gamma_{6}}{\gamma_1})\cdot&X_{(4,6)}(0)\cdot\\\hline
                X_{(9,8)}(\gamma_{4})\cdot&X_{(8,7)}(\gamma_{3})\cdot&X_{(7,5)}(\gamma_{2}\gamma_1)\cdot&X_{(5,6)}(\frac{1}{\gamma_{1}})\\\hline
            \end{array}\\
            &= \begin{array}{|c|c|c|c|}
            \hline
               X_{(5,4)}(\gamma_{20}-\frac{\gamma_6}{\gamma_1})\cdot  & X_{(5,3)}(\gamma_{19})\cdot&X_{(3,2)}(\gamma_{18})\cdot& X_{(2,1)}(\gamma_{17})\cdot \\\hline
               \textcolor{blue}{X_{(6,4)}(\gamma_{16}+\gamma_6)}\cdot  & X_{(6,5)}(\gamma_{15}+\gamma_1)\textcolor{red}{X_{(6,4)}(\frac{-\gamma_6\gamma_{15}}{\gamma_1}-\gamma_6)}X_{5,6}(-\frac{1}{\gamma_1})\cdot&X_{(6,3)}(-\gamma_{14}\gamma_1)\cdot&X_{(6,2)}(-\gamma_{13}\gamma_1)\cdot\\\hline
                X_{(7,4)}(\gamma_{12})\cdot&X_{(4,5)}(0)\cdot&X_{(4,3)}(\gamma_{10})\textcolor{red}{X_{(6,3)}(-\gamma_6\gamma_{10})}\cdot&X_{(3,6)}(0)\cdot\\\hline
                X_{(8,7)}(\gamma_{8})\cdot&X_{(7,5)}(\gamma_{7}\gamma_1)\cdot&X_{(5,4)}(\frac{\gamma_{6}}{\gamma_1})\cdot&X_{(4,6)}(0)\cdot\\\hline
                X_{(9,8)}(\gamma_{4})\cdot&X_{(8,7)}(\gamma_{3})\cdot&X_{(7,5)}(\gamma_{2}\gamma_1)\cdot&X_{(5,6)}(\frac{1}{\gamma_{1}})\\\hline
            \end{array}\\
            &= \begin{array}{|c|c|c|c|}
            \hline
               X_{(5,4)}(\gamma_{20}-\frac{\gamma_6}{\gamma_1})\cdot  & X_{(5,3)}(\gamma_{19})\cdot&X_{(3,2)}(\gamma_{18})\cdot& X_{(2,1)}(\gamma_{17})\cdot \\\hline
               X_{(6,4)}(\gamma_{16}-\frac{\gamma_6\gamma_{15}}{\gamma_1})\cdot  & X_{(6,5)}(\gamma_{15}+\gamma_1)X_{5,6}(-\frac{1}{\gamma_1})\cdot&\textcolor{blue}{X_{(6,3)}(-\gamma_{14}\gamma_1)}\cdot&X_{(6,2)}(-\gamma_{13}\gamma_1)\cdot\\\hline
                X_{(7,4)}(\gamma_{12})\cdot&X_{(4,5)}(0)\cdot&X_{(4,3)}(\gamma_{10})\textcolor{red}{X_{(6,3)}(-\gamma_6\gamma_{10})}\cdot&X_{(3,6)}(0)\cdot\\\hline
                X_{(8,7)}(\gamma_{8})\cdot&X_{(7,5)}(\gamma_{7}\gamma_1)\cdot&X_{(5,4)}(\frac{\gamma_{6}}{\gamma_1})\cdot&X_{(4,6)}(0)\cdot\\\hline
                X_{(9,8)}(\gamma_{4})\cdot&X_{(8,7)}(\gamma_{3})\cdot&X_{(7,5)}(\gamma_{2}\gamma_1)\cdot&X_{(5,6)}(\frac{1}{\gamma_{1}})\\\hline
            \end{array}\\
              &=  \begin{array}{|c|c|c|c|}
              \hline
               X_{(5,4)}(\gamma_{20}-\frac{\gamma_6}{\gamma_1})\cdot  & X_{(5,3)}(\gamma_{19})\cdot&X_{(3,2)}(\gamma_{18})\cdot& X_{(2,1)}(\gamma_{17})\cdot \\\hline
               X_{(6,4)}(\gamma_{16}-\frac{\gamma_6\gamma_{15}}{\gamma_1})\cdot  & X_{(6,5)}(\gamma_{15}+\gamma_1)X_{5,6}(-\frac{1}{\gamma_1})\cdot&X_{(6,3)}(-\gamma_{14}\gamma_1-\gamma_6\gamma_{10})\cdot&X_{(6,2)}(-\gamma_{13}\gamma_1)\cdot\\\hline
                X_{(7,4)}(\gamma_{12})\cdot&X_{(4,5)}(0)\cdot&X_{(4,3)}(\gamma_{10})\cdot&X_{(3,6)}(0)\cdot\\\hline
                X_{(8,7)}(\gamma_{8})\cdot&X_{(7,5)}(\gamma_{7}\gamma_1)\cdot&X_{(5,4)}(\frac{\gamma_{6}}{\gamma_1})\cdot&X_{(4,6)}(0)\cdot\\\hline
                X_{(9,8)}(\gamma_{4})\cdot&X_{(8,7)}(\gamma_{3})\cdot&X_{(7,5)}(\gamma_{2}\gamma_1)\cdot&X_{(5,6)}(\frac{1}{\gamma_{1}})\\\hline
            \end{array}
    \end{align*}
    The factors arranged in cells like above is the $D'$-distortion of $\Tilde{R}_{D}$. 
    \end{example}
    Note that all cooling sites are cells where $D'$ has black stones. Furthermore, when an excited factor passes through a cell, it doesn't change the entries of the non-excited factors in the cell. These observations give us the following Lemma:
    \begin{lemma}\label{lemma:newcellparameters}
        Let $D,D',c,c'$ and $i$ be as in \cref{theorem:closurerelation}. Let $\{\gamma_b\}$ be the set of parameters realizing $\Tilde{R}_D$. Let $A_b$ be the factors at cell $b$ in $D'$ distortion of $\Tilde{R}_D$. Then:
        \begin{equation*}
            A_b = \begin{cases}
                X_{i+1,i}(\gamma_c+\gamma_{c'})X_{i,i+1}(-\frac{1}{\gamma_c}) & \texttt{if\;}b=c'\\
                X_{i,i+1}(\frac{1}{\gamma_c})&\texttt{if\;} b=c\\
                X_{\sigma_{D'}(b)}(\gamma'_b)& \texttt{if\;}b\succ c \texttt{\;and\;}D'(b)=+\\
                X_{\sigma_{D'}(b)}(\gamma'_b+C)& \texttt{if\;}b\succ c \texttt{\;and\;}D'(b)=\bullet\\
                X_{\sigma_{D'}(b)}(\gamma_b)&\texttt{otherwise}
            \end{cases}
        \end{equation*}
        where $C$ is the sum of all entries in the excited factors which had $b$ as their cooling sites, and if $\sigma_{D'}(b) = (k,l)$ then
        \begin{equation*}
            \gamma_b'=\begin{cases}
                \gamma_b&\texttt{if }\{k,l\}\cap \{i,i+1\} = \emptyset\\
                -\gamma_b\gamma_c&\texttt{if } k=i,l<i\\
                \gamma_b\gamma_c &\texttt{if } l=i+1,k>i+1\\
                -\frac{\gamma_b}{\gamma_c} &\texttt{if } l=i, k>i+1\\
                \frac{\gamma_b}{\gamma_c}&\texttt{if }k=i+1,l<i\\
        \end{cases}.
        \end{equation*}
        
    \end{lemma}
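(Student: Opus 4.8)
The plan is to prove the lemma by following the resident factor of each cell through every elementary step of the distortion algorithm defined just above, maintaining the invariant that at each stage every cell $b$ carries exactly one resident factor with subscript $\sigma_{D'}(b)$ (the cells $c,c'$ being the only exceptions, and a few excited factors being temporarily in transit). The starting point is the product expression $\Tilde{R}_D=\prod_b X_{\sigma_D(b)}(\gamma_b)$ of \cref{prodlemma}. The first thing to settle is the behaviour of the cells that are never conjugated. Because $c,c'$ form a crossing--uncrossing pair of the adjacent pipes $i,i+1$, the two strands are interchanged in $D'$ relative to $D$ only along the segment they traverse between $c$ and $c'$, and every cell on that segment satisfies $c\preceq b\preceq c'$. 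Hence for every $b$ outside this interval --- in particular for the ``otherwise'' cells and for the cells $b\succ c$ lying northwest of $c'$ --- one has $\sigma_D(b)=\sigma_{D'}(b)$, and the resident factor $X_{\sigma_{D'}(b)}(\gamma_b)$ is left untouched by the conjugation; this yields both the last line of the statement and the $\gamma_b'=\gamma_b$ reading of the unconjugated $b\succ c$ cells.

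For a cell $b$ with $c\prec b\prec c'$ the conjugation replaces $X_{\sigma_D(b)}(\gamma_b)$ by $W_{(i+1,i)}(-\gamma_c)X_{\sigma_D(b)}(\gamma_b)W_{(i+1,i)}(\gamma_c)$. I would dispose of this by the four-way case split already recorded before the lemma: when $\sigma_D(b)$ avoids $\{i,i+1\}$ the factor commutes past $W_{(i+1,i)}$ and is unchanged, and otherwise one of the identities (2)--(5) of \cref{identites} applies, sending the subscript to $\sigma_{D'}(b)$ (the transposition $t_{i,i+1}$ acting on the jump coordinate mirrors exactly the relabelling of the pipes inside the inverted region) and rescaling $\gamma_b$ to the recorded value $\gamma_b'$. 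The conjugation also deposits $X_{(i+1,i)}(\gamma_c+\gamma_{c'})X_{(i,i+1)}(-1/\gamma_c)X_{(i+1,i)}(\gamma_c)$ at $c'$ and $X_{(i+1,i)}(-\gamma_c)X_{(i,i+1)}(1/\gamma_c)$ at $c$.

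Next I would run the relocation phase and verify the invariant survives each move. Pushing an excited factor left across a cell in Cases 1--3 is pure commutation and leaves that cell's resident factor intact; in Case 4 one uses identity (7) of \cref{identites}, whose essential feature is that it reproduces the host factor $X_{\sigma_{D'}(b)}(\cdot)$ verbatim while merely transporting the moving factor and spawning one new excited factor to its left. Thus no move alters a resident subscript or parameter except at a cooling site, where an excited factor of subscript $\sigma_{D'}(d')$ meets the resident factor of the black stone $d'$ and merges additively via $X_p(\alpha)X_p(\beta)=X_p(\alpha+\beta)$. That every excited factor has a black stone of the correct subscript waiting for it is precisely \cref{blackstonelemma}, while termination and the inequality control keeping subscripts lower-triangular is \cref{lem:distort}; together they guarantee the process halts with no factor in transit. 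At that point each $\bullet$-cell $b\succ c$ has absorbed exactly the excited factors routed to it, so its parameter becomes $\gamma_b'+C$ with $C$ their sum, giving the $D'(b)=\bullet$ line, while each $+$-cell $b\succ c$ retains its single resident factor, giving the $D'(b)=+$ line. Finally, $X_{(i+1,i)}(-\gamma_c)$ is the first factor relocated: its destination is $c'$, where it cancels the standing $X_{(i+1,i)}(\gamma_c)$ via $X_p(\alpha)X_p(-\alpha)=1$, leaving $A_{c'}=X_{(i+1,i)}(\gamma_c+\gamma_{c'})X_{(i,i+1)}(-1/\gamma_c)$ and $A_c=X_{(i,i+1)}(1/\gamma_c)$.

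The step I expect to be the main obstacle is the third paragraph: one must argue cleanly that relocating excited factors never corrupts the resident data of any host cell --- including a $\bullet$-cell that is simultaneously a cooling site for some other factor, and the doubly-occupied cell $c'$ --- so that the final parameter at each cell is a well-defined function of the inputs, independent of the order in which excited factors are moved. Pinning down this order-independence, together with the exact matching of each excited factor to its unique cooling black stone supplied by \cref{blackstonelemma}, is the crux; the remaining bookkeeping (the precise $\gamma_b'$ in each of the four conjugation cases) is a routine application of \cref{identites}.
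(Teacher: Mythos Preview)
Your proposal is correct and follows essentially the same approach as the paper. The paper's own ``proof'' consists of the two sentences immediately preceding the lemma statement: cooling sites are always black stones in $D'$, and moving an excited factor through a cell never alters that cell's non-excited (resident) factor. You have simply unpacked those two observations in full detail---tracking the initial conjugation via identities (2)--(5) of \cref{identites}, the relocation moves via identity (7), and the additive merging at cooling sites---which is exactly what a complete proof of the lemma requires.
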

    \begin{proof}[Proof of \cref{theorem:closurerelation}]
        Let $R_{D'}$ be a point in $\mathcal{D}_D'$, realized in the restricted path parametrization, and let $\beta_b$ be the parameters at each cell which realize $R_{D'}$. Using the parameters, we construct $\Tilde{R}_{D'}$. We also construct $D'$-distortion of $\Tilde{R}_D$. Now we use \cref{lemma:newcellparameters} to get the following set of equations at each cell $b$:
        \begin{align*}
            \gamma_c+\gamma_{c'}&= \beta_{c'}& \mathrm{\;if\;}b=c'\\
                \gamma'_b&=\beta_b& \mathrm{if\;}b\succ c \mathrm{\;and\;}D'(b)=+\\
                \gamma'_b+C&=\beta_b& \mathrm{if\;}b\succ c \mathrm{\;and\;}D'(b)=\bullet\\
                \gamma_b&=\beta_b&\mathrm{otherwise}
        \end{align*}
        Using the above equations we solve for $\gamma_b$ where $b\not =c$ in terms of $\{\beta_{b'}\}_{b'\in\lambda}$ and $\gamma_c$ in the reading order. We will always be able to solve this because the above system of equations is triangular ($C$ will have parameters showing up from cells before $b$ in our chosen reading order). Note from the definition of $\gamma_b'$, we get that if $D(b)= D'(b) = +$, then $\gamma_b\not=0\iff\gamma_c\not=0$, so if we truncate $\Tilde{R}_D$ to get $R_D$, we get $R_D\in \mathcal{D}_D$ for all values of $\gamma_c\not=0$. Moreover, the way we have set up the equations, it is obvious that $\lim_{\gamma_c\to \infty}\Tilde{R}_D = \Tilde{R_{D'}}$. Since limits behave well with respect to truncation, we get $\lim_{\gamma_c\to\infty}R_D = R_{D'}$. Since $R_{D'}$ was an arbitrary point in $\mathcal{D}_{D'}$ and we obtained a sequence of point in $\mathcal{D}_D$ converging to $R_{D'}$, we get the required closure relation.
    \end{proof}
    \begin{example} \label{ex:closure}
        We continue \cref{ex:distort}. Let $R_{D'}$ be an arbitrary point in $\mathcal{D}_{D'}$ realized in the restricted path parametrization, and let $\beta_b$ be the set of parameters that realize $R_{D'}$. Now using \cref{prodlemma}, with the reading order in \cref{ex:distort}, we get:
        \begin{equation*}
    \Tilde{R}_{D'} = \begin{array}{|c|c|c|c|}\hline
               X_{(5,4)}(\beta_{20})\cdot  & X_{(5,3)}(\beta_{19})\cdot&X_{(3,2)}(\beta_{18})\cdot& X_{(2,1)}(\beta_{17})\cdot \\\hline
               X_{(6,4)}(\beta_{16})\cdot  & X_{(6,5)}(\beta_{15})\cdot&X_{(6,3)}(\beta_{14})\cdot&X_{(6,2)}(\beta_{13})\cdot\\\hline
                X_{(7,4)}(\beta_{12})\cdot&X_{(4,5)}(0)\cdot&X_{(4,3)}(\beta_{10})\cdot&X_{(3,6)}(0)\cdot\\\hline
                X_{(8,7)}(\beta_{8})\cdot&X_{(7,5)}(\beta_{7})\cdot&X_{(5,4)}(\beta_6)\cdot&X_{(4,6)}(0)\cdot\\\hline
                X_{(9,8)}(\beta_{4})\cdot&X_{(8,7)}(\beta_{3})\cdot&X_{(7,5)}(\beta_2)\cdot&X_{(5,6)}(0)\\\hline
            \end{array}
    \end{equation*}
    The $D'$ distortion of $\Tilde{R}_D$ is:
    \begin{equation*}
        \Tilde{R}_D = \begin{array}{|c|c|c|c|}
              \hline
               X_{(5,4)}(\gamma_{20}-\frac{\gamma_6}{\gamma_1})\cdot  & X_{(5,3)}(\gamma_{19})\cdot&X_{(3,2)}(\gamma_{18})\cdot& X_{(2,1)}(\gamma_{17})\cdot \\\hline
               X_{(6,4)}(\gamma_{16}-\frac{\gamma_6\gamma_{15}}{\gamma_1})\cdot  & X_{(6,5)}(\gamma_{15}+\gamma_1)X_{5,6}(-\frac{1}{\gamma_1})\cdot&X_{(6,3)}(-\gamma_{14}\gamma_1-\gamma_6\gamma_{10})\cdot&X_{(6,2)}(-\gamma_{13}\gamma_1)\cdot\\\hline
                X_{(7,4)}(\gamma_{12})\cdot&X_{(4,5)}(0)\cdot&X_{(4,3)}(\gamma_{10})\cdot&X_{(3,6)}(0)\cdot\\\hline
                X_{(8,7)}(\gamma_{8})\cdot&X_{(7,5)}(\gamma_{7}\gamma_1)\cdot&X_{(5,4)}(\frac{\gamma_{6}}{\gamma_1})\cdot&X_{(4,6)}(0)\cdot\\\hline
                X_{(9,8)}(\gamma_{4})\cdot&X_{(8,7)}(\gamma_{3})\cdot&X_{(7,5)}(\gamma_{2}\gamma_1)\cdot&X_{(5,6)}(\frac{1}{\gamma_{1}})\\\hline
            \end{array}
     \end{equation*}
     Now equating the entries that show up in our factors at each cell, we get the following set of equations at each cell:
    \begin{align*}
        \gamma_{20}-\frac{\gamma_{6}}{\gamma_1}&=\beta_{20} & \gamma_{19}&=\beta_{19} & \gamma_{18}&=\beta_{18} & \gamma_{17}&=\beta_{17}\\
        \gamma_{16}-\frac{\gamma_6\gamma_{15}}{\gamma_1} &= \beta_{16}& \gamma_{15}+\gamma_1 &=\beta_{15}&-\gamma_{14}\gamma_1- \gamma_6\gamma_{10}&=\beta_{14}& -\gamma_{13}\gamma_1&=\beta_{13}\\
        \gamma_{12}&=\beta_{12}&&&\gamma_{10}&=\beta_{10}&&\\
        \gamma_8&=\beta_{8}& \gamma_{7}\gamma_1&=\beta_7&\frac{\gamma_6}{\gamma_1}&=\beta_6&&\\
        \gamma_4&=\beta_4&\gamma_3&=\beta_3&\gamma_2\gamma_1&=\beta_2&&
    \end{align*}
    If we solve for $\gamma_i, i\not=1$ in terms of $\beta_b$ and $\gamma_1$, we get $\lim_{\gamma_1\to \infty}\Tilde{R}_D = \Tilde{R}_{D'}$. Finally truncating the matrices, we get $\lim_{\gamma_1\to\infty}R_D = R_{D'}$. Since $R_{D'}$ was an arbitrary point of $\mathcal{D}_{D'}$, we get $\mathcal{D}_{D'}\subset \overline{\mathcal{D}_D}$.
    \end{example}
    \subsection{Generalizing \cref{theorem:closurerelation}} \label{sec:generalclosure}
    In this section, we will look at various generalizations of \cref{theorem:closurerelation}. We can drop the requirement $\pi(D') = 1$, by looking at the components in the Talaska-Williams parametrization. We also give a conjecture that allows us to remove the restriction on the labels of the pipes. We start by showing some lemmas:
    \begin{lemma}\label{trunc}
        Let D, D' be two Go-diagrams inside the same partition $\lambda$, and let $D_{\textup{\textbf{trunc}}}$ and $D'_{\textup{\textbf{trunc}}}$ be Go-diagrams obtained by removing the leftmost column (or topmost row) from $D$ and $D'$ respectively. If we have $\mathcal{D}_{D'}\subset \overline{\mathcal{D}_D}$ then $\mathcal{D}_{D'_{\textup{\textbf{trunc}}}}\subset \overline{\mathcal{D}_{D_{\textup{\textbf{trunc}}}}}$.
    \end{lemma}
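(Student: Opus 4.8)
The plan is to realise each truncation as the restriction to a single Schubert cell of a continuous map, and then to use the elementary fact that a continuous $T$ satisfies $T(\overline{A})\subseteq\overline{T(A)}$. The point is that $D$ and $D'$ share the shape $\lambda$, so both $\mathcal{D}_D$ and $\mathcal{D}_{D'}$ lie in the one Schubert cell $\Omega_\lambda$, on which every subspace has a unique row-reduced echelon representative and $\Omega_\lambda\simeq\mathbb{F}^{|\lambda|}$ via its free entries. Removing the topmost row of $\lambda$ is a matroid contraction at the pivot $m=\min I_\lambda$ (the southeast vertical edge of the top row): on echelon representatives it deletes the pivot-$m$ row and the column $m$, landing in $Gr_{k-1,n-1}$. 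Removing the leftmost column is a matroid deletion at $m'=\max([n]\setminus I_\lambda)$ (the southeast edge at the foot of the leftmost column): on echelon representatives it simply deletes column $m'$, landing in $Gr_{k,n-1}$. In either case the entries of the output are read off polynomially from the free echelon entries of the input, so $T$ is a morphism defined on all of $\Omega_\lambda$. Working in echelon coordinates here is exactly what lets me avoid the proper closed locus on which a naive ``intersect with a coordinate hyperplane'' or ``project out a coordinate'' description of $T$ would be undefined.

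First I would verify the compatibility $T(\mathcal{D}_E)=\mathcal{D}_{E_{\textbf{trunc}}}$ for every Go-diagram $E$ of shape $\lambda$, in particular for $E=D$ and $E=D'$. For the topmost-row case this is precisely the block decomposition obtained in the proof of \cref{twtransform}: a point of $\mathcal{D}_E$ has an echelon form whose first pivot row and first pivot column split off the echelon form $W_{E_{\textbf{trunc}}}$ of a point of $\mathcal{D}_{E_{\textbf{trunc}}}$, exactly as in the block form $W_E=\left[\begin{smallmatrix}w_1\\ 0 & W_{E'}\end{smallmatrix}\right]$ with $W_{E'}=W_{E_{\textbf{trunc}}}$, and letting the parameters of $E$ sweep their allowed ranges realises all of $\mathcal{D}_{E_{\textbf{trunc}}}$. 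For the leftmost-column case I would invoke the mirror statement, which follows either by running the induction of \cref{twtransform} on $n-k$ in place of $k$, or by transporting the row statement through the duality $Gr_{k,n}\simeq Gr_{n-k,n}$ of \cref{eq:dual}: under this isomorphism (and the dual restricted paths) transposing the Young diagram interchanges leftmost-column removal with topmost-row removal, so the column statement is the row statement applied to the dual diagram.

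With these two ingredients the conclusion is immediate and topology-agnostic. Since $\mathcal{D}_D\subseteq\Omega_\lambda$, the closure taken inside $\Omega_\lambda$ satisfies $\overline{\mathcal{D}_D}\cap\Omega_\lambda=\overline{\mathcal{D}_D}^{\,\Omega_\lambda}$, so the hypothesis $\mathcal{D}_{D'}\subseteq\overline{\mathcal{D}_D}$ together with $\mathcal{D}_{D'}\subseteq\Omega_\lambda$ gives $\mathcal{D}_{D'}\subseteq\overline{\mathcal{D}_D}^{\,\Omega_\lambda}$. Applying the continuous map $T$ on $\Omega_\lambda$ and using the compatibility above yields
\[
\mathcal{D}_{D'_{\textbf{trunc}}}=T(\mathcal{D}_{D'})\subseteq T\big(\overline{\mathcal{D}_D}^{\,\Omega_\lambda}\big)\subseteq\overline{T(\mathcal{D}_D)}=\overline{\mathcal{D}_{D_{\textbf{trunc}}}},
\]
which is the desired inclusion; a morphism is continuous for both the Zariski and the analytic topology, so the argument is insensitive to the choice. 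I expect the main obstacle to be the leftmost-column half of the compatibility statement: one must pin down the correct mirror of the \cref{twtransform} block decomposition and check that column deletion genuinely carries $\mathcal{D}_E$ onto $\mathcal{D}_{E_{\textbf{trunc}}}$ with matching parameters, since unlike the row case this is not written out in the excerpt and the bookkeeping of which coordinate is deleted must be done carefully.
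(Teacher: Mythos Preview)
Your proposal is correct and takes essentially the same approach as the paper: both rely on the block structure of the Talaska--Williams parametrization to see that truncation carries $\mathcal{D}_E$ onto $\mathcal{D}_{E_{\textbf{trunc}}}$, and both reduce the leftmost-column case to the topmost-row case via Grassmannian duality. The only difference is packaging---the paper phrases the argument as ``lift a point of $\mathcal{D}_{D'_{\textbf{trunc}}}$ to $\mathcal{D}_{D'}$, approximate by a sequence in $\mathcal{D}_D$, then take submatrices,'' whereas you encode the same steps as applying a single continuous truncation map $T$ on $\Omega_\lambda$ and invoking $T(\overline{A})\subseteq\overline{T(A)}$.
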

    \begin{proof}
        We prove the statement for the case where the topmost row is removed; the statement about the columns is obtained by examining the dual of the Grassmannian. Given a point $V \in \mathcal{D}_{D'_{\textup{\textbf{trunc}}}}$, we can identify the edge weights in the Talaska William network so that the weight matrix of that network realizes $V$. Let $W_V$ be the representative. Now we can generate a matrix representative $M_V$ of an element in $\mathcal{D}(D')$ from $W_V$ by arbitrarily deciding the weights for the first row, and keeping the others the same as those for $W_V$. Therefore, $W_V$ is a submatrix of $M_V$. Now restricting ourselves to the coordinate chart $\Delta_{I_\lambda} = 1$ in $Gr_{k,n}$, we get a sequence of matrices in $\mathcal{D}_D$ which converge pointwise to $M_V$. The corresponding sequence of submatrices lies in $\mathcal{D}_{D_{\textup{\textbf{trunc}}}}$ and they converge to $W_V$.
    \end{proof}
    Given a Go-diagram, we can make a bigger Go-diagram by padding that Go-diagram with a row on top (or column on left) and filling the new cells with either $+$ or $\bullet$, while making sure the forbidden configuration doesn't arise.
    \begin{lemma}\label{pad}
        Let $D$ be a Go-diagram in $\lambda$. There exists a sequence of padding operations where we pad $D$ with columns on the left and rows on the top, such that the resultant Go-diagram is a subexpression of identity. Moreover, this padding sequence is entirely determined by $\pi(D)$.
    \end{lemma}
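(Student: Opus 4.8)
The plan is to reduce the statement to a single rigidity observation about padding cells together with a bubble-sort argument on $\pi(D)$. First I would record that the filling of every new cell is \emph{forced}. At a padding cell the two incoming pipes are either sorted or inverted. If they are sorted, the only admissible tile is the elbow, giving configuration $A$, i.e.\ a $+$: a crossing tile there would be a $\circ$, which is excluded from padding. If they are inverted, the only admissible tile is the crossing, giving configuration $D$, i.e.\ a $\bullet$: an elbow tile there would be the forbidden configuration $B$. Hence any sequence of columns added on the left and rows added on top produces a Go-diagram whose new cells carry only $+$ and $\bullet$, and the filling is a deterministic function of how the pipes of $D$ leave its northwest boundary. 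Since that exit data is exactly the permutation $\pi(D)$, both the admissible fillings and, as I explain below, the shape one must pad are functions of $\pi(D)$ alone, which will yield the final ``determined by $\pi(D)$'' clause.

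Next I would view the padding region as an extension of the $n$ pipes of $D$ past the northwest boundary. Because no new $\circ$ is ever created, reading through the padded cells changes the running permutation only by adjacent transpositions that \emph{decrease} length (each $\bullet$ uncrosses an inverted pair) or not at all (each $+$). Thus the permutation descends monotonically in length from $\pi(D)=v$ toward $1$, and producing a subexpression of the identity amounts to uncrossing every pair that is inverted in $v$. Here I would use that a single left column contributes the consecutive block of transpositions $s_{n-k+1},s_{n-k+2},\dots,s_{n}$ and a single top row the consecutive decreasing block $s_{n-k},s_{n-k-1},\dots,s_{1}$, so that each padding move acts as one bubble-sort sweep on the pipes crossing the corresponding boundary segment.

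I would then prove existence by induction on $\ell(v)$: pad generously enough (a large staircase whose inner boundary is that of $\lambda$) that every inverted pair of pipes is guaranteed to meet in some padding cell while still inverted, forcing its uncrossing there, with all other new cells becoming harmless $+$'s. Each sweep removes a prescribed set of adjacent inversions of the current permutation and is itself determined by that permutation, so after at most $\binom{n}{2}$ uncrossings the running permutation is the identity; because the inversions to be removed, the order of the sweeps, and the forced fillings are all functions of $v=\pi(D)$, the whole padding sequence depends only on $\pi(D)$. Finally I would check the distinguished condition directly, in a reading word that reads the padded cells before the cells of $\lambda$: each forced $\bullet$ is then a genuine length-decreasing step and each forced $+$ a skip, so the associated subexpression of $\textbf{w}_{\lambda'}$ is distinguished with $\pi=1$.

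The main obstacle is the fit in the previous paragraph: the padding moves are rigid (whole columns on the left, whole rows on the top, with fillings one cannot choose), so I must show they are nevertheless flexible enough to sort an arbitrary $v$ down to exactly $1$, without getting stuck on a residual inversion that no admissible sweep can reach and without overshooting. I expect to control this by choosing the staircase large enough that the wiring diagram of the padding literally contains a bubble-sort network for $v$, the surplus cells contributing only $+$'s; verifying that this network remains a distinguished subexpression under the chosen reading order is the remaining delicate, if routine, bookkeeping. This is exactly what is needed downstream: applying the construction to both $D$ and $D'$ (which share the same $\pi$, since passing from $D'$ to $D$ only deletes a crossing--uncrossing pair) lets one invoke \cref{theorem:closurerelation} on the padded diagrams and then descend via \cref{trunc}.
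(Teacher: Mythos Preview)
Your rigidity observation---that every padding cell has a forced filling, $+$ if the incoming pipes are sorted and $\bullet$ if inverted---is correct and is the heart of the matter in both approaches. Where you diverge from the paper is in how you exploit it. You propose to pad \emph{generously} with a large staircase and argue via bubble sort that all inversions of $v=\pi(D)$ eventually get uncrossed; the paper instead pads \emph{one step at a time}, choosing at each step which kind of padding to apply by looking at the first descent of the current permutation. Concretely: find the smallest $i$ with $v(i)>v(i+1)$; if position $i$ lies on the west boundary, add a column on the left; if on the north boundary, add a row on top; if at the corner, add both. In each case the forced filling places a $\bullet$ precisely at that descent, so $\ell(v)$ drops strictly, and induction on length finishes. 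Because the choice at every step is read off from the current permutation alone, the ``determined by $\pi(D)$'' clause is immediate.

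Your route should also succeed once the details are nailed down---overshooting is harmless since surplus cells become $+$'s, and the bubble-sort picture is sound---but it leaves you with exactly the bookkeeping you flag as the main obstacle: specifying the staircase size as an explicit function of $\pi(D)$, and checking that alternating row/column sweeps reach every inversion, including those that straddle the northwest corner. The paper's first-descent argument buys you both brevity (three sentences) and determinism without any of that; your approach buys a more uniform padded shape at the cost of a longer argument. For the downstream application you correctly identify (padding $D$ and $D'$ identically, then invoking \cref{theorem:closurerelation} and \cref{trunc}), either works, but the targeted version is cleaner.
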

    \begin{proof}
        If $D$ is already not a subexpression of identity, then let $v = \pi(D)$. We can find the smallest $i$ such that $v(i) > v(i + 1)$. Now, if $v(i)$ is on the west boundary (resp. north boundary) and not the topmost edge (resp. leftmost edge), then padding $D$ on the left (resp. top) will force us to introduce a black stone. If $v(i)$ is on the boundary of the top-left corner, then we pad $D$ with both a row and a column, which will force a black stone on the new corner. In all cases, the length of the resultant permutation decreases, so we can use induction on the length of $v$ to finish the proof.
    \end{proof}
    Now using the above Lemmas, \cref{theorem:closurerelation}, and noting that the padding sequence in Lemma \ref{pad} is only dependent on the permutation of the Go-diagram, we get the following:
    \begin{theorem}\label{thm:closemain}
        Let $D'$ be a Go-diagram in some partition $\lambda$, and let $c\prec c'$ be a crossing and uncrossing pair in $D'$ with $D'(c) = \circ$. Suppose the pipes passing through $c$ are labelled $i$ and $i+1$ for some $1\leq i<n$.
        Let $D$ be the diagram obtained by replacing the stones at $c$ and $c'$ with $+$.
        Then $D$ is a Go-diagram and $\mathcal{D}_D' \subset \overline{\mathcal{D}_D}$.
    \end{theorem}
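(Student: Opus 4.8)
The plan is to reduce \cref{thm:closemain} to the already-established special case \cref{theorem:closurerelation} by enlarging the diagrams so that their associated permutation becomes the identity, applying the base case in that enlarged setting, and then shrinking back down via \cref{trunc}. Two preliminary observations make this reduction possible. First, $D$ is a Go-diagram: the only way $D$ could contain a forbidden configuration $B$ is through a pipe whose label $k$ differs from $i$ and $i+1$, since at $c$ and $c'$ we have merely replaced the stones on pipes $i,i+1$ by elbows; but such a configuration would already be present in $D'$, contradicting that $D'$ is a Go-diagram. Second, $\pi(D)=\pi(D')$: turning the crossing at $c$ and the matching uncrossing at $c'$ into elbows removes a crossing together with the uncrossing that undoes it, so pipes $i$ and $i+1$ exit the northwest boundary in exactly the same positions as before, and every other pipe is untouched.

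By \cref{pad}, there is a sequence of padding operations---adding columns on the left and rows on the top, each new cell filled with a $+$ or a $\bullet$---that turns $D'$ into a Go-diagram $\widehat{D'}$ which is a subexpression of the identity, i.e.\ $\pi(\widehat{D'})=1$. Because this padding sequence depends only on $\pi(D')$ and $\pi(D)=\pi(D')$, the very same sequence turns $D$ into a Go-diagram $\widehat{D}$, and the newly added cells carry identical fillings in $\widehat{D}$ and $\widehat{D'}$. Hence $\widehat{D}$ and $\widehat{D'}$ differ only at $c$ and $c'$. Since padding only introduces cells lying weakly to the northwest of the original diagram, every cell $b$ with $c\prec b\prec c'$ is left unchanged, so $c\prec c'$ is still a crossing--uncrossing pair in $\widehat{D'}$ with $\widehat{D'}(c)=\circ$, and $\widehat{D}$ is obtained from $\widehat{D'}$ precisely by switching the stones at $c$ and $c'$ to $+$. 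Applying \cref{theorem:closurerelation} to $\widehat{D'}$, whose permutation is now the identity, then yields $\mathcal{D}_{\widehat{D'}}\subset\overline{\mathcal{D}_{\widehat{D}}}$.

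Finally, \cref{trunc} tells us that removing the leftmost column or the topmost row preserves containment of one Deodhar component in the closure of another. Undoing the padding one step at a time---each step being exactly such a truncation---transports $\mathcal{D}_{\widehat{D'}}\subset\overline{\mathcal{D}_{\widehat{D}}}$ back down to $\mathcal{D}_{D'}\subset\overline{\mathcal{D}_{D}}$, which is the desired conclusion.

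The step I expect to require the most care is confirming that \cref{theorem:closurerelation} genuinely applies to $\widehat{D'}$---specifically, that the two pipes through $c$ remain \emph{adjacently} labelled after padding. Padding changes the edge labels $1,\dots,n$, so one must check that the induced relabelling of the pipes is order-preserving, so that the consecutive pair $i,i+1$ through $c$ in $D'$ becomes a consecutive pair $i',i'+1$ in $\widehat{D'}$ rather than being split apart by a newly created pipe. This should follow from the fact that the pipes introduced by padding start at new southeast boundary edges that sit at the extreme ends of the labelling and never fall strictly between the southeast endpoints of pipes $i$ and $i+1$; granting this monotonicity, the hypotheses of the base case are met and the argument closes.
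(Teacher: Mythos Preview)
Your argument is correct and follows the same route as the paper: pad using \cref{pad} (which depends only on $\pi(D')=\pi(D)$), apply \cref{theorem:closurerelation} in the padded setting, then undo the padding via repeated use of \cref{trunc}. The adjacency concern you flag at the end is legitimate and resolves exactly as you suggest---padding a row on top inserts a new boundary edge at the northeast extreme (shifting all labels by one) and padding a column on the left inserts one at the southwest extreme, so no new label is ever inserted strictly between the southeast endpoints of the two pipes through $c$; the paper leaves this point implicit.
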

    \begin{figure}[h]
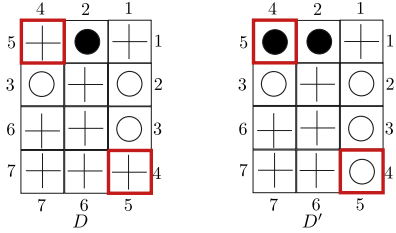

            \centering
            \includesvg[width=30mm, height=30mm]{paddconsD}
            \hspace{2em}
            \includesvg[width=30mm, height=30mm]{paddconsD_2}
            \caption{Go-diagrams $D$ and $D'$}
            \label{fig:paddex}
        \end{figure}
    \begin{figure}[h]
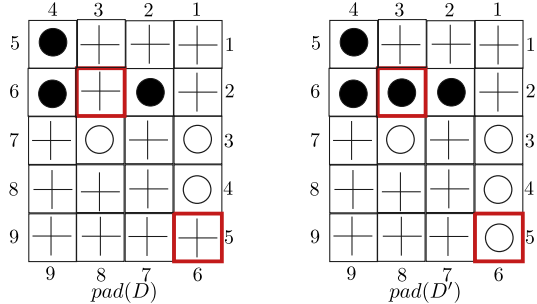

            \centering
            \includesvg[width=30mm, height=40mm]{paddedconsD}
            \hspace{2em}
            \includesvg[width=30mm, height=40mm]{paddedconsD_2}
            \caption{Go-diagrams $pad(D)$ and $pad(D')$}
            \label{fig:paddedex}
        \end{figure}
    We give an example to illustrate the ideas above:
    \begin{example}
        Let the Go-diagrams $D$ and $D'$ be the ones in \cref{fig:paddex}. Here, the cells with a crossing-uncrossing pair are highlighted in red. We use $pad(D), pad(D')$ to denote the padding obtained in \cref{pad}, these are the ones in \cref{fig:paddedex}. Note they have the same shape as $\pi(D)=\pi(D')$. Using \cref{theorem:closurerelation} we have $\mathcal{D}_{pad(D')}\subset \overline{\mathcal{D}_{pad(D)}}$. Now using \cref{trunc} we get $\mathcal{D}_{D'}\subset \overline{\mathcal{D}_D}$
    \end{example}
    \begin{remark}\label{rem:closing}
        \cref{theorem:closurerelation} is a special case of the conjecture made by Marcott \cite{marcott}. His conjecture deals with the case where the pipes at the crossing and uncrossing pair are not necessarily adjacent. In this case, when we replace the stones in cells $c$ and $c'$ with a $+$ to get $D$, we may not necessarily get a Go-diagram. Marcott deals with this case by introducing the notion of \textbf{corrective flips} to convert a diagram which is not a Go-diagram into a Go-diagram. Even though his conjecture is not true in general, we believe that it can be fixed by adding additional constraints. So we give a conjecture in the special setting where we don't need corrective flips to fix $D$:
        \begin{conjecture}\label{conjecture1}
         Let $D'$ be a Go-diagram in some partition $\lambda$, and let $c\prec c'$ be a crossing and uncrossing pair in $D'$ with $D'(c) = \circ$.
         Let $D$ be the diagram obtained by replacing the stones at $c$ and $c'$ with $+$. Let $i<j$ be labels of the pipes crossing at $c$ in $D'$.
        \begin{enumerate}
        \setlength\itemsep{0.01em}
            \item $D$ is a Go-diagram after recolouring stones appropriately, i.e., $D$ does not have any cells with the forbidden configuration.
            \item $i$ and $j$ don't form a crossing and uncrossing pair with any $k$, $i<k<j$ between $c$ and $c'$ in $D'$.
        \end{enumerate}
        
        Then $\mathcal{D}_{D'} \subset \overline{\mathcal{D}_D}$.
    \end{conjecture}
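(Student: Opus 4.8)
The plan is to reduce the general statement to the already-established special case \cref{theorem:closurerelation} (where $\pi(D')=1$) by a padding-and-truncation argument; this is exactly what the two auxiliary lemmas \cref{pad} and \cref{trunc} were designed to supply. The linchpin is that $\pi(D)=\pi(D')$: replacing a crossing--uncrossing pair of the pipes $i,i+1$ by two elbows changes neither the connectivity of the strands $i,i+1$ (on these two strands a crossing followed by an uncrossing is the identity, and so is a pair of elbows) nor that of any other strand, so both pipe dreams induce the same permutation $v$.

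First I would pad. By \cref{pad} there is a sequence of left-column/top-row paddings, determined \emph{only} by $v=\pi(D)=\pi(D')$, producing Go-diagrams $pad(D),pad(D')$ inside a common partition $\mu=pad(\lambda)$ with $\pi(pad(D))=\pi(pad(D'))=1$. Because the padding sequence and its forced black stones depend only on $v$, the two padded diagrams are filled identically on every new cell and agree with $D,D'$ on the old cells; hence $pad(D)$ and $pad(D')$ differ precisely at (the images of) $c$ and $c'$, just as $D$ and $D'$ did. The tiles at $c,c'$ are untouched by padding, so $c\prec c'$ is still a crossing--uncrossing pair in $pad(D')$ with $pad(D')(c)=\circ$, and $pad(D)$ is still obtained from it by turning $c,c'$ into $+$. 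Thus, provided the two pipes meeting at $c$ are still consecutively labelled, $pad(D')$ and $pad(D)$ satisfy every hypothesis of \cref{theorem:closurerelation}, which yields $\mathcal{D}_{pad(D')}\subset\overline{\mathcal{D}_{pad(D)}}$.

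The one point needing care --- and the main obstacle --- is checking that padding keeps the labels of the crossing pipes consecutive, since \cref{theorem:closurerelation} genuinely requires pipes $i,i+1$. Here the labelling convention rescues us: a top-row padding adds its new boundary edge at the eastmost end of the south-east border, so it introduces the new label as $1$ and shifts every old label up by one, whereas a left-column padding adds its edge at the far south-west end, introducing the new label as the new maximum and leaving all old labels fixed (the corner case padding both a row and a column is the composition of these two moves). In every case the two strands crossing at $c$ are the same physical strands throughout, and their labels are either both unchanged or both shifted by the same amount; no new label is ever inserted strictly between them. Iterating over the padding sequence, the pair stays consecutive, so it is labelled $i^\ast,i^\ast+1$ in $pad(D')$ and \cref{theorem:closurerelation} applies. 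The same label-shift invariance shows that $D$ is a Go-diagram: a forbidden configuration $B$ in $D$ would involve some pipe $k\neq i,i+1$ and would therefore already occur in $D'$, which is impossible.

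Finally I would strip the padding. Removing the added columns and rows one at a time and applying \cref{trunc} at each step --- legitimate since each intermediate pair lives in a common partition and a full-row/column truncation of a Go-diagram is again a Go-diagram --- transports $\mathcal{D}_{pad(D')}\subset\overline{\mathcal{D}_{pad(D)}}$ back down to $\mathcal{D}_{D'}\subset\overline{\mathcal{D}_D}$, which is the claim. I expect essentially all the difficulty to sit in the bookkeeping of the preceding paragraph (label consecutiveness and the exact effect of each padding on the boundary numbering); the reduction itself and the truncation are then routine.
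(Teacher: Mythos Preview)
The statement you were asked to address is \emph{Conjecture}~\ref{conjecture1}, which the paper explicitly leaves open; it is not proved anywhere in the paper. What the paper does prove is the special case $j=i+1$ (Theorem~\ref{thm:closemain}), and your write-up is, almost verbatim, a proof of that special case: you assume from the first paragraph on that the crossing pipes are $i,i+1$, and your padding argument is exactly the paper's deduction of Theorem~\ref{thm:closemain} from Theorem~\ref{theorem:closurerelation} via Lemmas~\ref{pad} and~\ref{trunc}. So as a proof of Theorem~\ref{thm:closemain} your proposal is correct and matches the paper; as a proof of Conjecture~\ref{conjecture1} it has a genuine gap.

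The gap is that nothing in your argument uses hypothesis~(2), and nothing allows $j>i+1$. Condition~(2) in the conjecture does \emph{not} force $j=i+1$: it only says that no $k$ with $i<k<j$ forms a crossing--uncrossing pair with $i$ or $j$ between $c$ and $c'$, which is perfectly compatible with such a $k$ existing (e.g.\ $k$ may cross $i$ once but not uncross, or may not touch $i$ or $j$ at all in that window). Your padding argument correctly preserves the \emph{gap} $j-i$ between the two labels, so if the pipes are non-consecutive in $D'$ they stay non-consecutive in $pad(D')$, and then Theorem~\ref{theorem:closurerelation} (which genuinely requires adjacent pipes) does not apply. The heart of the conjecture is precisely handling this non-adjacent case, which is why the paper states it as open; a proof would have to either extend the $D'$-distortion machinery of Section~\ref{sec:closure} beyond adjacent transpositions or find a new mechanism that exploits condition~(2).
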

     In the case where we don't need to fix $D$ using corrective flips, Marcott's conjecture is the same as above, except he doesn't have the second condition. Even though we know that the first condition is not sufficient, it might be possible to replace the second condition with a weaker condition. We also conjecture that the second condition is necessary as well.
        \begin{conjecture}\label{conjecture2}
         Let $D'$ be a Go-diagram in some partition $\lambda$, and let $c\prec c'$ be a crossing and uncrossing pair in $D'$ with $D'(c) = \circ$.
         Let $D$ be the diagram obtained by replacing the stones at $c$ and $c'$ with $+$. Let $i<j$ be labels of the pipes crossing at $c$ in $D'$.
        \begin{enumerate}
        \setlength\itemsep{0.01em}
            \item $D$ is a Go-diagram after recolouring stones appropriately, i.e., $D$ does not have any cells with the forbidden configuration.
            \item There is $k$ with $i<k<j$ such that $k$ forms a crossing-uncrossing pair with $i$ or $j$ between $c$ and $c'$ in $D'$.
        \end{enumerate}
        
        Then $\mathcal{D}_{D'} \not\subset \overline{\mathcal{D}_D}$.
    \end{conjecture}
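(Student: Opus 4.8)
The plan is to prove the non-containment by exhibiting a Zariski-closed condition that holds on all of $\overline{\mathcal{D}_D}$ but fails at a generic point of $\mathcal{D}_{D'}$. The cleanest such condition is the identical vanishing of a single Pl\"ucker coordinate: if some $I\in\binom{[n]}{k}$ has $\Delta_I\equiv 0$ on $\mathcal{D}_D$ while $\Delta_I\not\equiv 0$ on $\mathcal{D}_{D'}$, then, since $\{\Delta_I=0\}$ is closed, we are done. By the restricted-diagram expansion $\Delta_I(R_D)=\sum_{E\in\mathfrak{D}(D),\,I_E=I}\mathrm{sign}(\pi_E)\mathrm{wt}(E)$, this reduces to a combinatorial question about the toggle graphs $\mathfrak{D}(D)$ and $\mathfrak{D}(D')$: I would first try to produce a subset $I$ realized as $I_E$ for some restricted diagram of $D'$ but by no restricted diagram of $D$, choosing $I$ whose fibre in $\mathfrak{D}(D')$ is a single diagram so that no accidental cancellation can occur.

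However, passing from $D'$ to $D$ only converts the white stone at $c$ and the black stone at $c'$ into $+$'s, turning $c$ from an untogglable into a togglable cell, so one expects $\mathcal{M}_{D'}\subseteq\mathcal{M}_D$; matroids alone will then usually fail to separate the two components, which is exactly the phenomenon making the Deodhar decomposition not a stratification. When the matroids coincide I would instead look for a binomial (quadratic) relation $\Delta_{I_1}\Delta_{I_2}=\Delta_{J_1}\Delta_{J_2}$ among Pl\"ucker coordinates that holds identically on $\mathcal{D}_D$ but is violated generically on $\mathcal{D}_{D'}$; any such relation is again a closed condition and suffices for non-containment.

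The source of this relation should be the breakdown of the distortion argument of \cref{theorem:closurerelation}, in which the crossing pipes are adjacent ($i,i+1$) so that no intermediate pipe exists and \cref{blackstonelemma} supplies a black-stone cooling site for every excited factor produced while pushing $W_{(i+1,i)}(\gamma_c)$ to the left. Under \cref{conjecture2} the crossing pipes $i<j$ are non-adjacent, and hypothesis (2) furnishes a $k$ with $i<k<j$ forming a crossing--uncrossing pair with $i$ or $j$ strictly between $c$ and $c'$; this is precisely the configuration in which the cooling step is obstructed, since the excited factor created when the distortion passes the $k$-crossing would need a black stone of type $(k,i)$ or $(j,k)$ that is either absent or already consumed as the uncrossing of the $k$-pair. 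I expect this residual, uncoolable factor to force, in the limit $\gamma_c\to\infty$, a fixed proportionality among the Pl\"ucker coordinates whose leading $\gamma_c$-order is controlled by the triple $\{i,k,j\}$ (read off from \cref{lem:distort} and \cref{lemma:newcellparameters}), yielding the binomial relation; since on $\mathcal{D}_{D'}$ those same coordinates are free polynomials in the $\beta_b$ that do not satisfy it, the non-containment would follow.

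The hardest part will be making the step ``uncoolable factor implies a Pl\"ucker relation'' both precise and robust. Precision requires extracting, from the $D'$-distortion, the exact $\gamma_c$-valuation of each relevant coordinate and identifying which coordinates the surplus $(k,i)$ or $(j,k)$ factor couples; robustness requires ruling out \emph{all} limiting sequences, not merely the one-parameter distortion family, i.e. showing the relation lies in the ideal of $\overline{\mathcal{D}_D}$ rather than holding only along a chosen curve, for which a valuation/tropical analysis of the image of the full parametrization $\{\gamma_b\}$ is the natural tool. A secondary difficulty is that, unlike the containment \cref{thm:closemain}, the truncation/padding reduction of \cref{trunc} and \cref{pad} runs the wrong way here: the contrapositive of \cref{trunc} reduces the truncated case to the padded one, not conversely, so the general-permutation case cannot be deduced from the $\pi(D')=1$ case and the separating relation must be exhibited directly for arbitrary $\pi(D')$.
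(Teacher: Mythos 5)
The first thing to say is that \cref{conjecture2} is exactly that in the paper --- a conjecture. The authors state it without proof (see \cref{rem:closing}), so there is no proof of record to compare yours against, and your proposal must be judged as a self-contained argument. Judged that way, it is a sensible research plan whose preparatory reductions are sound: by \cref{thm:resdiagram} and the expansion of $\Delta_I(R_D)$ over restricted diagrams, distinct $E\in\mathfrak{D}(D)$ have distinct sets of toggled cells and hence distinct squarefree weight monomials, so over an infinite field $\Delta_I\equiv 0$ on $\mathcal{D}_D$ if and only if no $E\in\mathfrak{D}(D)$ has $I_E=I$; your first route is therefore genuinely combinatorial. You are also right on two structural points: matroid separation should typically fail (indeed the paper remarks that $\mathcal{D}_{D'}\cap\overline{\mathcal{D}_D}$ can be non-empty for these diagrams, so any separating function can only fail generically on $\mathcal{D}_{D'}$, which you correctly anticipate), and the machinery of \cref{trunc} and \cref{pad} transports non-containment from truncated diagrams to padded ones and not conversely, so the $\pi(D')=1$ case would not suffice here --- your conclusion on that point is correct even though your phrasing of the direction of the reduction is slightly garbled.

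The genuine gap is that the decisive step is never supplied: neither candidate separating invariant is actually exhibited. The implication you lean on --- that an uncoolable excited factor in the $D'$-distortion forces a binomial relation lying in the vanishing ideal of $\overline{\mathcal{D}_D}$ --- is a hope, not an argument. What the breakdown of the distortion procedure of \cref{theorem:closurerelation} shows, at most, is that one specific one-parameter family in $\mathcal{D}_D$ fails to converge to a given point of $\mathcal{D}_{D'}$; since $\overline{\mathcal{D}_D}$ is the closure of the image of the full parameter space $(\mathbb{F}^*)^a\times\mathbb{F}^b$, non-containment requires a function vanishing on all of $\mathcal{D}_D$ yet nonvanishing somewhere on $\mathcal{D}_{D'}$, and nothing in \cref{lem:distort} or \cref{lemma:newcellparameters} produces such a function (those lemmas are bookkeeping along a single curve of matrices). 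Your own closing paragraph concedes both difficulties --- extracting exact valuations, and proving ideal membership rather than failure along one curve --- but these are not technical residue; they are the entire content of the conjecture. Until the valuation-theoretic analysis you gesture at is carried out, or a concrete $I$ (or quadric) is produced and verified against the toggle graphs of both diagrams, the statement remains open, exactly as the paper leaves it.
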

    For the Go-diagrams $D'$ in \cref{conjecture2}, even though we don't have a closure relation, the intersection $\mathcal{D}_{D'} \cap \overline{\mathcal{D}_D}$ can be non-empty.
    \end{remark}

\section*{Acknowledgments}
    We want to thank Kevin Purbhoo and Olya Mandelshtam for their constant guidance and support throughout this project. We want to thank Cameron Marcott for his thesis, which provided the background for this work. Finally, we thank Oliver Pechenik, Allen Knutson and David Speyer for helpful discussions.
    
\printbibliography
\end{document}